\newcommand\eps{\varepsilon}
\newcommand\N{\mathbb{N}}
\newcommand\E{\mathbb{E}}
\renewcommand\P{\mathbb{P}}
\newcommand\fG{\textbf{\textup{G}}}
\theoremstyle{plain}
\newtheorem{theorem}{Theorem}[section]
\newtheorem{lemma}[theorem]{Lemma}
\newtheorem{proposition}[theorem]{Proposition}
\newtheorem{claim}[theorem]{Claim}
\newtheorem{corollary}[theorem]{Corollary}
\theoremstyle{definition}
\newtheorem{definition}[theorem]{Definition}
\newtheorem{fact}[theorem]{Fact}
\begin{document}

\begin{frontmatter}[classification=text]


\author[richard]{Richard Montgomery\thanks{Supported by the European Research Council (ERC) under the European Union Horizon 2020 research and
innovation programme (grant agreement No.\ 947978).}}
\author[alp]{Alp M\"uyesser}
\author[yani]{Yani Pehova\thanks{Supported by the European Research Council (ERC) under the European Union Horizon 2020 research and
innovation programme (grant agreement No.\ 648509).}}

\begin{abstract}
Given a collection of graphs $\fG=(G_1, \ldots, G_m)$ with the same vertex set, an $m$-edge graph $H\subset \cup_{i\in [m]}G_i$ is a transversal if there is a bijection $\phi:E(H)\to [m]$ such that $e\in E(G_{\phi(e)})$ for each $e\in E(H)$.
We give asymptotically-tight minimum degree conditions for a graph collection on an $n$-vertex set to have a transversal which is a copy of a graph $H$, when $H$ is an $n$-vertex graph which is an $F$-factor or a tree with maximum degree $o(n/\log n)$.
\end{abstract}
\end{frontmatter}

\section{Introduction}\label{sec:intro}
Many important problems in Extremal Graph Theory can be expressed as subgraph containment problems, asking what conditions on a graph $G$ guarantee that it contains a copy of another graph $H$. For example, Mantel's theorem from 1907 states that if $n\geq 3$, then any $n$-vertex graph with more than $n^2/4$ edges contains a triangle.
When $G$ and $H$ have the same number of vertices, it is natural to impose a minimum degree condition on $G$, as seen in Dirac's theorem from 1952. That is, every graph with $n\geq 3$ vertices and minimum degree at least $n/2$ contains a Hamilton cycle. Similarly, when $r|n$ and $H$ is formed of $n/r$ vertex-disjoint copies of $K_r$ (i.e., when $H$ is a $K_r$-factor),
Hajnal and Szemer\'{e}di \cite{hajnalszemeredi} showed that if $G$ has $n$ vertices and $\delta(G)\geq (1-1/r)n$, then $G$ contains a copy of $H$.

For these results on Hamilton cycles and $K_r$-factors, the minimum degree condition used is exactly best possible. When considering more general classes of graphs $H$, exact minimum degree conditions appear more difficult to obtain, and instead asymptotically best possible results have often been shown. For example, Koml\'os, S\'ark\"{o}zy, and Szemer\'edi \cite{kss} showed that, for each $\alpha>0$, there is some $c>0$ and $n_0\in \N$, such that any graph with $n\geq n_0$ vertices and minimum degree at least $(1/2+\alpha)n$ contains a copy of every $n$-vertex tree with maximum degree at most $cn/\log n$. This is tight up to the value of $c$ and $n_0$. For any $r$-vertex graph $F$, K\"uhn and Osthus determined the smallest value of $\delta$ such that, for each $\eps>0$, there is some $n_0\in \N $ such that any graph with $n\geq n_0$ vertices and minimum degree at least $(\delta+\eps)n$ contains an $F$-factor if $r|n$~\cite{kuhnosthus}. For more results concerning different graphs $H$, see the survey by K\"uhn and Osthus~\cite{kossurvey}.

In this paper, we will consider a generalisation of the subgraph containment problem to the study of transversals, where, roughly speaking, one element is selected from each of a collection of sets so that the set of selected elements (the transversal) has some desired property. This framework was introduced by Aharoni (see~\cite{rainbowMantel}), using the terminology of rainbow colouring. More precisely, we say that $\fG=(G_1,\ldots,G_m)$ is a \textit{graph collection on vertex set $V$} if, for each $i\in [m]$, the graph $G_i$ has vertex set $V$. We denote the size $m$ of the collection by $|\fG|$. Given $\fG=(G_1,\ldots,G_m)$, we say that an $m$-edge graph $H$ on $V$ is a \emph{$\fG$-transversal} if there is an injection $\phi\colon E(H)\to [m]$ such that $e\in G_{\phi(e)}$ for each $e\in E(H)$.

Aharoni, DeVos, de la Maza, Montejano and \v{S}\'{a}mal~\cite{rainbowMantel} showed that if $(G_1,G_2,G_3)$ is a graph collection on $[n]$ with $e(G_i)>(\frac{26-2\sqrt{7}}{81})n^2$ for each $i\in [3]$, then $\fG$ contains a transversal which is a triangle. Interestingly, as shown in~\cite{rainbowMantel}, the constant $\frac{26-2\sqrt{7}}{81}>1/4$ cannot be improved here, and therefore a stronger minimum size condition for the transversal triangle problem is required compared to Mantel's theorem. As mentioned in~\cite{rainbowMantel}, it remains an open problem to prove a comparable result for $K_r$, for values of $r$ above 3 (cf.\ Tur\'an's generalisation of Mantel's theorem).

Confirming a conjecture of Aharoni (see~\cite{rainbowMantel}) and improving on an asymptotically-tight result of Cheng, Wang, and Zhao~\cite{CWZ}, Joos and Kim \cite{jooskim} showed that, if $n\geq 3$, then any $n$-vertex graph collection $\fG=(G_1,\ldots,G_n)$ with $\delta(\fG)\geq n/2$ has a transversal which is an $n$-vertex cycle. By taking each graph in $\fG$ to be the same, we can recover Dirac's theorem, and hence this result is similarly tight and gives a transversal generalisation of Dirac's theorem. Joos and Kim \cite{jooskim} also showed the analogous result for a $\fG$-transversal which is a matching.

In this paper, we determine asymptotically tight minimum degree conditions on a graph collection $\fG$ which guarantee a $\fG$-transversal isomorphic to an $F$-factor, or any specific spanning tree without a very high maximum degree. That is, we give asymptotically tight transversal versions of the theorems of Hajnal and Szemer\'{e}di~\cite{hajnalszemeredi} and K\"uhn and Osthus~\cite{kuhnosthus} on factors, and a transversal generalisation of the theorem of Koml\'os, S\'ark\"{o}zy and Szemer\'edi on spanning trees \cite{kss}. Throughout, we aim to use general methods adaptable to finding transversals that are isomorphic to different graphs, using the results of the classical graph problem as a `black box' as much as possible. After giving our results below on  transversal spanning trees, and transversal factors, we finish this section with a discussion of transversals satisfying additional constraints.

\paragraph{Spanning trees.} Our first result is a direct transversal analogue of Koml\'os, S\'ark\"ozy and Szemer\'edi's theorem on spanning trees in dense graphs~\cite{kss}. That is, defining $\delta(\fG)$ to be the smallest minimum degree among the graphs in $\fG$, we prove the following.

\begin{theorem}\label{thm:rainbowkss}
For each $\alpha>0$, there exist $c>0$ and $n_0\in \N$ such that the following holds for all $n\geq n_0$.

Suppose $\fG$ is a graph collection on $[n]$ with $|\fG|=n-1$ and $\delta(\fG)\geq (1/2+\alpha)n$. If $T$ is an $n$-vertex tree with maximum degree at most $cn/\log n$, then there is a $\fG$-transversal which is isomorphic to $T$.
\end{theorem}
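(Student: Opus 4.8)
The plan is to use the absorption method, running the high-level strategy behind the classical theorem of Komlós, Sárközy and Szemerédi~\cite{kss} but carrying, throughout, a colour (an index in $[n-1]$) alongside every embedded edge. The starting point is the standard structural dichotomy for bounded-degree trees: for a suitable $\beta=\beta(\alpha)>0$, an $n$-vertex tree $T$ with $\Delta(T)\le cn/\log n$ either (i) has at least $\beta n$ leaves, or (ii) contains at least $\beta n/\Delta(T)$ vertex-disjoint \emph{bare paths} of some fixed length (paths whose internal vertices have degree $2$ in $T$). In either case $T$ contains a family $\mathcal{F}$ of $\Theta(n/\mathrm{polylog}\,n)$ vertex-disjoint \emph{flexible gadgets} — in case (i) a vertex of $T$ together with two leaf-children, in case (ii) a bare path — each of which will admit two embeddings (``modes'') differing in a controlled way.

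Next comes the absorbing structure. Reserve small random sets $R\subseteq[n]$ of host vertices and $D\subseteq[n-1]$ of colours, of comparable size; a routine concentration argument shows $R$ and $D$ inherit the relevant pseudorandomness (e.g.\ every vertex still has $\gg|R|$ neighbours in $R$ in each $G_i$). The main departure from the classical proof is that each flexible gadget must be prepared to absorb \emph{both} a leftover host vertex and a leftover colour simultaneously: after the bulk embedding, the set $W$ of unembedded host vertices and the set $C$ of unused colours will satisfy $|W|=|C|$ but will otherwise be essentially arbitrary. So I would show that, for host vertices in generic position and reserve colours drawn from $D$, each gadget has two embeddings differing in exactly one host vertex and one used colour, and then link the gadgets to the vertex/colour pairs of $R\cup D$ through a robustly matchable bipartite structure, exactly as in distributive absorption. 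This produces an absorbing subtree $T_{\mathrm{abs}}\subseteq T$ possessing a $\fG$-transversal embedding for every admissible choice of which vertices of $R$ and which colours of $D$ are actually consumed. Assembling the initial pool of generically-positioned gadgets uses only $\delta(\fG)\ge(1/2+\alpha)n$, via a counting/Hall argument applied colour by colour.

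The bulk of the work is the main embedding: embed $T\setminus V(T_{\mathrm{abs}})$ together with the ``off-mode'' parts of the gadgets into $[n]\setminus R$ as a $\fG$-transversal using only colours outside $D$, arranging that the host vertices left over are precisely $R$ minus what $T_{\mathrm{abs}}$ needs and the colours left over are precisely $D$ minus what $T_{\mathrm{abs}}$ needs; the absorber then completes the embedding, with a short bipartite-matching step to pin down any colours we deliberately left unassigned on $T_{\mathrm{abs}}$. For the embedding itself I would rerun the layered KSS procedure: order $V(T)$ by breadth-first search from a root and embed vertices one at a time, placing each new vertex $v$ at a currently-unused host vertex $w$ joined in some currently-unused colour $i$ to the image of the parent of $v$, all the while maintaining the KSS invariant that only $o(n)$ already-embedded vertices are ``dangerous''. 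Crucially, the colour coordinate only widens the set of admissible moves: at each step there are $\gg|D|$ unused colours, so since $\delta(\fG)\ge(1/2+\alpha)n$ the number of valid pairs $(w,i)$ is, up to constants, the same linear-in-$n$ quantity that governs the uncoloured argument, and the KSS counting (which is where the $\log n$ in $\Delta(T)\le cn/\log n$ enters, through a union bound over the BFS layers) goes through with the added requirements of avoiding $R$ and $D$ and steering the surplus onto the reservoir.

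The hard part, I expect, is precisely this interface. In easier regimes one can black-box~\cite{kss}, but here the notion of ``dangerous'' must be tracked per colour rather than globally — a host vertex can be safe in $G_1$ and nearly exhausted in $G_2$, yet we do not know in advance which colour we will want for its edges — and one must simultaneously ensure that the leftover lands exactly on the reserved vertices \emph{and} the reserved colours so that $T_{\mathrm{abs}}$ can actually fire. Case (i), the ``many leaves'' case, is noticeably gentler, since there the gadgets are leaves and the embedding of $T\setminus V(T_{\mathrm{abs}})$ can be obtained much more directly, so the genuinely delicate bookkeeping is concentrated in case (ii).
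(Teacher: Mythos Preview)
Your plan is a genuinely different route from the paper's, and it is substantially harder than necessary. The paper's key simplification is to \emph{decouple} vertex embedding from colour assignment, so that the uncoloured KSS theorem can be used as a black box and no vertex absorption is needed at all. Concretely: whenever a subtree $T'$ is to be embedded rainbow into a vertex set $V'$ with $|V'|=|T'|$ using colours from a set $D$ with $|D|\ge (1+\eta)|T'|$, the paper passes to the auxiliary graph $G$ on $V'$ whose edges are those lying in at least $|T'|$ of the graphs $G_i$, $i\in D$; a trivial averaging argument shows $\delta(G)\ge (1/2+\alpha/2)|V'|$, so uncoloured KSS (with one anchored vertex) gives an uncoloured spanning copy of $T'$ in $G$, which is then greedily rainbow-coloured. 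Iterating this over a constant-size partition of $T$ into subtrees handles Steps~2 and~4 of the outline. The only genuinely new ingredient is a \emph{colour-only} absorber: embed a small subtree $T_1$ uncoloured inside the auxiliary graph, then use a Mader--Menger argument on a bipartite graph between $E(T_1)$ and $[n-1]$ to find sets $A,C$ so that the image of $T_1$ can be rainbow-coloured using $A\cup B$ for \emph{any} $B\subset C$ of the right size. Because the vertices are handled by exact-size partitions and spanning embeddings into each part, no vertex reservoir and no leftover-vertex matching is required.

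By contrast, your proposal reopens the KSS embedding procedure and threads a colour through every step, which forces you to absorb vertices and colours simultaneously and to track ``dangerous'' host vertices per colour. You correctly flag this as the hard part, but you do not actually solve it: the sentence ``the number of valid pairs $(w,i)$ is, up to constants, the same linear-in-$n$ quantity'' hides the real issue, namely that safety of $w$ for \emph{future} embedding depends on which colours will later be incident to $w$, and there is no a priori bound on the number of vertices that are dangerous in \emph{some} colour. Nor do you construct the two-mode gadgets that absorb a vertex and a colour at once, which is not straightforward in the bare-path case. The paper's auxiliary-graph trick sidesteps all of this: once you know an edge lies in many $G_i$, the colour can be chosen after the fact, so the embedding step is purely uncoloured and the KSS bookkeeping is unchanged. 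If you want to salvage your outline, the first thing to import is exactly this idea --- it collapses your per-colour danger problem and lets you drop the vertex-absorption half of your gadget construction entirely.
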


Setting each graph $G_i$ in Theorem~\ref{thm:rainbowkss} to be the same graph $G$ with minimum degree at least $(1/2+\alpha)n$, recovers Koml\'os, S\'ark\"ozy, and Szemer\'edi's theorem; thus, Theorem~\ref{thm:rainbowkss} is similarly tight up to the value of $c$ and~$n_0$.

\paragraph{Transversal factors.}
For each graph $F$, let $\delta_F$ be the smallest real number $\delta\geq 0$ such that, for each $\eps>0$ there is some $n_0$ such that, for every $n\geq n_0$ with $|F|$ dividing $n$, if an $n$-vertex graph $H$ has minimum degree at least $(\delta+\eps)n$, then $H$ contains an $F$-factor.
As recalled above, for each $r\geq 3$, we have that $\delta_{K_r}=1-1/r$ by the result of Hajnal and Szemer\'{e}di~\cite{hajnalszemeredi}, while in general the value of $\delta_F$ was determined by K\"uhn and Osthus~\cite{kuhnosthus}. In most cases, $\delta_F$ is linked to the critical chromatic number, while we always have $1-1/(\chi(F)-1) < \delta_F\leq 1-1/\chi(F)$ (for more details see~\cite{kuhnosthus}).

We will show that, in most cases, the same minimum degree bound is asymptotically sufficient for the existence of $F$-factor transversals, as follows.
\begin{theorem}\label{thm:fullyrainbowfactors}
Let $\eps>0$ and let $F$ be a graph on $r$ vertices with $t$ edges. If $\delta_F\geq 1/2$ or $F$ has a bridge, then let $\delta^T_F=\delta_F$, and otherwise let $\delta^T_F=1/2$. Then, there is some $n_0$ such that the following holds for all $n\geq n_0$.

Suppose $\fG$ is a graph collection on $[rn]$ with $|\fG|=tn$ and $\delta(\fG)\geq (\delta^T_F+\eps)rn$. Then, $\fG$ contains a transversal which is an $F$-factor.
\end{theorem}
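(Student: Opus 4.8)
The plan is to build the transversal $F$-factor in two stages: an absorption stage and an almost-perfect tiling stage, mirroring the classical proof of the K\"uhn--Osthus factor theorem but carried out inside the transversal framework. Split the colour set $[tn]$ and the vertex set $[rn]$ appropriately: reserve a small random subset $A\subseteq [rn]$ of vertices together with a corresponding set of reserved colours to form an \emph{absorbing structure}, and use the remaining vertices and colours for an almost-perfect transversal $F$-tiling. Concretely, I would first prove a transversal \emph{absorbing lemma}: if $\fG$ has $\delta(\fG)\geq (\delta^T_F+\eps)rn$, then there is a small set $A$ of vertices and a set of colours of size roughly $|A|/r \cdot t$ such that for \emph{every} subset $W\subseteq A$ with $|W|$ divisible by $r$ and $|W|$ not too large, the vertex set $A$ together with $W$ spans a transversal $F$-factor (using only the reserved colours). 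The standard way to get this is to find, for many small vertex sets, a large family of ``absorbers'' — small gadgets each of which can tile either including or excluding a given small configuration — and then pick a random subfamily; the transversal twist is that each absorber must be rainbow and the whole collection must be rainbow simultaneously, which one handles by assigning disjoint colour-blocks to absorbers and invoking a defect version of Hall's theorem / a greedy colour-assignment argument.

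Second, I would prove a transversal \emph{almost-tiling lemma}: any graph collection on $N$ vertices with $|\fG| \geq tN/r - o(N)$ colours and $\delta(\fG)\geq (\delta^T_F+\eps')N$ contains a transversal $F$-tiling covering all but $o(N)$ vertices. The cleanest route is to use the classical K\"uhn--Osthus theorem as a black box on a single auxiliary graph: since every $G_i$ has the same large minimum degree, $G := \bigcap$ is too weak, but one can instead iterate — repeatedly pull out one disjoint copy of $F$ at a time using a fresh colour, applying the classical result on the ``majority graph'' or using a random partition of the colours into $n$ groups and finding one $F$-copy per group via a random-greedy / nibble argument. I expect it is cleaner to prove this via a direct nibble or a random partition of colours, finding a transversal perfect $F$-tiling on a random subset using that a $(\delta^T_F+\eps')$-minimum-degree condition survives restriction to a random vertex subset, and then cleaning up the uncovered vertices with the absorber. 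The case distinction in the statement ($\delta^T_F = 1/2$ when $\delta_F < 1/2$ and $F$ is bridgeless) is where extra care is needed: when $\delta_F < 1/2$ the classical factor exists below the connectivity/expansion threshold that the transversal argument wants, so one must separately handle why $1/2$ suffices — presumably because below $1/2$ one cannot even guarantee that each $G_i$ is connected, and the bridgeless hypothesis is exactly what lets the absorbers be built using $2$-edge-connected pieces, while a bridge in $F$ would force a genuinely higher threshold (this matches the ``$F$ has a bridge'' branch giving $\delta^T_F=\delta_F$).

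The main obstacle I anticipate is the transversal absorbing lemma: in the classical setting one chooses absorbers independently at random, but here the absorbers must be pairwise rainbow-compatible (use disjoint colours) \emph{and} each internally rainbow, so the random selection must be coupled with a valid colour assignment. I would handle this by a two-level argument — first show that for each target configuration there are $\Omega(n^{r})$ absorbers that are rainbow in \emph{many} different colour-sets (an ``absorber is flexible in colours'' count, obtained from the minimum-degree condition via a supersaturation/counting argument exactly as in the classical proof but tracking which colours appear), then select a random subfamily and assign colours greedily, using that each absorber has many legal colourings and few absorbers conflict, so a Lov\'asz-Local-Lemma or a simple union-bound-with-slack argument produces a globally consistent rainbow absorbing set. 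A secondary technical point is ensuring the leftover after almost-tiling is both small and divisible by $r$ and lands inside the absorber's guaranteed range; this is routine once the two lemmas are in place. Finally, one must check the degree and colour-count bookkeeping: after removing the absorbing structure and its colours, the remaining collection still has $\delta(\fG') \geq (\delta^T_F + \eps/2)|V'|$ and enough colours, which follows since $A$ and the reserved colour set are chosen to have size $o(n)$.
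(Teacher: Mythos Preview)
Your proposal has a genuine gap: it treats this as a vertex-absorption problem, when the real obstruction in the transversal setting is that every colour must be used exactly once. Your absorbing lemma promises that $A\cup W$ can be transversally tiled using the reserved colours, but the number of colours needed to tile $A\cup W$ is $t(|A|+|W|)/r$, which depends on $|W|$; meanwhile the almost-tiling on $[rn]\setminus A$ will leave not only a vertex remainder $W$ but also a colour remainder of size $t|W|/r$ among the \emph{non}-reserved colours. So the absorber would have to tile $A\cup W$ using exactly the reserved colours together with an arbitrary leftover colour set of the right size --- a simultaneous vertex-and-colour absorption that your sketch does not provide and that the standard random-absorber argument does not give.

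The paper sidesteps this entirely by turning the problem around. It never absorbs vertices: the vertex set is partitioned in advance into pieces $V_1,\ldots,V_4$ of fixed sizes (via the partition lemmas), and each piece is tiled perfectly by $F$. All the flexibility is on the \emph{colour} side. For the absorber one finds $n_1$ uncoloured copies of $F$ on $V_1$, each lying inside $G_j$ for $\Omega(n)$ indices $j$; an auxiliary bipartite graph (copies versus colours) then has high degree on the small side, and a Mader--Menger switching argument (Lemma~3.3) produces sets $A,C$ of colours so that these fixed copies can be rainbow-coloured with $A$ together with \emph{any} $|C|$-subset of the reservoir. Steps~2--4 then tile $V_2,V_3,V_4$ exactly while exhausting all colours outside $A\cup C$ and some of $C$, and Step~5 colours $V_1$ with whatever is left. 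No almost-tiling or vertex leftover ever appears.

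Separately, your reading of the bridge condition is inverted. A bridge in $F$ is the \emph{favourable} case: it is precisely what allows one to place a prescribed colour on a single edge of a copy of $F$ even when $\delta_F<1/2$ and the host graphs may be disconnected (this is Case~2 of Lemma~5.7, where the bridge lets you glue two pieces across a specified $G_1$-edge). When $F$ is bridgeless and $\delta_F<1/2$, a single $G_i$ which is complete bipartite cannot contribute any edge to an $F$-copy otherwise confined to one side, which is why the threshold must rise to $1/2$; it is not an absorber-construction issue.
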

As the classical graph problem can be deduced by again taking $\fG$ to be copies of the same graph, it follows from the minimality of $\delta_F$ that to show Theorem~\ref{thm:fullyrainbowfactors} is asymptotically tight, we need only show that if $F$ has no bridge, then the threshold $\delta^T_F$ for the existence of a transversal $F$-factor is at least $1/2$. To see this, suppose $F$ has no bridge, and take two disjoint sets $A$ and $B$ with size $\lfloor rn/2\rfloor$ and $\lceil rn/2\rceil$ respectively. For each $i\in [tn-1]$, let $G_i$ be the union of a clique on $A$ and a clique on $B$. Let $G_{tn}$ be the complete bipartite graph with bipartition $(A,B)$, and let $\fG=(G_1,\ldots,G_{tn})$. If $\fG$ contains a transversal which is an $F$-factor, then the copy of $F$ with an edge in $G_{tn}$ can have only this edge between the sets $A$ and $B$, and thus $F$ has a bridge, a contradiction. Therefore, $\fG$ contains no $F$-factor transversal. As $\delta(G)=\lfloor rn/2\rfloor-1$, it holds that $\delta^T_F\geq 1/2$.

Our methods to prove Theorem~\ref{thm:fullyrainbowfactors} allow some control over which edges in the $F$-factor appear in which graphs in $\fG$. Namely, using the same framework, we also prove the following natural variation of Theorem~\ref{thm:fullyrainbowfactors}.

\begin{theorem}\label{thm:monocopies} Let $\eps>0$ and let $F$ be an $r$-vertex graph. Then, there is some $n_0$ such that the following holds for all $n\geq n_0$.

Suppose $\fG$ is a graph collection on $[rn]$ with $|\fG|=n$ and  $\delta(\fG)\geq (\delta_F+\eps)rn$. Then, there are vertex-disjoint $F$-copies $F_i\subset G_i$, $i\in [n]$.
\end{theorem}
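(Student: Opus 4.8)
The plan is to deduce this from the absorption-type machinery that already underlies the proof of Theorem~\ref{thm:fullyrainbowfactors}. The key observation is that Theorem~\ref{thm:monocopies} is a \emph{relaxation}: instead of asking for a transversal (where each of the $tn$ edges of the $F$-factor must come from a distinct graph in $\fG$), we only ask that the $n$ copies of $F$ are ``monochromatic'' in the sense that all $t$ edges of $F_i$ lie in a single graph $G_i$, and the colours $i$ are distinct across copies. In particular, this version does \emph{not} require the extra $\eps$ slack that pushes $\delta^T_F$ up to $1/2$ when $F$ is bridgeless: the relevant obstruction in the tightness example for Theorem~\ref{thm:fullyrainbowfactors} (that the single cross-edge in $G_{tn}$ forces a bridge) disappears, because now a single copy $F_i$ can live entirely inside $G_{tn}$, i.e.\ entirely across the bipartition. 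So the degree threshold should revert to the classical $\delta_F$.

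First I would set up the standard framework: apply the regularity lemma, or rather work with the absorption method directly, partitioning the vertex set of $[rn]$ into an ``absorbing'' structure and a remainder to be covered greedily. The crucial point is that the entire argument can be run per-colour: for each $i\in[n]$ we must place one copy of $F$ using only edges of $G_i$, and we must place these $n$ copies vertex-disjointly. Since $\delta(G_i)\ge (\delta_F+\eps)rn$ for each $i$, each $G_i$ individually contains many copies of $F$ on any large vertex subset (by the defining property of $\delta_F$, or rather its robust/counting version), and in fact $G_i$ contains an $F$-factor on essentially any linear-sized balanced vertex set. The task is to choose these copies so their vertex sets are disjoint. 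I would build an auxiliary hypergraph (or use a random greedy / nibble-type argument) where we sequentially assign to colour $i$ a copy $F_i\subset G_i$ avoiding the at most $r(i-1) = O(rn)$ previously-used vertices; as long as we maintain that the set of unused vertices is still ``nice'' with respect to $G_i$ (balanced if $F$ is bipartite-type, large enough, not too depleted in any part), such a copy exists. To make this work cleanly for all $n$ colours at once, the right tool is an absorber: first reserve a random sparse absorbing set $A$ of copies-slots that can ``swallow'' any small leftover, then cover the bulk $[rn]\setminus V(A)$ greedily colour by colour, then finish using the absorber on the remaining few vertices and colours.

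The main obstacle I expect is handling the divisibility/parity and part-balance conditions uniformly across colours during the greedy phase. For a general $r$-vertex $F$, the natural host-graph decomposition used to embed an $F$-factor (the one achieving $\delta_F$, which K\"uhn--Osthus describe via the critical chromatic number) is unbalanced, so when we remove the vertex set of $F_i$ at each step we must ensure the deletions do not accumulate to skew the remaining set out of the feasible region for some later colour $G_j$. The standard fix is to make the greedy choices themselves ``balanced on average'': either randomise the choice of copy so that, in expectation, each part of the canonical partition is depleted proportionally, and then apply a concentration inequality; or, preferably, defer to an absorbing set designed to correct exactly this kind of drift. A second, more minor technical point is that we need a \emph{robust} version of ``$\delta(G)\ge(\delta_F+\eps)n \Rightarrow G$ has an $F$-factor'' — specifically that this holds on almost-spanning balanced subsets and with room to avoid a small adversarial vertex set — but this follows from the $\eps$-slack together with a standard supersaturation argument, so I would quote it rather than reprove it. Once these points are in place, the deduction is routine: reserve absorber, greedily assign colours $1,\dots,n$ to disjoint $F$-copies in the respective $G_i$, absorb the remainder.
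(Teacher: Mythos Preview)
Your high-level instinct---absorption plus a covering phase---matches the paper, but the covering phase you sketch has a genuine gap. You propose to ``cover the bulk $[rn]\setminus V(A)$ greedily colour by colour'', placing $F_i\subset G_i$ into the currently unused vertices. The difficulty is not balance or drift across parts of a canonical partition; it is simply that after you have placed $i$ copies, the unused set has size $rn-ri$, and the minimum degree of $G_{i+1}$ on that set is only guaranteed to be $(\delta_F+\eps)rn-ri$. Once $ri$ exceeds roughly $\eps rn$ (or, with more care, a fixed $F$-dependent fraction of $rn$), this is no longer $(\delta_F+\eps')(rn-ri)$ and you lose even the Erd\H os--Stone guarantee of a single copy. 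So the greedy does not get anywhere near a $(1-\mu)$-fraction of the colours, and a small absorber cannot swallow the rest. Randomising the copy choice or monitoring balance does not address this, because the problem is absolute degree loss, not imbalance.

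The paper sidesteps this entirely by never embedding colour-by-colour. Instead (Lemma~\ref{lem:partition} and Lemma~\ref{lem:to-iterate-fully-rainbow}) it partitions $[rn]$ into pieces $V_1,\dots,V_\ell$ each of \emph{constant} size $rk$, chosen so that all but a negligible fraction of the colours $j$ satisfy $\delta(G_j[V_i])\ge(\delta_F+\eps/2)|V_i|$. Then, for each piece $V_i$ in turn, every such good colour $j$ has an $F$-factor inside $G_j[V_i]$; since there are at most $(rk)!$ distinct $F$-factors on $V_i$ but linearly many unused good colours, pigeonhole gives a single $F$-factor of $V_i$ that sits inside $G_j$ for at least $|V_i|/r$ unused colours $j$, and you colour its copies with those. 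This covers all vertices whenever there are $(1+\eta)n$ colours. Finally, the absorber is a \emph{colour} absorber (Lemma~\ref{lem:absorb} applied to the auxiliary bipartite graph of Claim~\ref{clm:onecopy}): a small family of uncoloured copies of $F$, each lying in many $G_j$, which can be rainbow-coloured with $A\cup B$ for any suitable leftover set $B$. This is what lets you drop from $(1+\eta)n$ colours to exactly $n$. Your proposal conflates this with a vertex absorber; the paper needs no vertex absorption at all, because the partition-and-pigeonhole step already tiles the vertex set exactly.
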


When $F$ is a clique, the above result has connections to the study of cooperative colourings, a variation on graph colourings introduced in \cite{coopcolour} (see also \cite{othercoopcolour}). Given a graph collection $(G_1,\ldots,G_m)$ on $V$, we say that the collection can be cooperatively coloured if there exists a choice of independent sets $I_i\subseteq G_i$ ($i\in [m]$) such that $\bigcup_{i\in[m]}I_i=V$. By taking complements, the $F=K_r$ case of Theorem~\ref{thm:monocopies} could be viewed as giving a sufficient condition for a graph collection to be \textit{equitably} cooperatively coloured, that is, via independent sets of the same size.

\paragraph{Stronger conditions on transversals.} In the problems we have considered, we have typically sought a copy of a graph $H$ in a graph collection where each edge of the copy of $H$ is provided by a different graph in the collection. We could also specify which edge of $H$ should have its copy provided by which graph. We will sketch how our proofs can be modified to show that this is possible in Theorem~\ref{thm:fullyrainbowfactors} if $F$ is a clique (see Section~\ref{sec:withspecifiedcolouring}), but also that in general the minimum degree condition may need to be raised for this to be possible (see Proposition~\ref{prop:noeasygeneralisation}).

\paragraph{Other transversals and our techniques.} Throughout the paper, we use as general techniques as possible to move from embedding graphs using minimum degree conditions to embedding graphs as transversals using minimum degree conditions. These techniques are described in Section~\ref{sketch}. The methods we use form a good starting framework for finding transversals isomorphic to other graphs. For example, due to Koml\'os, S\'ark\"ozy and Szemer\'edi it is known that, for each $k\geq 1$, there is some $n_0$ such that any graph with $n\geq n_0$ vertices and minimum degree at least $kn/(k+1)$ contains the $k$th power of a Hamilton cycle~\cite{ksspower}. It seems likely that the $k$th power of a Hamilton cycle can be found as a transversal in any $n$-vertex graph collection with minimum degree $(k/(k+1)+\alpha)n$, if $n$ is large relative to $k$ and $1/\alpha$, and that this could be shown by adapting our techniques and proving slightly strengthened versions of the non-transversal embedding problem. To see what kind of modifications to the non-transversal embedding problem is required by our approach, see Section~\ref{sec:fixedvertex}.

\medskip

\paragraph{Recent related results.} During the final preparation of this manuscript, Cheng, Han, Wang, and Wang released a manuscript \cite{chenghan} proving Theorem~\ref{thm:fullyrainbowfactors} in the special case that $F$ is a clique, along with analogous results for hypergraph transversals.

\bigskip

In the next section, after covering our notation, we discuss our proofs before giving an outline of the rest of the paper.


\section{Proof sketch and preliminaries}
For convenience, we will use colour terminology for the rest of the paper. Given a graph collection $\fG=(G_1,\ldots,G_m)$, we consider each graph $G_i$ to have a colour $i$. Given a subgraph $H\subset \cup_{i\in [m]}G_i$, we allow each edge $e\in E(H)$ to be assigned the colour $i$ if $e\in E(G_i)$. Then, $H$ is a transversal if each edge can be assigned a different colour. As often in the area of edge colourings, we call graphs with a different colour for each edge \emph{rainbow}. When we say $H\subset \cup_{i\in [m]}G_i$ is \textit{uncoloured}, we are simply highlighting that we have not yet assigned a colouring to $H$.

\subsection{Notation}\label{notation}
We use standard graph theory notation throughout, but describe here some of the more common notation we use. We denote by $[n]$ the set of integers $\{1,...,n\}$, and by $\binom{[n]}{k}$ the collection of subsets of $[n]$ of size $k$.

A graph $G$ has $e(G)$ edges and $|G|$ vertices, while, given any set $V\subset V(G)$ and a vertex $v\in V(G)$, the number of neighbours of $v$ in $V$ is denoted by $d(v,V)$ .
We often describe finding a copy, $S$ say, of a graph $T$ in a graph $G$. When we do so we implicitly assume that we have found an embedding $\phi:T\to G$ with image $S$. Thus, when we say that $s$ is the copy of a vertex $t\in V(T)$ in $S$, we mean that $s=\phi(t)$ without referencing or labelling the function~$\phi$.

Recall that a graph collection $\fG=(G_1,\ldots,G_m)$ is a collection of (not necessarily distinct) graphs $G_i$, $i\in [m]$, which all have the same vertex set, and $\delta(\fG)=\min_{i\in [m]}\delta(G_i)$. Given a graph collection $\fG=(G_1,\ldots,G_m)$ with vertex set $V$, and  a set $U\subset V$, the collection of graphs $G_i[U]$, $i\in [m]$ induced on the vertex set $U$ is denoted by $\fG[U]$. We set $|\fG|$ to denote the size of $\fG=(G_1,\ldots,G_m)$, so that, in this case, $|\fG|=m$.

We use standard hierachical notation for constants, writing $x\ll y$ to mean that there is a fixed positive non-decreasing function on $(0,1]$ such that the subsequent statements hold for $x\leq f(y)$. Where multiple constants appear in a hierarchy, they are chosen from right to left. We omit rounding signs where they are not crucial.

\subsection{Proof sketch}\label{sketch}
We first sketch how we embed spanning trees before discussing the adaptations we make to embed factors. We start by discussing two key techniques, and then give an overview of the structure of the proof.

\paragraph{Rainbow subgraphs using surplus colours.} In the proof of Theorem~\ref{thm:rainbowkss}, we embed the $n$-vertex tree $T$, one small subtree at a time, into randomly sampled vertex subsets of the appropriate size. More precisely, suppose we are trying to embed in a rainbow fashion a subtree $T'\subseteq T$ on a vertex subset $S\subseteq[n]$ where $S$ is sampled randomly amongst $|T'|$-sized subsets. As the graph collection $\fG[S]$ will have $\delta(\fG)\geq (1/2+\alpha/2)|S|$ with high probability, this problem is quite similar to the original problem of embedding $T$ in $\fG$. The key difference is that, assuming $T'$ is small, $|\fG[S]|$ is much larger than we need it to be to find a rainbow embedding of $T'$ on $S$. For example, if $|T'|=o(n)$, the size of $\fG[S]$ is $n-1$, so that $\fG$ has $C$ times more graphs than we need to find a rainbow embedding, where $C$ is a large constant. Hence, the problem of embedding $T'$ is equivalent to proving a version of Theorem~\ref{thm:rainbowkss} where the graph collection has size $Cn$, as opposed to $n-1$.
\par To illustrate how much simpler this problem is in comparison, suppose $1/n\ll 1/C\ll \alpha$, $m= Cn$ and let $\fG=(G_1,\ldots,G_m)$ be a graph collection on $[n]$ with $\delta(\fG)\geq (1/2+\alpha) n$. We let $G$ be the graph with vertex set $[n]$ formed by the edges which appear in at least $2n$ graphs $G_i$. As $m$ is much larger than $2n$, it follows that $G$ has almost as good a minimum degree condition as $\fG$ (see Proposition~\ref{prop:alsodirac}), so that we will have $\delta(G)\geq (1/2+\alpha/2)n$. If we can use an uncoloured embedding result to find a copy of an $n$-vertex tree $T$ in $G$, then we can greedily colour its edges using different colours as each edge has at least $2n$ possible colours in $\fG$.
\par By embedding $T$ as a sequence of subtrees, deleting colours used on each embedded rainbow subtree as we go along, we can iteratively apply the above argument to prove a version of Theorem~\ref{thm:rainbowkss} where the graph collection has size $(1+o(1))n$ as opposed to $Cn$ (see Section~\ref{surpluscolours}).

\paragraph{Colour absorption.} To move from a version of Theorem~\ref{thm:rainbowkss} with $(1+o(1))n$ graphs in $\fG$ to the actual statement of Theorem~\ref{thm:rainbowkss} , we use \textit{absorption}, as first codified by R\"odl, Ruci\'nski and Szemer\'edi~\cite{RRSab}. Suppose first that in $\fG$ with $|\fG|=n-1$ we could embed a small subtree of $T$, say $S$, such that given any $e(S)$-sized subset $\tilde{C}$ of $[n-1]$, we can colour the (uncoloured) image of $S$ using exactly the colours in $\tilde{C}$. Here, we will have $e(S)\ll n-1$. Leaving the image of $S$ uncoloured, we then extend the embedding to $T$ while giving each newly embedded edge a different colour, relying on the $e(S)$ surplus colours we have to do this.  Finally, we take the $e(S)$ unused colours and use them to give a rainbow colouring to $S$. In the language of absorption we say the embedding of $S$ \textit{absorbs} these unused colours.

\par Unfortunately, our embedding of $S$ will not be able to absorb \emph{any} set of $e(S)$ colours. Instead, we find disjoint subsets $C, A\subseteq [n-1]$, such that the embedding of $S$ has a rainbow colouring using exactly the colours in $\tilde{C}\cup A$, for any set $\tilde{C}\subset C$ of $e(S)-|A|$ colours (see \ref{step-1} in the proof overview below). Thus, $C$ functions as a \textit{reservoir} of colours to assist with extending the embedding of $S$ to one of $T$ while colouring the newly embedded edges with different colours in $[n-1]\setminus A$. Here, we will have that $e(S),|A|\ll |C|\ll n-1$.
\par The only additional complication here is that we need to ensure we use every colour outside of $C\cup A$ at some earlier stage in the embedding (see \ref{step-2} and \ref{step-3} below), so that any remaining colour is absorbable by the embedding of $S$.
\par We now discuss how we embed $S$ (and find sets $A,C$) with this absorber-reservoir property. Let $0<1/n\ll \eps\ll \alpha$. Using an auxiliary graph similar to $G$ above, we can embed $S$, an $\eps n$-edge subtree of $T$, into $\fG=(G_1,\ldots,G_{n-1})$, so that each embedded edge is in at least $2\eps n$ graphs in $\fG$. Edge-by-edge, we randomly give each edge of $S$ one of its possible colours, ensuring that this results in a rainbow colouring. Letting $\phi$ be the embedding of $S$, note that if an edge $e$ of $\phi(S)$ has colour $i$, then we can alter the colouring of $\phi(S)$ by recolouring $e$ with any unused colour $j$ with $e\in E(G_j)$. This gives a rainbow colouring of $\phi(S)$ in which the colour $i$ has been switched for $j$, and can be done for at least $\eps n$ values of $j$ for each colour $i$ appearing on the image of $S$.

\par We can now combine many switchings of the above form to build \textit{chains} of switchings with which  we can build the desired reservoir property. Consider an auxiliary digraph $K$ with vertex set $[n-1]$ where $\vec{ij}$ is an edge whenever there is an edge with colour $i$ in $\phi(S)$ which appears in $E(G_j)$, noting that this graph will have $\Omega_\eps(n^2)$ edges. Let $0<\mu\ll \eps$. Suppose we could find a large set $C$ of colours not on $\phi(S)$ and a set $B$ of $\ell:=\mu n$ colours on $\phi(S)$ such that, given any set $\tilde{C}\subset C$ of $\ell$ colours, there are $\ell$ vertex-disjoint directed paths from $B$ to $\tilde{C}$ in $K$. Then, the set $C$ and the set $A$ of colours of $\phi(S)$ not in $B$, together have the property we want. Indeed, given any set $\tilde{C}\subset C$ of $\ell$ colours, we can use $\ell$ vertex-disjoint paths from $B$ to $\tilde{C}$ in $K$ to carry out a sequence of switchings along each path, resulting in $\phi(S)$ being coloured with exactly the colours in $A\cup \tilde{C}$.

This illustrates the mechanism behind our colour absorption technique. In practice, to find the sets $C$ and $B$ with this property, we consider the subgraph $K'\subset K$ of edges $\vec{ij}\in E(K)$ for which we also have $\vec{ji}\in E(K)$. It will be easy to see that the random embedding of $S$ will likely result in $K'$ also having $\Omega_\eps(n^2)$ edges. This allows us to apply a famous theorem of Mader to find a well-connected subgraph of the underlying undirected graph of $K'$, in which we can then use a version of Menger's theorem to find vertex-disjoint paths. Any colours with sufficiently many out-neighbours in $K$ into the well-connected subgraph can then be allocated to the colour reservoir $C$.  More details on this can be found in Section~\ref{sec:colourabsorb}.

\paragraph{Overview.} To embed an $n$-vertex graph $H$ which is a tree or a factor in a rainbow fashion in $\fG$, we use the following overview, which is a common framework for embeddings using absorption. Where the $n$-vertex $m$-edge graph $H$ is a tree or an $F$-factor, we start by dividing $H$ into four subgraphs $H_1\cup H_2\cup H_3\cup H_4$ with carefully chosen sizes, before embedding the subgraphs one by one into $\fG=(G_1,\ldots,G_m)$ in the following five steps.

\begin{enumerate}[label = \textbf{Step \arabic*}]
\item  \label{step-1} \textbf{Find a colour absorber.} We embed $H_1$ into $\cup_{i\in [m]}G_i$, while finding disjoint sets $A,C\subset [m]$ such that given any $e(H_1)-|A|$ colours in $C$, we can give the embedding of $H_1$ a rainbow colouring using exactly those colours and the colours in $A$.
\item  \label{step-2}\textbf{Use most of the colours not in $A\cup C$.} We extend the embedding of $H_1$ to one of $H_1\cup H_2$, colouring the newly embedded edges in a rainbow fashion using most of the colours outside of $A\cup C$.
\item \label{step-3} \textbf{Use the last of the colours not in $A\cup C$.} We extend the embedding of $H_1\cup H_2$ to one of $H_1\cup H_2\cup H_3$, using colours in $C$ as well as every unused colour outside of $A\cup C$ for the newly embedded edges while maintaining a rainbow colouring.
\item \label{step-4} \textbf{Embed the last of the vertices of $H$ using colours in $C$.} We extend the embedding of $H_1\cup H_2\cup H_3$ to one of $H_1\cup H_2\cup H_3\cup H_4$, using unused colours in $C$ for the newly embedded edges while maintaining a rainbow colouring.
\item \label{step-5} \textbf{Use the colour absorber.} Finally, we use the remaining unused colours in $C$ along with those in $A$ to colour the embedding of $H_1$.
\end{enumerate}

For \ref{step-1} we use the colour absorption technique described above (and carried out in Section~\ref{sec:colourabsorb}), and the property we find allows us to later carry out \ref{step-5}. For \ref{step-2} and \ref{step-4} we further divide $H_2$ and $H_4$ into subgraphs so that we can iteratively embed the smaller pieces while having relatively many surplus colours (as described above, and as carried out in Section~\ref{surpluscolours}). For \ref{step-3}, by choosing $H_3$ to be the smallest of $H_1$, $H_2$, $H_3$ and $H_4$, we will ensure that we have many spare vertices and colours (from $C$) at this step to make it as easy as possible to use the last of the colours not in $A\cup C$.

\paragraph{Techniques for factors.} The above techniques for spanning trees also work well for embedding $F$-factors so that each edge is a different colour. However, we need to use a different approach for finding $F$-factors where the edges of each copy of $F$ have the same colour, which is different for each copy of $F$ (that is, for Theorem~\ref{thm:monocopies}).

The main difference is in finding $F$-factors with different coloured copies of $F$ in graph collections where we have more colours than we need. Suppose $1/n\ll 1/C\ll \alpha,1/r$ and $\fG=(G_1,\ldots,G_{Cn})$ is a graph collection on $[rn]$ with $\delta(\fG)\geq (\delta_F+\alpha) rn$, where $F$ is an $r$-vertex graph. Set $m:=Cn$. We wish to find $n$ vertex disjoint copies of $F$ which can be each given a different colour. Taking some integer $k$ with $1/n\ll 1/k\ll 1/C$, consider a random subset $V$ of $[rn]$ with $rk$ vertices. It is very likely that, for most colours $i\in [m]$, it holds that $\delta(G_i[V])\geq (\delta_F+\alpha/2)|V|$ (see Proposition~\ref{lem:hypergeom-app}), and, for each such $i\in [m]$, the graph $G_i[V]$ contains an $F$-factor. As the number of colours ($Cn$) is very large, and the number of different $F$-factors with vertex set $V$ is certainly at most $(rk)!$, we will be able to find an $F$-factor $H$ of $\fG[V]$ such that $H\subset G_i$ for $\Omega_{k,r}(n)$ colours $i\in [m]$. Thus, we can colour each copy of $F$ in $H$ a different colour. Of course, this finds a coloured $F$-factor of $\fG[V]$, not $\fG$. However, by partitioning $[rn]$ into vertex sets $V_1\cup\ldots \cup V_\ell$ with size around $rk$ in which most colours have a good minimum degree condition (see Lemma~\ref{lem:partition}), we can iteratively apply this technique to find such an $F$-factor of $\fG[V_i]$ in turn for $i=1,\ldots,\ell$, each time using new colours (see Section~\ref{sec:facspare}).

Note that picking such a random subset $V$ finds copies of $F$ which can be coloured with many different colours (see Claim~\ref{clm:onecopy}). Therefore, we can use this to develop a colour absorber using the same ideas in the earlier sketch, and in particular apply the same underlying result (Lemma~\ref{lem:absorb}) to an appropriate auxiliary graph.

\paragraph{Paper structure.} In the rest of this section, we give some preliminary results we will use, most notably proving our partitioning lemmas. In Section~\ref{sec:colourabsorb}, we develop our colour absorption structures. In Section~\ref{sec:trees}, we prove Theorem~\ref{thm:rainbowkss}. In Section~\ref{sec:factors}, we prove Theorems~\ref{thm:fullyrainbowfactors} and \ref{thm:monocopies}. Finally, in Section~\ref{sec:withspecifiedcolouring}, we discuss $F$-factors with stronger conditions on the edge colouring.

\subsection{Edges in many graphs in a graph collection}\label{sec:twomain}
We now give the simple proposition that underpins our initial rainbow embedding using uncoloured embedding results, as described in Section~\ref{sketch}.
\begin{proposition}\label{prop:alsodirac} Let $0\leq \alpha, \delta\leq 1$ and $m,n\in \N$.
Let $\fG$ be a graph collection on $[n]$ with $|\fG|=m$ and $\delta(\fG)\geq \delta n$. Let $G$ be the graph with vertex set $[n]$, where $e$ is an edge of $G$ if $e\in E(G_i)$ for at least $\alpha m$ values of $i\in [m]$.
Then, $\delta(G)\geq (\delta-\alpha)n$.
\end{proposition}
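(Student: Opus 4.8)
The plan is to fix an arbitrary vertex $v\in[n]$ and estimate $d_G(v)$ by a double-counting argument over the pairs $(u,i)$ with $u\in[n]\setminus\{v\}$, $i\in[m]$ and $uv\in E(G_i)$. First I would count these pairs one way: since $\delta(\fG)\geq\delta n$, every $G_i$ satisfies $d_{G_i}(v)\geq\delta n$, so summing over $i\in[m]$ gives at least $m\delta n$ such pairs.

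Next I would count the same pairs the other way, grouping by $u$. For each $u\in[n]\setminus\{v\}$, let $c(u):=|\{i\in[m]:uv\in E(G_i)\}|$, so that the number of pairs equals $\sum_{u\neq v}c(u)$. By definition of $G$, a vertex $u$ is a neighbour of $v$ in $G$ precisely when $c(u)\geq\alpha m$; for every $u$ we trivially have $c(u)\leq m$, and for every non-neighbour we have $c(u)<\alpha m$. Writing $D:=d_G(v)$, this yields
\[
\sum_{u\neq v}c(u)\leq D\cdot m+(n-1-D)\cdot\alpha m\leq Dm+\alpha mn.
\]

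Combining the two bounds gives $m\delta n\leq Dm+\alpha mn$, and dividing through by $m$ gives $D\geq(\delta-\alpha)n$. Since $v$ was arbitrary, $\delta(G)\geq(\delta-\alpha)n$, as required.

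There is no real obstacle here: the statement is exactly a one-step averaging/double-counting estimate, and the only thing to be slightly careful about is bookkeeping the strict-versus-weak inequalities (using $c(u)\le m$ for neighbours and $c(u)<\alpha m$ for non-neighbours, then rounding up to the clean bound $Dm+\alpha mn$), which does not affect the conclusion.
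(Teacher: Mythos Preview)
Your proof is correct and is essentially identical to the paper's own argument: the paper also fixes $v$, bounds $\sum_{i\in[m]}d_{G_i}(v)$ below by $m\cdot\delta n$ and above by $m\cdot d_G(v)+(n-1)\cdot\alpha m$, and concludes $d_G(v)\geq(\delta-\alpha)n$. The only cosmetic difference is that the paper uses the slightly sharper upper bound $(n-1)\alpha m$ in place of your $\alpha mn$, which does not affect the conclusion.
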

\begin{proof}
For each $v\in [n]$, we have
\[
m\cdot \delta n\leq \sum_{i\in [m]}d_{G_i}(v)\leq\
m\cdot d_G(v)+n\cdot \alpha m,
\]
and therefore $d_G(v)\geq (\delta-\alpha)n$. Thus,  $\delta(G)\geq (\delta-\alpha)n$.
\end{proof}

\subsection{Partitioning the vertex set}
Here we will prove our two main results for partitioning the vertex set of a graph collection (Lemmas~\ref{lem:newdivide1} and \ref{lem:partition}). For these we use the following standard concentration result (see, for example, \cite{AS}), followed by a simple proposition on a single vertex subset chosen randomly.

\begin{lemma}\label{hypergeom}
Let $X$ be a hypergeometric random variable with parameters $N$, $n$ and $m$\footnote{A hypergeometric random variable with parameters $N$, $n$ and $m$ takes value $k$ with probability $\binom{m}{k}\binom{N-m}{n-k}/\binom{N}{n}$.}. Then, for any $t>0$,
\[
\P(|X-\E(X)|\geq t)\leq 2e^{-2t^2/n}.
\]
\end{lemma}

\begin{proposition}\label{lem:hypergeom-app}
Let $0<\delta'<\delta$ and $k \le n \in \N$ with $1/k\ll \delta-\delta'$. Let $G$ be an $n$-vertex graph containing a vertex set $V$ with $|V|\geq k$ such that $d_G(v,V)\geq \delta n$ for each $v\in V(G)$, and let $w\in V(G)$. Then, if $A\subseteq V$ is a vertex set of size $k$ chosen uniformly at random, we have
\[
\P \left( \delta(G[\{w\}\cup A])<\delta'k\right)\le e^{-(\delta-\delta')^2k}.
\]
\end{proposition}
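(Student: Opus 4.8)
The plan is to condition on the random set $A$ and estimate, for each relevant vertex, the probability that its degree into $\{w\}\cup A$ is too small, then take a union bound over the (at most $k+1$) vertices of $G[\{w\}\cup A]$. First I would observe that since we only care about the induced subgraph $G[\{w\}\cup A]$, the only degrees that matter are $d(v, \{w\}\cup A)$ for $v\in \{w\}\cup A$, and each of these is at least $d(v,A)-1$ (the $-1$ accounting for possibly not counting $w$, or for $v=w$ itself where we just need $d(w,A)$). So it suffices to show that with high probability $d(v,A)\geq \delta' k + 1$ for all $v\in\{w\}\cup A$, and in fact it is cleanest to show the stronger statement that $d(v,A)\geq \delta' k + 1$ for \emph{every} $v\in V(G)$, which removes the awkward dependence between the event ``$v\in A$'' and the random variable $d(v,A)$.

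The key computation is a hypergeometric tail bound. Fix $v\in V(G)$. The random variable $d_G(v,A)$ counts, among the $k$ elements of $A$ chosen uniformly from the set $V$ of size $|V|\geq k$, how many lie in $N_G(v)\cap V$, which has size $d_G(v,V)\geq \delta n + 1$. Thus $X:=d_G(v,A)$ is hypergeometric with $\E(X) = k\cdot d_G(v,V)/|V| \geq k\cdot (\delta n+1)/n \geq \delta k$ (using $|V|\leq n$, which holds as $V\subseteq V(G)$). Then by Lemma~\ref{hypergeom} with $t=(\delta-\delta')k$,
\[
\P\big(d_G(v,A) < \delta' k\big) \leq \P\big(X - \E(X) \leq -(\delta-\delta')k\big) \leq 2e^{-2(\delta-\delta')^2 k}.
\]
Since $1/k\ll \delta'$ (hence $1/k \ll \delta - \delta'$ as well, because $\delta - \delta' $ can be bounded below in terms of $\delta'$... actually more carefully, $\delta-\delta'>0$ is a fixed constant and $k$ is large), we have $2e^{-2(\delta-\delta')^2k} \le e^{-(\delta-\delta')^2 k}/(n)$ — wait, we cannot involve $n$ in the hierarchy. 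Instead I would union bound only over the at most $k+1$ vertices in $\{w\}\cup A$: for each such vertex $v$, $d_G(v,\{w\}\cup A)\geq d_G(v,A)-1 \geq \delta'k$ fails with probability at most $2e^{-2(\delta-\delta')^2k}$, so
\[
\P\big(\delta(G[\{w\}\cup A]) < \delta' k\big) \leq (k+1)\cdot 2e^{-2(\delta-\delta')^2 k} \leq e^{-(\delta-\delta')^2 k},
\]
where the last inequality holds since $1/k \ll \delta'$ (so $k$ is large relative to $1/(\delta-\delta')$, absorbing the polynomial factor $2(k+1)$ into half the exponent). Strictly, to make the union bound legitimate I would first establish ``$d_G(v,A)\geq \delta' k+1$ for all $v\in V(G)$'' by union-bounding over all $n$ vertices only if needed, but since the constant $n_0$ is not available here, the cleaner route is the conditional argument: reveal $A$, and for each of the $\le k+1$ vertices $v\in \{w\}\cup A$ apply the tail bound (valid because for a fixed vertex $v$, whether $v\in A$ or not, $d_G(v,A\setminus\{v\})$ is still dominated below by a hypergeometric count minus one, and $d_G(v, \{w\}\cup A) \geq d_G(v,A\setminus\{v\})$).

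The only mild obstacle is this last dependency issue — the set $A$ is random and we are asking about degrees of vertices that are themselves in $A$. I would handle it by the standard trick of noting that for a fixed vertex $v$, the number of neighbours of $v$ among $A\setminus\{v\}$ is distributed as a hypergeometric count (choosing $k$ or $k-1$ elements from $V$ or $V\setminus\{v\}$), whose expectation is still at least $\delta k - O(1) \ge (\delta - (\delta-\delta')/2)k$ for $k$ large, so the same tail bound with a slightly smaller deviation parameter $(\delta-\delta')k/2$ applies; then union-bound over the $\le k+1$ choices of $v$. Everything else is routine: plug into Lemma~\ref{hypergeom}, and use $1/k\ll\delta'$ to absorb the polynomial prefactor.
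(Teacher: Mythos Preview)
Your approach is essentially the same as the paper's: apply the hypergeometric tail bound (Lemma~\ref{hypergeom}) to $d_G(v,A)$ for each relevant vertex and union-bound over the $k+1$ vertices of $\{w\}\cup A$, absorbing the $2(k+1)$ prefactor using $1/k\ll\delta-\delta'$. The paper handles the dependency you flag in exactly the way you suggest at the end---by conditioning on $v\in A$ (so that $d_G(v,A)$ becomes hypergeometric with parameters $|V|-1$, $k-1$, $d_G(v,V)$) and writing the union bound as $\P(d_G(w,A)<\delta'k)+\sum_{v}\P(v\in A)\,\P(d_G(v,A)<\delta'k\mid v\in A)$, which makes the ``$\le k+1$ terms'' precise.
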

\begin{proof} Let $t=(\delta-\delta')k\sqrt{2/3}$.
For each $v\in V$, note that, conditioned on $v\in A$, the random variable $d_{G}(v,A)$ is hypergeometrically distributed with parameters $|V|-1$, $k-1$ and $d_G(v,V)$, and mean $(k-1)\cdot d_G(v,V)/(|V|-1)\geq (k-1)\delta n/(n-1)> \delta'k+t$.
Thus, by Lemma~\ref{hypergeom} applied with $t$ we have
\[
\P\left(d_{G}(v,A)< \delta'k\right|v\in A)\le 2e^{-\frac{4}{3}(\delta-\delta')^2k}.
\]
Similarly, we have that $\P(d_{G}(w,A)<\delta'k) \le 2\exp(-4(\delta-\delta')^2k/3)$.
Therefore, the probability that we have $\delta(G[\{w\}\cup A])<\delta'k$ is at most
\[
\P(d_{G}(w,A)<\delta'k)+\sum_{v\in V} \P(v\in A)\cdot \P\left(d_{G}(v,A)< \delta'k|v\in A\right)\le 2(k+1)e^{-\frac{4}{3}(\delta-\delta')^2k}\le e^{-(\delta-\delta')^2k},\]
as required.
\end{proof}

For embedding spanning trees, we use the following partition result into constantly many vertex sets.

\begin{lemma}\label{lem:newdivide1}
Let $0<1/n\ll \eps,\alpha\leq 1$, and let $\delta>0$, $n\in \N$ and $m\leq n/\eps$. Suppose $\fG=(G_1,\ldots,G_m)$ is a graph collection with vertex set $[n]$ and suppose $V\subset [n]$ such that $d_{G_i}(v,V)\geq (\delta+\eps)n$ for each $i\in [m]$ and $v\in [n]$. Let $k\in \N$, and let $n_1,\ldots,n_k\geq \alpha n$ be integers such that $\sum_{i\in [k]}n_i=|V|$.

Then, there is a partition $V=V_1\cup \ldots \cup V_k$ such that, for each $i\in [k]$, $|V_i|=n_i$ and $d_{G_j}(v,V_i)\geq (\delta+\eps/2)n_i$ for each $j\in [m]$ and $v\in [n]$.
\end{lemma}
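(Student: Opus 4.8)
The plan is to obtain the partition by a random splitting argument together with a union bound, using Proposition~\ref{lem:hypergeom-app} (or rather the concentration estimate behind it) as the main tool. Concretely, I would choose the partition $V=V_1\cup\ldots\cup V_k$ uniformly at random among all partitions of $V$ into parts of the prescribed sizes $n_1,\ldots,n_k$; equivalently, assign a uniformly random ordered $n_1,\ldots,n_k$-composition to $V$. For a fixed index $i\in[k]$, a fixed colour $j\in[m]$ and a fixed vertex $v\in[n]$, the quantity $d_{G_j}(v,V_i)$ counts how many of the $d_{G_j}(v,V)\geq(\delta+\eps)n$ neighbours of $v$ in $V$ land in the randomly chosen size-$n_i$ set $V_i$. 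Since $V_i$ is a uniformly random $n_i$-subset of $V$ (this is the marginal of the random partition on the $i$-th part), $d_{G_j}(v,V_i)$ is hypergeometrically distributed with mean $n_i\cdot d_{G_j}(v,V)/|V|\geq n_i(\delta+\eps)n/|V|\geq(\delta+\eps)n_i$, using $|V|\leq n$.

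Next I would apply the concentration bound of Lemma~\ref{hypergeom} with $t=(\eps/2)n_i$ to control the lower tail: the probability that $d_{G_j}(v,V_i)<(\delta+\eps/2)n_i$ is at most $2\exp(-2t^2/n_i)=2\exp(-\eps^2 n_i/2)$. Since $n_i\geq\alpha n$, this is at most $2\exp(-\eps^2\alpha n/2)$, which is exponentially small in $n$. Now I take a union bound over all $i\in[k]$, all $j\in[m]$ and all $v\in[n]$. The number of such triples is at most $k\cdot m\cdot n\leq n\cdot(n/\eps)\cdot n=n^3/\eps$ (using $k\leq |V|\leq n$ and $m\leq n/\eps$), which is only polynomial in $n$. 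Since $1/n\ll\eps,\alpha$, the quantity $(n^3/\eps)\cdot 2\exp(-\eps^2\alpha n/2)$ is less than $1$ for $n$ large, so with positive probability no bad event occurs, i.e. there exists a partition with $|V_i|=n_i$ for all $i$ and $d_{G_j}(v,V_i)\geq(\delta+\eps/2)n_i$ for every $i\in[k]$, $j\in[m]$, $v\in[n]$, as required.

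The only point requiring a little care, and what I would flag as the main (minor) obstacle, is the book-keeping in the exponent: I need the exponential saving $\exp(-\Theta(\eps^2\alpha n))$ coming from the single-part concentration estimate to beat the polynomial-in-$n$ factor $kmn$ from the union bound, and this is exactly where the hypotheses $1/n\ll\eps,\alpha$ and $m\leq n/\eps$ are used. One also has to be slightly careful that the mean of the hypergeometric variable is genuinely at least $(\delta+\eps)n_i$ and not merely $(\delta+\eps)n_i$ up to a lower-order error — here the clean inequality $|V|\leq n$ does the job, so no extra slack in $\eps$ is lost, and the full gap of $\eps/2$ is available for the deviation $t$. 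A cosmetic alternative to the union bound over all of $[n]$ would be to split the partition off one part at a time and invoke Proposition~\ref{lem:hypergeom-app} directly (with $w$ absorbing the vertex dependence), but the direct global union bound is cleaner and avoids iterating, so that is the route I would write up.
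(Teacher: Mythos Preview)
Your proposal is correct and follows essentially the same approach as the paper: choose the partition uniformly at random, apply hypergeometric concentration to each triple $(i,j,v)$, and take a union bound over the polynomially many triples. The only cosmetic difference is that the paper cites Proposition~\ref{lem:hypergeom-app} directly (yielding the bound $k\cdot m\cdot n\cdot\exp(-\eps^2\alpha n/4)=o(1)$), whereas you unpack the hypergeometric estimate from Lemma~\ref{hypergeom} explicitly; your version is in fact slightly more careful about the bookkeeping.
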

\begin{proof} Pick a partition $[n]=V_1\cup \ldots \cup V_k$ uniformly at random from such partitions where, for each $i\in [k]$, $|V_i|=n_i$. As $n_i\geq \alpha n$ for each $i\in [k]$, we have that the probability there is an $i\in [k]$, $j\in [m]$ and $v\in [n]$ with $d_{G_j}(v,V_i)< (\delta+\eps/2)n_i$ is, by Proposition~\ref{lem:hypergeom-app}, at most $k\cdot m\cdot n\cdot \exp(-\eps^2\alpha n/4)=o(1)$. Here we use $1/n\ll \eps, \alpha$. Therefore, there is some partition for which the desired property holds.
\end{proof}

For embedding factors, we use the following partitioning result into many vertex subsets which each have constant size. The proof is by an iterative partitioning, using calculations based on those by Barber, K\"uhn, Lo and Osthus~\cite[Section 7]{BKLO}.

\begin{lemma}\label{lem:partition}
Let $0<1/k\ll \eps\leq 1$, and let $\delta>0$ and $n\in \N$. Suppose $\fG$ is a graph collection on vertex set $[n]$ with $\delta(\fG)\geq (\delta+\eps)n$. Let $\ell\in \N$, and let $n_1,\ldots,n_\ell\geq k$ be integers such that $\sum_{i\in [\ell]}n_i=n$.
Let $m=|\fG|$.

Then, there is a partition $[n]=V_1\cup \ldots \cup V_\ell$ such that, for each $i\in [\ell]$, $|V_i|=n_i$, and, for all but at most $m/k^2$ values of $j\in [m]$, we have $\delta(G_j[V_i])\geq (\delta+\eps/2)n_i$.
\end{lemma}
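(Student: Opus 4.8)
The statement asks for a partition into $\ell$ parts of prescribed sizes $n_1,\dots,n_\ell\ge k$, with the guarantee that, for each part $V_i$, all but at most $m/k^2$ colours $j\in[m]$ satisfy $\delta(G_j[V_i])\ge(\delta+\eps/2)n_i$. The natural approach is iterative bisection: rather than splitting $[n]$ into $\ell$ parts in one shot, repeatedly split a current vertex set roughly in half (or in a controlled ratio matching the target sizes), each time using a random split together with Proposition~\ref{lem:hypergeom-app} to control the minimum degree of $G_j$ restricted to each half, for each colour $j$. This is the strategy of Barber, K\"uhn, Lo and Osthus~\cite{BKLO} referenced in the excerpt. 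The key point is that the error budget $m/k^2$ is large enough to absorb a union bound over the $O(\log(n/k))$ levels of the recursion, and the degree-loss budget (from $\eps$ down to $\eps/2$) is large enough to absorb a geometric-type sum over the levels, provided we let the degree slack decrease only slightly at each level (e.g.\ halving the ``remaining slack'' each time, so the total loss is bounded).

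First I would set up the recursion precisely. Introduce a sequence of slack parameters $\eps = \eps_0 > \eps_1 > \cdots$ with $\eps_t = \eps(1 + 2^{-t})/2$, so that $\eps_t \to \eps/2$ and $\eps_{t-1} - \eps_t = \eps 2^{-t-1}$. At stage $t$ of the recursion we have a collection of pairwise-disjoint vertex sets, each of size at least $k$, whose union is $[n]$; for each such set $U$ and all but a bounded number of colours $j$ we have $d_{G_j}(v,U)\ge (\delta+\eps_t)|U|$ for all $v\in V(G)$ — note I track the stronger ``degree from every vertex of the host'' condition rather than just $\delta(G_j[U])$, exactly as Lemma~\ref{lem:newdivide1} does, since Proposition~\ref{lem:hypergeom-app} is phrased that way and this is what makes the induction go through. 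To split a set $U$ into two pieces of sizes $a$ and $|U|-a$ (with $a,|U|-a\ge k$), pick a uniformly random subset $A\subseteq U$ of size $a$; by Proposition~\ref{lem:hypergeom-app} applied with $\delta' = \delta+\eps_{t+1}$ (valid since $1/k\ll\eps$ gives $1/k\ll\eps_{t+1}-\delta'+\delta = \eps_{t+1}$... more precisely $1/k\ll\eps/2\le\eps_{t+1}$), for each fixed colour $j$ the probability that some vertex $w$ has $d_{G_j}(w,A) < (\delta+\eps_{t+1})|A|$ or $d_{G_j}(w,U\setminus A)<(\delta+\eps_{t+1})|U\setminus A|$ is at most $n\cdot 2e^{-(\eps_t-\eps_{t+1})^2 k} \le e^{-\Omega(\eps^2 4^{-t}k)}$, which — for the relevant range of $t$, namely $t = O(\log n)$ so $4^{-t}\ge n^{-O(1)}$ and $\eps^2 4^{-t}k \gg \log n$ since $1/k\ll\eps$ — can be made smaller than any fixed small power of $n^{-1}$; so in expectation only a tiny fraction of colours ``go bad'' at each split, and by Markov we can fix a split where the number of newly-bad colours is at most (say) $m/k^3$ times a suitable factor. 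Running the recursion to produce parts of the prescribed sizes $n_1,\dots,n_\ell$ takes $O(\log(n/k))$ levels (merging the target sizes into a balanced binary tree so that every intermediate set stays $\ge k$), and summing the bad-colour counts over all splits a given final part descends through gives at most $m/k^2$ total, as required.

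Organising the recursion so that all intermediate pieces have size at least $k$ is a minor combinatorial bookkeeping issue: build a binary tree whose leaves are the $\ell$ target parts, arranged so that at every internal node the two subtrees have total target sizes differing by less than the smaller of the two (always possible by a greedy balancing), which keeps every intermediate set's size at least $k$ and keeps the recursion depth $O(\log \ell)\le O(\log(n/k))$. Then I would run the random bisection down this tree, one level at a time, fixing at each node a good split via the union bound and Markov argument above, and conclude by verifying that each leaf $V_i$ has $|V_i| = n_i$ and inherits (from the chain of $O(\log(n/k))$ splits above it) the property that at most $m/k^2$ colours fail $d_{G_j}(v,V_i)\ge(\delta+\eps/2)n_i$ for all $v$ — in particular $\delta(G_j[V_i])\ge(\delta+\eps/2)n_i$ for the remaining colours.

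**Main obstacle.** The delicate part is the quantitative balancing of the two budgets across the $\Theta(\log(n/k))$ levels: the degree slack must be spent so that it telescopes to exactly $\eps/2$ (hence the $\eps_t = \eps(1+2^{-t})/2$ choice, whose per-level decrements form a convergent series), while simultaneously the per-level failure probability from Proposition~\ref{lem:hypergeom-app} — which degrades like $e^{-\Omega((\eps_t-\eps_{t+1})^2 k)} = e^{-\Omega(\eps^2 4^{-t}k)}$ — must stay small enough, for all $t$ up to the recursion depth, that the accumulated bad-colour count per leaf is below $m/k^2$. This forces a compatibility check between the recursion depth (roughly $\log_2(n/k)$), the hierarchy $1/k\ll\eps$, and the target error $m/k^2$; concretely one needs $\eps^2 4^{-\log_2(n/k)} k = \eps^2 k^3/n^2 \gg \log n$ at the deepest level, which may require a slightly more careful split ratio (splitting in a fixed bounded ratio so depth is $O(\log n)$ is fine, but one should double-check the constants), or alternatively a cleverer accounting where most of the degree slack is spent in the first few (shallow, cheap) levels. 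Getting these constants to line up is exactly the calculation the authors attribute to~\cite{BKLO}; everything else is a routine union bound plus Markov's inequality.
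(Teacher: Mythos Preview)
Your overall skeleton matches the paper: iterative bisection, random splits controlled by Proposition~\ref{lem:hypergeom-app}, and Markov's inequality to bound the newly-bad colours at each level. The gap is exactly where you flag the ``main obstacle'', and your suggested fixes point the wrong way. With your schedule $\eps_{t-1}-\eps_t=\eps\,2^{-t-1}$, the concentration exponent at the bottom of the recursion is of order $(\eps k/n)^2\cdot k=\eps^2k^3/n^2$, which can be arbitrarily small since the lemma places no upper bound on $n$ in terms of $k$; so at the deepest levels the random split gives no control and Markov buys nothing. Spending \emph{more} slack at the shallow levels, as you suggest, only makes this worse: the shallow levels have huge vertex sets and excellent concentration even with tiny slack, whereas the deep levels have sets of size $\Theta(k)$ and need the most.

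The paper's resolution is to invert your schedule. At the step where the current index-groups have size $\bar m$ (so the vertex sets have size at least $k\bar m/2$), it spends slack $\bar\eta=(k\bar m)^{-1/3}$. The concentration exponent is then $\bar\eta^{\,2}\cdot k\bar m/2=(k\bar m)^{1/3}/2\ge k^{1/3}/2$, uniformly large at \emph{every} level. On the other side, the total slack spent over the recursion is $\sum_i(k\ell_i)^{-1/3}$ with $\ell_i$ roughly halving down to $\ell_a=1$; reindexing from the bottom this is $k^{-1/3}\sum_j 2^{-j/3}=O(k^{-1/3})\le\eps/2$ since $1/k\ll\eps$. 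The paper charges the same increment $(k\bar m)^{-1/3}$ to the bad-colour fraction (it tracks both the degree deficit and the bad-colour budget with a single parameter $\eta$), and the same geometric sum keeps that below~$1$. One further bookkeeping point: your balanced binary tree on target \emph{sizes} need not exist (take one $n_i>n/2$); the paper sidesteps this by bisecting the \emph{index set} $[\ell]$ instead, so every intermediate group has between $\bar m/2$ and $\bar m$ indices and hence at least $k\bar m/2$ vertices, regardless of how unequal the $n_i$ are.
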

\begin{proof} For each $r\in \N$ and $\eta\geq 0$, call $\mathcal{I}=(I_1,\ldots,I_s)$ an \emph{$(r,\eta)$-good partition of $[\ell]$} if there is a partition $U_1\cup \ldots \cup U_s$ of $[n]$ satisfying the following.
\begin{enumerate}[label = \textbf{A\arabic{enumi}}]
\item For each $i\in [s]$, $r/2\leq |I_i|\leq r$.\label{prop:0}
\item For each $i\in [s]$, $|U_i|=\sum_{j\in I_i}n_j$.\label{prop:1}
\item For each $i\in [s]$, for all but at most $\eta m/k^2$ values of $j\in [m]$, we have $\delta(G_{j}[U_{i}])\geq \left(\delta+\eps-\eta\right)|U_i|$. \label{prop:2}
\end{enumerate}

Note that $\mathcal{I}=([\ell])$ is an $(\ell,0)$-good partition of $[\ell]$, and, as $\eps\leq 1$, to show the lemma it is sufficient to show that $[\ell]$ has a $(1,\eps/2)$-good partition. We will show such a partition exists by repeatedly applying the following claim.

\begin{claim}\label{clm:gdtogd} For each $m,\bar{m}\geq 1$ with $m/2\leq \bar{m}\leq m\leq \ell$ and $\eta>0$, if $[\ell]$ has an $(m,\eta)$-good partition, then it has an $(\bar{m},\eta+(k\bar{m})^{-1/3})$-good partition.
\end{claim}
\begin{proof}
Let $(I_1,\ldots,I_s)$ be an $(m,\eta)$-good partition of $[\ell]$ with accompanying partition $U_1\cup \ldots \cup U_s$ of $[n]$. For each $i\in [s]$, let $m_i=\lfloor 2|I_i|/\bar{m}\rfloor$, so that, by \ref{prop:0}, $m_i\leq 4$, and let $\mathcal{I}_i$ be a partition of $I_i$ into sets $I_{i,1}\cup \ldots\cup I_{i,m_i}$ with $\bar{m}/2\leq |I_{i,j}|\leq \bar{m}$ for each $j\in [m_i]$.
For each $i\in [s]$, partition $U_i$ as $\cup_{j\in [m_i]}U_{i,j}$ uniformly at random so that $|U_{i,j}|=\sum_{i'\in I_{i,j}}n_{i'}$ for each $j\in [m_i]$. Note that this is possible by \ref{prop:1}.

Let $\bar{\eta}=(k\bar{m})^{-1/3}$. For each $i\in [s]$, let $J_i$ be the set of $j\in [m]$ with $\delta(G_{j}[U_{i}])\geq  \left(\delta+\eps-\eta\right)|U_i|$, so that $|[m]\setminus J_{i}|\leq \eta m/k^2$ by \ref{prop:2}.
For each $j'\in [m_i]$, let $J_{i,j'}$ be the set of $j\in [m]$ with $\delta(G_{j}[U_{i,j'}])\geq \left(\delta+\eps-\eta-\bar{\eta}\right)|U_{i,j'}|$.
Let $p=\exp(-(k\bar{m})^{1/4})$. Then, by Proposition~\ref{lem:hypergeom-app}, for each $j\in J_{i}$, as $|U_{i,j'}|\geq k|I_{i,j'}|\geq k\bar{m}/2$, we have
\begin{align*}
\P(j\notin J_{i,j'})&=\P(\delta(G_{j}[U_{i,j'}])< (\delta+\eps-\eta-\bar{\eta})|U_{i, j'}|)\leq  \exp(-\bar{\eta}^2|U_{i,j'}|)
\leq  \exp(-\bar{\eta}^2k\bar{m}/2)\leq p.
\end{align*}
Therefore, as $|[m]\setminus J_{i}|\leq \eta m/k^2$,  using Markov's inequality we have, for each $j'\in [m_i]$,
\begin{align}
\P(|[m]\setminus J_{i,j'}|\ge (\eta+\bar{\eta})m/k^2)
&\leq
\P(|J_i\setminus J_{i,j'}|\ge \bar{\eta}m/k^2)
\leq \frac{pm}{\bar{\eta}m/k^2}\leq 1/10,\label{eqn:MI}
\end{align}
as $1/k\ll 1$.
For each $i\in [s]$, let $E_i$ be the event that, for each $j\in [m_i]$, we have $|[m]\setminus J_{i,j}|\le (\eta+\bar{\eta})m/k^2$. As $m_i\leq 4$, by \eqref{eqn:MI}, we have that $E_i$ holds with positive probability.

Noting that any events $E_{i}$ and $E_{j}$ are independent if $i\neq j$, we can therefore choose partitions $U_{i,j}$, $i\in [s]$ and $j\in [m_i]$, such that $E_{i}$ holds for each $i\in [s]$. Let $s'=\sum_{i\in [s]}m_i$ and relabel the sets $I_{i,j}$ and $U_{i,j}$, $i\in [s]$ and $j\in [m_i]$, as $I'_1,\ldots,I'_{s'}$ and $U'_1,\ldots,U'_{s'}$ to get an  $(\bar{m},\eta+(k\bar{m})^{-1/3})$-good partition, as required.
\end{proof}

For the appropriate integer $a\leq 2\log_2\ell$, take the shortest sequence of integers $\ell_0=\ell,\ldots,\ell_a=1$ with $\ell_i=\lfloor \ell_{i-1}/2\rfloor$ for each $i\in [a]$. Now, $[\ell]$ has an $(\ell,0)$-good partition, and therefore, by repeated application of Claim~\ref{clm:gdtogd}, it has an $(\ell_a,\sum_{i=0}^a(k\ell_i)^{-1/3})$-good partition. Note that since $\ell_i\ge 2^{a-i}$ for all $i$, we have $\sum_{i=0}^a(k\ell_i)^{-1/3}\leq k^{-1/3}\sum_{i=0}^a2^{-(a-i)/3}=k^{-1/3}\sum_{i=0}^a2^{-i/3}\leq 6k^{-1/3}\leq \eps/2$, as $1/k\ll \eps$. As $\ell_a=1$, we thus have that $[\ell]$ has a $(1,\eps/2)$-good partition, as required.
\end{proof}


\section{Colour absorption}\label{sec:colourabsorb}
We now detail our colour absorption, as described in Section~\ref{sketch}. We start by recalling a famous theorem of Mader and a version of Menger's theorem (see, for example, \cite{moderngraphtheory}), as follows.
\begin{theorem}\label{thm:mader}
Every graph with average degree at least $4k$ has a $(k+1)$-connected subgraph.
\end{theorem}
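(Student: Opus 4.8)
The plan is to deduce Theorem~\ref{thm:mader} from the following quantitative statement, proved by induction on the number of vertices: \emph{for every integer $\ell\ge 1$, every graph $H$ with $|H|\ge 2\ell-1$ and $e(H)\ge (2\ell-3)(|H|-\ell+1)+1$ contains an $\ell$-connected subgraph.} To obtain the theorem from this, I would first note that if $G$ has average degree at least $4k$ then, writing $n:=|G|$, we have $n\ge 4k+1\ge 2(k+1)-1$ (the average degree cannot exceed $n-1$) and $e(G)\ge 2kn$; a one-line computation gives $2kn-((2k-1)(n-k)+1)=n+2k^2-k-1\ge 0$ for $k\ge 1$, so $e(G)\ge (2(k+1)-3)(n-(k+1)+1)+1$, and the quantitative statement with $\ell=k+1$ produces the desired $(k+1)$-connected subgraph (the case $k=0$ being trivial).

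For the induction, the base case $|H|=2\ell-1$ is immediate, since then the hypothesis forces $e(H)=\binom{2\ell-1}{2}$, i.e.\ $H=K_{2\ell-1}$, which is $(2\ell-2)$-connected and hence $\ell$-connected for $\ell\ge 2$ (the case $\ell=1$ is trivial throughout). For the inductive step with $|H|=n\ge 2\ell$: if $H$ has a vertex $v$ with $d_H(v)\le 2\ell-3$, then $H-v$ has $n-1\ge 2\ell-1$ vertices and at least $(2\ell-3)(n-\ell+1)+1-(2\ell-3)=(2\ell-3)((n-1)-\ell+1)+1$ edges, so induction finishes. Otherwise $\delta(H)\ge 2\ell-2$; if $H$ is $\ell$-connected we are done, and if not we take a separation with vertex classes $A,B$ satisfying $A\cup B=V(H)$, $|A\cap B|\le \ell-1$, with $A\setminus B$ and $B\setminus A$ both nonempty and joined by no edges (so every edge of $H$ lies in $H[A]$ or in $H[B]$). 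Every vertex of $A\setminus B$ has all of its at least $2\ell-2$ neighbours in $A$, so $|A|\ge 2\ell-1$, and likewise $|B|\ge 2\ell-1$, while $|A|,|B|<n$. Since $e(H[A])+e(H[B])\ge e(H)$, if we had both $e(H[A])\le (2\ell-3)(|A|-\ell+1)$ and $e(H[B])\le (2\ell-3)(|B|-\ell+1)$, then, using $|A|+|B|=n+|A\cap B|\le n+\ell-1$ together with $2\ell-3\ge 0$, we would get $e(H)\le (2\ell-3)(n-\ell+1)$, contradicting the hypothesis; hence one of $H[A]$, $H[B]$ satisfies the quantitative hypothesis on fewer than $n$ vertices, and we are done by induction.

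The one genuinely delicate point — and the part I would flag as the crux — is the choice of threshold $(2\ell-3)(|H|-\ell+1)+1$. It is engineered precisely so that in the separation case the two bounds add up to $(2\ell-3)(|A|+|B|-2\ell+2)=(2\ell-3)(n+|A\cap B|-2\ell+2)\le (2\ell-3)(n-\ell+1)$, the surplus $|A\cap B|\le \ell-1$ being exactly cancelled by the two $(-\ell+1)$ shifts, so that no slack is needed to reach the contradiction; a naive linear threshold such as $2(\ell-1)|H|$ does not reproduce itself under splitting in this way, which is what forces the bound to take this shape. Everything else — the degree bookkeeping when deleting a low-degree vertex, the identification of the extremal graph in the base case, and the arithmetic of the reduction from average degree $4k$ — is routine.
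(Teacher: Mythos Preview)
Your argument is correct and is precisely the standard textbook proof of Mader's theorem (this is essentially the proof in Diestel's \emph{Graph Theory}, and presumably in the reference \cite{moderngraphtheory} as well): the auxiliary statement with the threshold $(2\ell-3)(|H|-\ell+1)+1$, the low-degree vertex deletion, and the separation bookkeeping are all carried out cleanly, and your reduction from average degree $4k$ to the case $\ell=k+1$ is fine.

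However, there is nothing to compare against: the paper does not prove Theorem~\ref{thm:mader}. It is stated at the start of Section~\ref{sec:colourabsorb} as a classical result (``a famous theorem of Mader'') with a reference to \cite{moderngraphtheory}, and is then used as a black box inside the proof of Lemma~\ref{lem:absorb}. So your proposal supplies a proof where the paper simply quotes the literature.
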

\begin{theorem}\label{thm:maxflow}
If a graph $G$ is $k$-connected and contains two disjoint subsets $A,B\subset V(G)$ with size $k$, then $G$ contains $k$ vertex-disjoint paths from $A$ to $B$.
\end{theorem}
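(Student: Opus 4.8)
The plan is to obtain this as a consequence of the vertex form of Menger's theorem, via the standard trick of adding two apex vertices. First I would form a graph $G^+$ from $G$ by adjoining two new vertices $a^*$ and $b^*$, making $a^*$ adjacent to every vertex of $A$ and $b^*$ adjacent to every vertex of $B$, with no edge $a^*b^*$. Note that $k$ internally vertex-disjoint $a^*$--$b^*$ paths in $G^+$ become, after deleting $a^*$ and $b^*$, exactly $k$ pairwise vertex-disjoint paths in $G$ whose first and last vertices lie in $A$ and $B$ respectively; these paths are nontrivial precisely because $A\cap B=\emptyset$ (a path $a^*vb^*$ would force $v\in A\cap B$), so producing such a system of $a^*$--$b^*$ paths in $G^+$ suffices. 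Since $a^*$ and $b^*$ are nonadjacent, Menger's theorem reduces the task to showing that every $a^*$--$b^*$ vertex separator $S\subseteq V(G)$ in $G^+$ has $|S|\geq k$.

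The one place where the hypothesis is genuinely used is this separator bound, and it is short: suppose $|S|\leq k-1$. As $G$ is $k$-connected, $G-S$ is connected and nonempty, and since $|A\setminus S|\geq |A|-|S|\geq 1$ and $|B\setminus S|\geq 1$ we may choose $a\in A\setminus S$ and $b\in B\setminus S$. A path from $a$ to $b$ in $G-S$, together with the edges $a^*a$ and $bb^*$, yields an $a^*$--$b^*$ path in $G^+-S$, contradicting that $S$ separates $a^*$ from $b^*$. Hence no separator of size below $k$ exists, and Menger's theorem supplies the required $k$ internally disjoint $a^*$--$b^*$ paths, which restrict to $k$ vertex-disjoint $A$--$B$ paths in $G$.

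Since the argument is essentially a reduction to a cited theorem, there is no real technical obstacle; the only points requiring any care are keeping $G-S$ connected (which is exactly where $k$-connectivity enters) and using the disjointness of $A$ and $B$ to be sure the extracted paths are nontrivial and remain pairwise vertex-disjoint once the apex vertices are removed. An alternative would be to invoke the ``fan'' version of Menger's theorem directly and avoid the auxiliary vertices, but the two-apex construction keeps the bookkeeping minimal and makes the role of the $k$-connectivity hypothesis transparent.
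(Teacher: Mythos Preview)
The paper does not actually prove this statement; it simply cites it as a standard version of Menger's theorem (see the line ``a version of Menger's theorem (see, for example, \cite{moderngraphtheory})'' preceding the statement). Your proposal is a correct and standard derivation via the two-apex construction, so there is nothing to compare against and no gap to report.
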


We now prove a lemma (Lemma~\ref{lem:absorb}), which allows us to find the sets $A$ and $C$ described in Section~\ref{sketch}. We prove this in a more general setting to be applied to an auxiliary bipartite graph. This shows that in an unbalanced dense bipartite graph we can robustly find a matching covering the smaller class, as follows.

\begin{lemma}\label{lem:absorb} Let $\alpha\in (0,1)$ and let $n,m$ and $\ell\geq 1$ be integers satisfying $\ell\leq \alpha^7m/10^5$ and $\alpha^2n\geq 8m$. Let $H$ be a bipartite graph on vertex classes $A$ and $B$ such that $|A|=m$, $|B|=n$ and, for each $v\in A$, $d_H(v)\geq \alpha n$.

Then, there are disjoint subsets $B_0,B_1\subset B$ with $|B_0|=m-\ell$ and $|B_1|\geq \alpha^7n/10^5$, and the following property. Given any set $U\subset B_1$ of size $\ell$, there is a perfect matching between $A$ and $B_0\cup U$ in $H$.
\end{lemma}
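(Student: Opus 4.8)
The plan is to find, inside $H$, a large set $B_1 \subseteq B$ and a set $B_0 \subseteq B$ disjoint from $B_1$ with $|B_0| = m - \ell$, such that every vertex of $A$ still has many neighbours in $B_0 \cup B_1$, and moreover such that the bipartite subgraph between $A$ and $B_1$ has good connectivity/expansion. First I would sample (or greedily extract) a random subset $B' \subseteq B$ of size roughly $\alpha n / 2$; a standard concentration argument (Lemma~\ref{hypergeom}, as in Proposition~\ref{lem:hypergeom-app}) shows that with positive probability every $v \in A$ satisfies $d_H(v, B') \geq \alpha^2 n / 4$, say. We will set $B_1$ to be (a subset of) $B'$, so $|B_1| = \Theta(\alpha n)$, comfortably at least $\alpha^7 n / 10^5$, and we put $B_0 := $ an arbitrary subset of $B \setminus B'$ of size $m - \ell$ (possible since $|B \setminus B'| \geq n - \alpha n/2 \geq n/2 \geq 4m \geq m - \ell$, using $\alpha^2 n \geq 8m$).

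The real content is showing that for every $U \subseteq B_1$ with $|U| = \ell$ there is a perfect matching between $A$ and $B_0 \cup U$. For this I would verify Hall's condition for the bipartite graph $H[A, B_0 \cup U]$. Take any $S \subseteq A$. If $|S| \leq \alpha n / 4$, then any single vertex $v \in S$ has $d_H(v, B_0) \geq d_H(v) - |B'| \geq \alpha n - \alpha n/2 = \alpha n/2 \geq |S|$ — wait, that only handles $|S|$ small; more carefully, if $|S|$ is small compared to the degrees into $B_0$, Hall's condition into $B_0$ alone suffices, and if $|S| = |A| = m$ we need $N_H(S) \cap (B_0 \cup U) $ to have size $\geq m$, which holds since $|B_0| = m - \ell$ and we must argue $U$ is hit; the intermediate ranges are the delicate part. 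The cleaner route is the one the lemma's phrasing ("robustly find a matching covering the smaller class") hints at: ensure the bipartite graph between $A$ and $B_1$ is a good expander, so that one can route the "last $\ell$" vertices of $A$ that are not matched into $B_0$ via augmenting paths into any prescribed $U$. Concretely, I would use the defect version of Hall's theorem on $H[A,B_0]$ to get a matching of size $\geq m - \ell$ (this needs $d_H(v,B_0) \geq \ell$ for all $v$, clear), and then show any such partial matching can be completed into $B_0 \cup U$ by finding $\ell$ vertex-disjoint augmenting paths from the unmatched vertices of $A$ to $U$; here is where Mader's theorem (Theorem~\ref{thm:mader}) and Menger's theorem (Theorem~\ref{thm:maxflow}) enter, applied to an auxiliary graph built from the neighbourhoods, exactly as the paper's sketch describes for the colour-absorption digraph.

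The main obstacle I expect is the uniformity over all choices of $U$: a naive union bound over the $\binom{|B_1|}{\ell}$ choices is far too lossy, so the connectivity argument must produce the augmenting paths in a way that works for an arbitrary target set $U$ simultaneously — i.e. we need a subgraph $H' \subseteq H[A \cup B_1]$ that is $\ell$-connected (or has an $\ell$-connected subgraph covering enough of $A$), obtained from an average-degree bound via Mader, after which Menger delivers $\ell$ disjoint paths to any $U$. Making the parameters match — that the extracted expander still meets $A$ densely enough, and that the polynomial slack $\ell \leq \alpha^7 m / 10^5$ and $\alpha^2 n \geq 8 m$ are exactly what is needed to run Mader ($4\ell$-average-degree) and Menger — will require some care but no new ideas beyond those already flagged in Section~\ref{sketch}.
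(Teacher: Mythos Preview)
Your proposal has a genuine gap at the very first step, and it propagates through both your Hall and defect-Hall arguments. You set $B_0$ to be \emph{an arbitrary} subset of $B\setminus B'$ of size $m-\ell$. But the hypothesis only gives a degree bound on vertices of $A$; vertices of $B$ may be isolated in $H$. Since $m-\ell\le m\le \alpha^2 n/8$ is tiny compared with $|B\setminus B'|\ge n/2$, an arbitrary $B_0$ could consist entirely of isolated vertices, and then there is no matching from $A$ into $B_0$ at all. Your inequality $d_H(v,B_0)\ge d_H(v)-|B'|$ is simply false: it would hold for $B\setminus B'$ in place of $B_0$, but not for an arbitrary small subset of it. The same error undercuts your claim that ``$d_H(v,B_0)\ge \ell$ for all $v$, clear'' in the defect-Hall route.

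The paper's proof fixes this by letting the matching choose $B_0$, not the other way round. One first greedily picks distinct $b_i\in N(a_i)$ (possible since $d_H(a_i)\ge \alpha n>m$), so $\{a_ib_i:i\in[m]\}$ is already a perfect matching from $A$ to $\{b_1,\dots,b_m\}$. The auxiliary graph for Mader/Menger is then built on $[m]$ (i.e.\ on the $A$-side, not on $H[A\cup B_1]$ as you suggest): put an edge $ij$ when $b_i\in N(a_j)$ and $b_j\in N(a_i)$, so that the matched pairs at $i$ and $j$ can be swapped. A Jensen-type count on common neighbourhoods shows this swap graph has average degree $\Omega(\alpha^6 m)$; Mader then yields an $\Omega(\alpha^6 m)$-connected subgraph $K'$ on some $U_1\subset[m]$. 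One removes a set $U_0\subset U_1$ of size $\ell$ and sets $B_0=\{b_i:i\notin U_0\}$, while $B_1$ is the set of $v\in B\setminus\{b_1,\dots,b_m\}$ with at least $\ell$ neighbours among $\{a_i:i\in U_1\setminus U_0\}$. Given any $U\subset B_1$ of size $\ell$, one first matches $U$ into distinct indices $d_1,\dots,d_\ell\in U_1\setminus U_0$, then uses Menger in $K'$ to find $\ell$ disjoint paths from $\{d_1,\dots,d_\ell\}$ to $U_0$; rotating the matching along these paths produces a perfect matching from $A$ to $B_0\cup U$. The crucial point you were missing is that $B_0$ must come from an initial matching, and the connectivity lives in the swap graph on $A$, not in $H$ itself.
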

\begin{proof}
Label the vertices of $A$ as $a_1,\ldots,a_m$. Note that, using Jensen's inequality,
\begin{equation}
\label{eqn:sum}
\sum_{\{i,j\}\in \binom{[m]}{2}}|N(a_i)\cap N(a_j)|=\sum_{b\in B}\binom{d_H(b)}{2}\geq n\cdot \binom{e(H)/n}{2}\geq n\cdot \binom{(m\cdot \alpha n)/n}{2}\geq \frac{\alpha^2m^2n}{4}.
\end{equation}
Now, the pairs $\{i,j\}\in \binom{[m]}{2}$ with $|N(a_i)\cap N(a_j)|< \alpha^2n/4$ collectively contribute at most $\alpha^2m^2n/8$ to the sum on the left hand side of \eqref{eqn:sum}, so that the sum over the other terms must be at least $\alpha^2m^2n/8$. Therefore, as $|N(a_i)\cap N(a_j)|\leq n$ for each $\{i,j\}\in \binom{[m]}{2}$, there are at least ${\alpha^2m^2n/8n}$ such terms. That is, letting $I$ be the set of pairs $\{i,j\}\in \binom{[m]}{2}$ with $|N(a_i)\cap N(a_j)|\geq \alpha^2n/4$, we have $|I|\geq \alpha^2m^2/8$.

For each $i=1,\ldots,m$ in turn, chose $b_i$ uniformly at random from $N(a_i)\setminus \{b_1,\ldots,b_{i-1}\}$, noting that this is possible as $d_H(a_i)\geq \alpha n>m$. Note further that, for each $\{i,j\}\in I$ with $i<j$, given any choices for $b_1,\ldots,b_{i-1}$, we have, as $m\leq \alpha^2n/8$, that
\[
\P(b_i\in N(a_j))\geq \frac{|N(a_i)\cap N(a_j)|-m}{n}\geq \frac{\alpha^2}{8},
\]
and, similarly, given any choices for $b_1,\ldots,b_{j-1}$, we have $\P(b_j\in N(a_i))\geq \alpha^2/8$. Therefore, for each $\{i,j\}\in I$, we have $\P(b_i\in N(a_j)\text{ and }b_j\in N(a_i))\geq \alpha^4/64$.

Let $K$ be the auxiliary graph with vertex set $[m]$, where $ij$ is an edge  exactly if $b_i\in N(a_j)$ and $b_j\in N(a_i)$.
Now,
\[
\E(e(K))\geq \sum_{\{i,j\}\in I}\P(b_i\in N(a_j)\text{ and }b_j\in N(a_i))\geq |I|\cdot  \alpha^4/64\geq \alpha^6m^2/512.
\]

Therefore, there is some choice of $b_1,\ldots,b_m$ for which $e(K)\geq \alpha^6m^2/512$, so that $K$ has average degree at least $\alpha^6m/256$. Now, by Theorem~\ref{thm:mader}, we can take a subgraph $K'\subset K$ which is $(\alpha^6m/10^4)$-connected. Let $U_1=V(K')$ and note that $|U_1|\geq \alpha^6m/10^4\geq 10\ell/\alpha\geq 10\ell$.
Pick a subset $U_0\subset U_1$ of size $\ell$. Let $B_0$ be the set of $b_i$'s for $i\not\in U_0$ and let $A_0$ be the set of $a_j$'s for $j\in U_1\setminus U_0$. Let $B_1$ be the set of vertices $v\in B\setminus \{b_1,\ldots,b_m\}$ such that $d_H(v, A_0)\geq \ell$. We claim that $B_0$ and $B_1$ have the desired properties.

First note that $|A_0|= |U_1|-\ell\geq \alpha^6m/10^4-\ell\geq 9\alpha^6m/10^5$. Then, as $d_H(v)\geq \alpha n$ for each $v\in A_0$, we have
\[
|B_1|\geq \frac{e(A_0,B\setminus \{b_1,\ldots,b_m\})-\ell n}{m}\geq \frac{9\alpha^6m/10^5\cdot (\alpha n-m)-\ell n}{m}\geq \frac{\alpha^7n}{10^5},
\]
as required. Now, take an arbitrary subset $U\subset B_1$ with size $\ell$, and label $U$ as $\{u_1,\ldots,u_\ell\}$. As $u_1,\ldots,u_\ell\in B_1$, we can take distinct $d_1,\ldots,d_\ell\in U_1\setminus U_0$ such that $u_i\in N_H(a_{d_i})$ for each $i\in [\ell]$.
As $K'$ is $(\alpha^6m/10^4)$-connected, and hence $\ell$-connected, by Theorem~\ref{thm:maxflow} there is a set of $\ell$ vertex-disjoint paths in $K'$ between the vertex disjoint sets $\{d_1,\ldots,d_\ell\}$ and $U_0$. Say these paths are $P_1,\ldots,P_\ell$, and direct their edges from $\{d_1,\ldots,d_\ell\}$ to $U_0$.

We are now ready to define an injection $\phi:A\to B_0\cup U$ so that $\{a\phi(a):a\in A\}$ is a matching in $H$.
For each $i\in [m]$, do the following to define $\phi(a_i)$.
\begin{itemize}
\item If $i\in [m]\setminus (\cup_{j\in [\ell]}V(P_j))$, then let $\phi(a_i)=b_i$.
\item If $i\in \cup_{j'\in [\ell]}V(P_{j'})$, and there is some $j$ such that $\vec{ji}\in \cup_{j'\in [\ell]}E(P_{j'})$, then let $\phi(a_i)=b_j$. (Note that, as $ij\in E(K)$, we have $b_j\in N_H(a_i)$.)
\item If $i\in \cup_{j'\in [\ell]}V(P_{j'})$, and there is no $j$ such that $\vec{ji}\in \cup_{j'\in [\ell]}E(P_{j'})$, then, using that $i\in \{d_1,\ldots,d_\ell\}$, find $j\in [\ell]$ such that $d_j=i$ and let $\phi(a_i)=u_j$. (Note that, by the choice of the indices $d_j$, we have that $u_j\in N_H(a_i)$.)
\end{itemize}

Note that, for each $b_j, j\in [m]$ that is not an end vertex of any of the directed paths $P_i$, $i\in [\ell]$, we have $\phi(a_{k})=b_j$ for exactly one value of $k\in [m]$, whereas for the end vertices there is no such value of $k$. The end vertices of these paths are exactly the vertices in $U_0$, and therefore every vertex in $B_0$ appears as some $\phi(a_i)$ exactly once. Furthermore, under $\phi$, the starting vertices $j$ of the paths $P_i$, $i\in [\ell]$, have the corresponding vertices $a_j$ mapped injectively by $\phi$ to $U=\{u_1,\ldots,u_\ell\}$.
Therefore, $\{a\phi(a):a\in A\}$ is a matching from $A$ to $B_0\cup U$.
\end{proof}

We now give the form of our colour absorption lemma that we use for spanning trees (though we state it more generally). As noted in Section~\ref{sketch}, the colour absorption we use for factors is very similar, but altered so that it can be used for Theorem~\ref{thm:monocopies}, and we implement this in Section~\ref{sec:factors}.

\begin{lemma}\label{lem:colourabsorb:general}
Let $1/n\ll \gamma \ll \beta \ll \alpha$ and $\delta\geq 0$. Let $H$ be a graph with $e(H)= \beta n$ and such that any $n$-vertex graph with minimum degree at least $\delta n$ contains a copy of $H$.
Let $\fG$ be a graph collection on $[n]$ with $\delta(\fG)\geq (\delta+\alpha)n$ and $|\fG|=m\geq \alpha n$.

Then, there is a copy $S$ of $H$ in $\cup_{i\in [m]}G_i$ and disjoint sets $A,C\subset [m]$, with $|A|=e(H)-\gamma n$ and $|C|\geq 10\beta m$ such that the following property holds.
Given any subset $B\subset C$ with $|B|=\gamma n$, there is a rainbow colouring of $S$ in $\fG$ using colours in $A\cup B$.
\end{lemma}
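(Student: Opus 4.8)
The plan is to reduce the statement to an application of Lemma~\ref{lem:absorb} applied to a suitable auxiliary bipartite graph built from a random rainbow embedding of $H$, following the switching idea sketched in Section~\ref{sketch}. First I would set up the embedding. Let $G$ be the graph on $[n]$ whose edges are those appearing in at least $\alpha m/4$ of the $G_i$; by Proposition~\ref{prop:alsodirac}, $\delta(G)\geq (\delta+\alpha/2)n\geq \delta n$, so $G$ contains a copy $S$ of $H$, and each edge of $S$ has at least $\alpha m/4$ available colours in $\fG$. Now colour $E(S)$ greedily, or better, \emph{uniformly at random} edge by edge: process the $\beta n$ edges in some order, and for each edge pick its colour uniformly among the available colours in $\fG$ not yet used on $S$; since $\alpha m/4 > \beta n = e(S)$ this always succeeds and produces a rainbow colouring $\chi$ of $S$. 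Write $c_1,\dots,c_{\beta n}$ for the colours used, one per edge $e_1,\dots,e_{\beta n}$ of $S$.

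\medskip

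Next I would build the auxiliary bipartite graph to which Lemma~\ref{lem:absorb} is applied. Let $A$ be the set of ``used'' colours $\{c_1,\dots,c_{\beta n}\}$ and let $B'=[m]\setminus A$ be the unused colours, so $|B'|=m-\beta n\geq m/2$. Put an edge between $c_k\in A$ and $j\in B'$ whenever $e_k\in E(G_j)$, i.e.\ the edge $e_k$ of $S$ could be recoloured with colour $j$. Since $e_k$ lies in at least $\alpha m/4$ graphs $G_i$ and at most $\beta n\leq \alpha m/8$ of these are used colours, $e_k$ has at least $\alpha m/8$ neighbours in $B'$; so every vertex of $A$ has degree at least $(\alpha/8)|B'|$ (after adjusting constants, degree at least $\alpha'|B'|$ with $\alpha'=\alpha/8$, say, using $|B'|\le m$). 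This is exactly the hypothesis of Lemma~\ref{lem:absorb} with the roles $m\leftarrow |A|=\beta n$, $n\leftarrow |B'|$, $\ell\leftarrow \gamma n$, and $\alpha\leftarrow \alpha'$: one checks $\gamma n\leq (\alpha')^7|A|/10^5$ and $(\alpha')^2|B'|\geq 8|A|$ both hold because $\gamma\ll\beta\ll\alpha$ and $m\geq\alpha n$. Lemma~\ref{lem:absorb} then yields disjoint $B_0, B_1\subset B'$ with $|B_0|=|A|-\gamma n=\beta n-\gamma n=e(H)-\gamma n$ and $|B_1|\geq (\alpha')^7|B'|/10^5\geq 10\beta m$ (again using $|B'|\ge m/2$ and $\gamma\ll\beta\ll\alpha$, enlarging the constant $10^5$ if necessary), such that for every $U\subset B_1$ with $|U|=\gamma n$ there is a perfect matching between $A$ and $B_0\cup U$ in this auxiliary graph.

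\medskip

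Finally I would translate the matching back into a recolouring of $S$. Fix $B\subset C:=B_1$ with $|B|=\gamma n$, and take the guaranteed perfect matching $M$ between $A$ and $B_0\cup B$. I claim recolouring each edge $e_k$ of $S$ by the colour $M(c_k)\in B_0\cup B$ gives a rainbow colouring of $S$ using exactly the colours in $B_0\cup B$: the map $e_k\mapsto M(c_k)$ is a bijection from $E(S)$ onto $B_0\cup B$ because $M$ is a perfect matching from $A$ to $B_0\cup B$ and $k\mapsto c_k$ is a bijection $E(S)\to A$; and each recoloured edge is legal because $c_kM(c_k)\in M$ means $e_k\in E(G_{M(c_k)})$ by construction of the auxiliary graph. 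Setting $A:=B_0$ (overloading notation) and $C:=B_1$, we get $|A|=e(H)-\gamma n$, $|C|\geq 10\beta m$, $A\cap C=\emptyset$ since $B_0,B_1$ are disjoint, and the required absorption property with $B\cup A\subset B_0\cup B_1$ as colour set. (Note the lemma asks only that the colouring use colours \emph{in} $A\cup B$, not all of them; here it uses precisely $A\cup B=B_0\cup B$.)

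\medskip

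The main obstacle is not any single calculation but the bookkeeping that makes Lemma~\ref{lem:absorb} applicable with the right quantitative relationships: one must be slightly careful that the degree bound in the auxiliary bipartite graph is a genuine proportion of $|B'|$ (not of $m$), that $|B'|$ is comparable to $m$ so the final $|C|$ bound is in terms of $m$ as stated, and that the three inequalities $\ell\le \alpha^7 m/10^5$, $\alpha^2 n\ge 8m$, $\ell\ll$ everything, survive the substitution $m\leftarrow\beta n$, $n\leftarrow|B'|$. All of these follow comfortably from the hierarchy $1/n\ll\gamma\ll\beta\ll\alpha$ together with $m\geq\alpha n$, so the proof is essentially a matter of assembling the pieces; the genuinely creative content is entirely contained in Lemma~\ref{lem:absorb}, which has already been proved.
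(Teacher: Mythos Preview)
Your proof is correct and follows essentially the same approach as the paper: build an auxiliary bipartite graph recording which colours are available for each edge of $S$, then apply Lemma~\ref{lem:absorb}. The paper's version is slightly cleaner in that it skips your preliminary rainbow colouring of $S$ entirely---it applies Lemma~\ref{lem:absorb} directly to the bipartite graph with classes $E(S)$ and all of $[m]$ (rather than the unused colours $B'$), so your initial colouring serves only as a relabelling of $E(S)$ and can be dropped without loss.
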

\begin{proof}
Let $G$ be the graph with vertex set $[n]$, where $e\in \binom{[n]}{2}$ is an edge of $G$ exactly when $e\in E(G_i)$ for at least $\alpha m$ values of $i\in [m]$. Then, by Proposition~\ref{prop:alsodirac}, $\delta(G)\geq \delta n$, and, therefore, $G$ contains a copy of $H$, say $S$.

Let $K$ be the bipartite graph with vertex classes $E(S)$ and $[m]$, where $ei$ is an edge exactly if $e\in G_i$. Note that, for each $e\in E(S)$, as $e\in E(G)$, we have that $d_K(e)\geq \alpha m$. Then, as $\gamma\ll \beta\ll\alpha$, by Lemma~\ref{lem:absorb} with $n=m$, $m=\beta n$ and $\ell=\gamma n$, there are disjoint sets $A,C\subset [m]$ with $|A|=e(H)-\gamma n$ and $|C|\geq 10\beta m$, such that, for any set $B\subset C$ of size $\gamma n$ there is a perfect matching between $E(S)$ and $A\cup B$. Note that for such a matching $M$, the function $\phi:E(S)\to A\cup B$, defined by $e\phi(e)\in M$ for each $e\in E(S)$, gives a rainbow colouring of $S$ in $\fG$ using colours in~$A\cup B$, as required.
\end{proof}


\section{Transversal spanning trees}\label{sec:trees}
We now embed spanning trees in a rainbow fashion using \ref{step-1} to \ref{step-5} in the outline in Section~\ref{sketch}.
In Section~\ref{sec:treepart}, we give some simple results we use to partition trees into pieces in order to carry out these steps, as well as to prove our results for these steps. In Section~\ref{sec:fixedvertex}, we deduce a version of Koml\'os, S\'ark\"ozy and Szemer\'edi's tree embedding theorem in which one vertex of the tree is embedded to a pre-specified vertex. In Section~\ref{surpluscolours}, we embed a tree using a small but linear number of surplus colours, as motivated in the proof sketch in Section~\ref{sketch}.
In Section~\ref{sec:treecolourcover}, we embed a tree while using specific colours when we have many surplus vertices, meaning that the vertex set of the tree we are embedding is smaller than the vertex subset we are trying to embed the tree to. In Section~\ref{sec:treeproof} we then carry out the outline in Section~\ref{sketch} to prove Theorem~\ref{thm:rainbowkss}.


\subsection{Partitioning trees}\label{sec:treepart}

We will use the following simple proposition to divide a tree into subtrees (see, for example, \cite[Proposition~3.22]{randomspanningtree}).

\begin{proposition}\label{littletree} Let $n,m\in \N$ satisfy $1\leq m\leq n/3$. Given any $n$-vertex tree~$T$ containing a vertex $t\in V(T)$, there are two edge-disjoint trees $T_1,T_2\subset T$ such that $E(T_1)\cup E(T_2)=E(T)$, $t\in V(T_1)$ and $m\leq |T_2|\leq 3m$.
\end{proposition}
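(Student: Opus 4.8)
The plan is to grow the subtree $T_2$ greedily from a suitable starting vertex, using the standard fact that in a tree, removing a single vertex (or edge) leaves small components that can be controlled.

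\medskip

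\textbf{Approach.} First I would root $T$ at the given vertex $t$. For each edge $e=xy$ of $T$ with, say, $y$ the endpoint farther from the root, let $C_e$ denote the component of $T-e$ containing $y$ (equivalently, the set of descendants of $y$, together with $y$). The sizes $|C_e|$ range over the integers, with $|C_e|=1$ when $y$ is a leaf and $|C_e|$ close to $n$ when $e$ is incident to $t$. The key point is that as we walk down any root-to-leaf path, these sizes decrease, but not necessarily by only $1$ at each step if a vertex has several children. To handle the possible jumps, I would instead use the following: pick a vertex $v\neq t$ maximising $|C_e|$ subject to $|C_e|\le 3m$, where $e$ is the edge from $v$ to its parent. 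If no such vertex exists then every edge incident to $t$ already has $|C_e|>3m$... but each child subtree of $t$ has fewer than $n$ vertices, and together they partition $V(T)\setminus\{t\}$, so this cannot fail to produce something of size at most $3m$ unless $m$ is tiny — here the bound $m\le n/3$ and a short case analysis rescue us.

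\medskip

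\textbf{Key steps.} (1) Root $T$ at $t$ and define $C_e$ for each edge as above. (2) Choose $v$ to be a deepest vertex (or any vertex) such that the subtree $T_v$ hanging below $v$ has $|T_v|\ge m$; such a $v$ exists because $T$ itself has $n\ge 3m\ge m$ vertices (taking $v$ to be a child of $t$ in a largest branch if needed, and descending). (3) Argue $|T_v|\le 3m$: if $v$ has children $w_1,\dots,w_k$, then by the minimality/deepness of $v$ each $T_{w_i}$ has $|T_{w_i}|<m$, so $|T_v|=1+\sum_i|T_{w_i}|$; this alone only gives a weak bound, so instead I would choose $v$ more carefully — scan down from $t$, always moving into a child whose subtree has size $\ge m$, stopping at the first vertex $v$ all of whose children have subtree size $<m$. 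Then $|T_v|\ge m$ by the choice to move into $v$, and since one can also show $|T_v|$ is at most $m$ plus the size of one child subtree plus... — cleaner: stop at the first vertex $v$ on this descent with $|T_v|\le 3m$; since the parent $u$ of $v$ had $|T_u|>3m$ and we moved into the largest child, $|T_v|\ge (|T_u|-1)/\Delta$ is too weak, so I would instead just take $v$ to be, along this greedy descent, the last vertex with $|T_v|\ge m$: its chosen child has subtree size $<m$, but we need $|T_v|\le 3m$, which follows since the previous vertex on the descent had... — (4) Set $T_2=T_v$ and $T_1=T-(V(T_v)\setminus\{v\})$, so $T_1,T_2$ are edge-disjoint, $E(T_1)\cup E(T_2)=E(T)$, $t\in V(T_1)$, and $m\le |T_2|\le 3m$.

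\medskip

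\textbf{Main obstacle.} The delicate part is controlling the \emph{upper} bound $|T_2|\le 3m$ when a vertex has large degree: a single descent step could overshoot from below $m$ to well above $3m$ in one move if one child subtree is huge. The resolution is that if the child subtree $T_w$ of the current vertex has $|T_w|>3m$ we simply recurse into $w$ (it still has size $\ge m$), so overshooting only happens at a vertex all of whose children have subtree size $\le 3m$; at such a vertex $v$, if additionally $|T_v|\ge m$ we are done, and otherwise we have passed below $m$, meaning the parent had size $\ge m$ but at most... — the clean statement is: descend greedily into any child with subtree size $>m$; this terminates at a vertex $v$ with $|T_v|>m$ (if we ever entered it) but all children of $v$ have subtree size $\le m$, hence $|T_v|\le 1+\Delta\cdot m$ is again too crude. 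The genuinely correct fix, which I would adopt, is the classical one: among all edges $e$ with $|C_e|\ge m$, choose one minimising $|C_e|$; then every edge $e'$ strictly inside $C_e$ (descending from the endpoint of $e$) has $|C_{e'}|<m$, so $C_e$ is the disjoint union of $\{v\}$ (its top vertex) and subtrees each of size $<m$, and — crucially — at most one of these can push the total over, giving $|C_e|<2m\le 3m$ once one checks that if $|C_e|\ge 2m$ we could have removed an edge lower down with $m\le|C_{e'}|<|C_e|$, contradicting minimality. Setting $T_2$ to be this $C_e$ together with its top edge, and $T_1$ the rest, completes the proof; the only real work is this minimality argument, and the hypothesis $m\le n/3$ ensures the top vertex of $C_e$ is not forced to be $t$ (so $t\in V(T_1)$).
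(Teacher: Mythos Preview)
The paper does not actually prove this proposition; it only cites an external reference, so there is no in-paper argument to compare against.

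Your ``genuinely correct fix'' at the end has a real gap. Having chosen an edge $e$ minimising $|C_e|$ subject to $|C_e|\ge m$, you claim that if $|C_e|\ge 2m$ then some lower edge $e'$ satisfies $m\le|C_{e'}|<|C_e|$, contradicting minimality. But minimality says precisely that \emph{every} lower edge has $|C_{e'}|<m$, so no such $e'$ can exist. Concretely: let $t$ have a single child $v$ and attach $100$ leaves to $v$, so $n=102$; take $m=30\le n/3$. The only edge with $|C_e|\ge m$ is $tv$, and $|C_{tv}|=101>3m=90$, so your $T_2$ violates the upper bound. (For the star centred at $t$ with $m\ge 2$ the situation is worse still: there is no edge with $|C_e|\ge m$ at all.) You correctly identified the obstacle --- a high-degree vertex can make every full rooted subtree either too small or too large --- but the resolution is not a cleverer choice of edge. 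The missing ingredient is that $T_2$ need not be an entire rooted subtree: once you locate a deepest vertex $v$ with $|T_v|\ge m$ (so every child subtree of $v$ has size $<m$), greedily add child branches of $v$ one at a time until their union together with $v$ first reaches $m$ vertices; since each branch contributes fewer than $m$ vertices, the resulting tree has fewer than $2m\le 3m$ vertices. Take this as $T_2$ and let $T_1$ carry the remaining edges of $T$; both trees contain $v$, and $T_1$ contains $t$ (trivially if $v=t$, and via the $v$--$t$ path otherwise).
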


The first implication of this we will use is that a tree can be divided into many pieces of roughly equal size, as follows.

\begin{corollary}\label{treedecomp}
Let $n,m\in \N$ satisfy $m\leq n$. Given any $n$-vertex tree~$T$, there is some $\ell\in \N$ and edge-disjoint trees $T_1,\ldots,T_\ell \subset T$ such that $\cup_{i\in [\ell]}E(T_i)=E(T)$ and, for each $i\in [\ell]$, we have $m\leq |T_i|\leq 4m$.
\end{corollary}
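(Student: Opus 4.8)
The statement is Corollary~\ref{treedecomp}, asserting that an $n$-vertex tree $T$ decomposes into edge-disjoint subtrees each of order between $m$ and $4m$. The plan is to repeatedly peel off pieces using Proposition~\ref{littletree}. First I would handle the trivial case $m \geq n/3$ (or more carefully $n < 3m$ or even $n \le 4m$), where we may simply take $\ell = 1$ and $T_1 = T$, since then $m \le |T| = n \le 4m$ provided $n \ge m$; if $n < m$ the hypothesis $m \le n$ fails, so there is nothing to check. So assume $n \geq 4m$, say, so that in particular $n \geq 3m$ and Proposition~\ref{littletree} is applicable.

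The main loop: maintain a tree $T'$ (initially $T' = T$) together with the collection of pieces already removed. At each stage, if $|T'| \le 4m$, stop and add $T'$ as the final piece (it has at least\ldots — we need to be careful here, see below). Otherwise $|T'| > 4m \ge 3m$, so pick any vertex $t \in V(T')$ and apply Proposition~\ref{littletree} to $T'$ with parameter $m$: this yields edge-disjoint trees $T_1', T_2' \subset T'$ with $E(T_1') \cup E(T_2') = E(T')$, $t \in V(T_1')$, and $m \le |T_2'| \le 3m$. Record $T_2'$ as a removed piece (its order lies in $[m,3m] \subseteq [m,4m]$), and replace $T'$ by $T_1'$. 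Since $T_1'$ and $T_2'$ share exactly the vertex $t$ (they are edge-disjoint subtrees of a tree whose edge sets partition $E(T')$, so they meet in a connected — hence single-vertex, since disjoint edge sets — set), we have $|T_1'| = |T'| - |T_2'| + 1 \le |T'| - m + 1 < |T'|$, so the process terminates. Each iteration produces a valid piece and strictly decreases $|T'|$, so after finitely many steps we reach $|T'| \le 4m$.

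The one genuine subtlety — and the step I expect to be the main obstacle — is the \textbf{lower bound on the last piece}: when the loop stops with $|T'| \le 4m$, we need $|T'| \ge m$ as well. This is not automatic, because the final piece could be small (even a single edge or vertex). The fix is to not stop as soon as $|T'| \le 4m$, but rather to ensure the penultimate removal leaves a piece of size in the right range: specifically, run the loop while $|T'| > 4m$; when $|T'| \le 4m$ the remaining tree $T'$ has $|T'| \ge 1$, but if $|T'| < m$ we instead merge it into the previously recorded piece. Merging a tree of order $< m$ with a recorded piece of order in $[m,3m]$ along their common vertex — note $T'$ after the last peel is $T_1'$, which shares vertex $t$ with the just-recorded $T_2'$ — yields a tree of order $< m + 3m = 4m$ and $\ge m$, so it still lies in $[m,4m]$; and the union of edge sets is unaffected. (Alternatively, and perhaps cleaner: run the loop while $|T'| > 2m$; then the final leftover has $m < |T'| \le 2m$ automatically, since each peel removes between $m-1$ and $3m-1$ edges and we only peel when $|T'|>2m\ge 3m$\ldots — one must check the arithmetic, but choosing the stopping threshold at $2m$ rather than $4m$ gives slack so that a single extra peel can always bring $|T'|$ into $[m,2m] \subseteq [m,4m]$.) I would go with the threshold-plus-merge argument as it is the most robust. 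Finally, observe $\cup_i E(T_i) = E(T)$ holds throughout since each peel splits $E(T')$ exactly, and edge-disjointness is preserved since removed pieces are pairwise edge-disjoint (each new piece's edges come from the current $E(T')$, which excludes all previously removed edges). This completes the proof; all that remains is routine bookkeeping of the orders, which I would not belabour.
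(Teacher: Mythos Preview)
Your argument is correct and essentially the same as the paper's, which also repeatedly applies Proposition~\ref{littletree}; the paper simply packages the iteration as a maximality argument (take a decomposition into edge-disjoint subtrees of size $\geq m$ with $\ell$ maximal, then split any piece of size $> 4m$ via Proposition~\ref{littletree} to contradict maximality). Incidentally, your merge step is never actually needed: if you peel only while $|T'| > 4m$, then after each peel $|T_1'| = |T'| - |T_2'| + 1 > 4m - 3m + 1 > m$, so the final leftover automatically lies in $(m,4m]$ --- this is exactly the arithmetic that makes the paper's maximality version tick.
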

\begin{proof}
Let $\ell$ be the largest integer for which there are edge-disjoint trees $T_1,\ldots,T_\ell \subset T$ such that $\cup_{i\in [\ell]}E(T_i)=E(T)$ and, for each $i\in [\ell]$, we have $|T_i|\geq m$. If $|T_i|>4m$, for any $i\in [\ell]$, then by Proposition~\ref{littletree} there are two edge-disjoint trees $S_1,S_2\subset T_i$ such that $E(S_1)\cup E(S_2)=E(T_i)$ and $m\leq |S_2|\leq 3m$. Note that $|S_1|=|T_i|-|S_2|+1\geq m$. Replacing $T_i$ in $T_1,\ldots,T_\ell$ with $S_1$ and $S_2$ gives a sequence of edge-disjoint trees which contradicts the maximality of $\ell$.
\end{proof}
When embedding a spanning tree $T$, we will divide it into subtrees $T_1$, $T_2$, $T_3$ and $T_4$, before embedding them in that order in the steps laid out in Section~\ref{sketch}. This division will follow from the following proposition.
\begin{proposition}\label{prop:threesmall}
Let $n,m_1,m_3,m_4\in \N$ satisfy $1\leq m_1,m_3,m_4\leq n/10$. Given any tree $T$ with $n$ vertices, we can find four edge-disjoint trees $T_1$, $T_2$, $T_3$ and $T_4$ such that $E(T)=\cup_{i\in [4]}E(T_i)$, $m_i\leq |T_i|\leq 3m_i$ for each $i\in \{1,3,4\}$, and $T_1\cup T_2$ and $T_1\cup T_2\cup T_3$ are connected.
\end{proposition}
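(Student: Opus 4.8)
The plan is to apply Proposition~\ref{littletree} three times to successively peel off the trees $T_4$, $T_3$, and $T_1$ from $T$, leaving $T_2$ as the remainder, and to manage the vertex that is kept fixed in each application so that the final pieces have the required connectivity. First I would fix an arbitrary vertex $t\in V(T)$. Apply Proposition~\ref{littletree} to $T$ with the parameter $m_4$ and the distinguished vertex $t$: this gives edge-disjoint trees $T'$ and $T_4$ with $E(T')\cup E(T_4)=E(T)$, $t\in V(T')$, and $m_4\le |T_4|\le 3m_4$; this is valid since $m_4\le n/10\le n/3$.

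Next I would apply Proposition~\ref{littletree} to $T'$ with the parameter $m_3$ and the same distinguished vertex $t$ (note $|T'|=n-|T_4|+1\ge n-3m_4+1\ge 3m_3$ using $m_3,m_4\le n/10$, so the hypothesis $m_3\le |T'|/3$ holds). This yields edge-disjoint trees $T''$ and $T_3$ with $E(T'')\cup E(T_3)=E(T')$, $t\in V(T'')$, and $m_3\le |T_3|\le 3m_3$. Finally I would apply Proposition~\ref{littletree} to $T''$ with the parameter $m_1$ and distinguished vertex $t$ (again $|T''|=|T'|-|T_3|+1\ge n-3m_4-3m_3+1\ge 3m_1$ since all three of $m_1,m_3,m_4$ are at most $n/10$), obtaining edge-disjoint trees $T_2$ and $T_1$ with $E(T_1)\cup E(T_2)=E(T'')$, $t\in V(T_2)$, and $m_1\le |T_1|\le 3m_1$. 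Then $E(T)=E(T_1)\cup E(T_2)\cup E(T_3)\cup E(T_4)$ and the four trees are pairwise edge-disjoint, and $m_i\le |T_i|\le 3m_i$ for $i\in\{1,3,4\}$ as required.

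It remains to check that $T_1\cup T_2$ and $T_1\cup T_2\cup T_3$ are connected trees. By construction $E(T_1\cup T_2)=E(T'')$ and $T''$ is a (connected) tree; since $T_1$ and $T_2$ are edge-disjoint with union $T''$, the graph $T_1\cup T_2$ equals $T''$ as a graph and hence is a connected tree. Similarly $E(T_1\cup T_2\cup T_3)=E(T'')\cup E(T_3)=E(T')$, and $T'$ is a connected tree, so $T_1\cup T_2\cup T_3$ equals $T'$ and is a connected tree. (Here I am using that when Proposition~\ref{littletree} splits a tree $X$ into edge-disjoint $X_1,X_2$ with $E(X_1)\cup E(X_2)=E(X)$, the union of $X_1$ and $X_2$ as a subgraph is exactly $X$.) This completes the proof.

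The only mild subtlety — and the step I would be most careful about — is bookkeeping the vertex counts to ensure each invocation of Proposition~\ref{littletree} satisfies its hypothesis $m\le |X|/3$: after removing $T_4$ and $T_3$ the remaining tree $T''$ still has at least $n-3m_4-3m_3+1\ge (2/5)n+1$ vertices, which comfortably exceeds $3m_1$. Everything else is a direct chaining of the proposition together with the trivial observation that edge-disjoint subtrees whose edge sets partition a tree reconstitute that tree.
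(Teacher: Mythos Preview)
Your proof is correct and follows essentially the same approach as the paper: three successive applications of Proposition~\ref{littletree}, keeping track of a distinguished vertex to guarantee the nesting needed for connectivity. The only cosmetic difference is the order in which the pieces are peeled off (you remove $T_4$, then $T_3$, then $T_1$; the paper removes $T_1$ first, then $T_4$, then splits off $T_3$), and your ordering makes the connectivity check marginally cleaner since $T_1\cup T_2$ and $T_1\cup T_2\cup T_3$ literally equal the intermediate trees $T''$ and $T'$.
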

\begin{proof}
Using Proposition~\ref{littletree}, find edge-disjoint trees $T'$ and $T_1$ such that $E(T)=E(T')\cup E(T_1)$ and $m_1\leq |T_1|\leq 3m_1$. Let $t_1$ be the vertex in both $T_1$ and $T'$. Noting that $|T'|\geq 7n/10\geq 3m_4$, using Proposition~\ref{littletree} find edge-disjoint trees $T''$ and $T_4$
 such that $E(T')=E(T'')\cup E(T_4)$, $m_4\leq |T_4|\leq 3m_4$ and $t_1\in V(T'')$. Noting that $|T''|\geq 4n/10\geq 3m_3$, using Proposition~\ref{littletree}, find edge-disjoint trees $T_2$ and $T_3$ such that $E(T')=E(T_2)\cup E(T_3)$, $m_3\leq |T_3|\leq 3m_3$ and $t_1\in V(T_2)$.

Note that, as $t_1\in V(T_1)\cap V(T_2)$, the intersection $T_1\cup T_2$ is connected, and as $T_2\cup T_3=T''$, $T_1\cup T_2\cup T_3$ is also connected. Therefore, the trees  $T_1$, $T_2$, $T_3$ and $T_4$ satisfy the required conditions.
\end{proof}


\subsection{Embedding trees with a fixed vertex}\label{sec:fixedvertex}

We need a simple strengthening of Koml\'os, S\'ark\"ozy and Szemer\'edi's tree embedding theorem in which we have the extra condition that one vertex of the tree is embedded to a pre-specified vertex in the graph, as follows.

\begin{theorem}\label{thm:treeuncoloured}
Let $1/n\ll c \ll \alpha$. Let $G$ be an $n$-vertex graph with $\delta(G)\geq (1/2+\alpha)n$. Let $T$ be an $n$-vertex tree with $\Delta(T)\leq cn/\log n$. Let $t\in V(T)$ and $v\in V(G)$. Then, $G$ contains a copy of $T$ with $t$ copied to $v$.
\end{theorem}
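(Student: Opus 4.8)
\textbf{Proof proposal for Theorem~\ref{thm:treeuncoloured}.}
The plan is to deduce this from the Koml\'os--S\'ark\"ozy--Szemer\'edi theorem (in its standard form, without the fixed vertex) by a small surgery on the tree and a short preliminary greedy embedding step. The idea is that a bounded-degree tree, after deleting a bounded-size neighbourhood of $t$, still has a minimum-degree-robust structure, so we can first place a small rooted subtree containing $t$ by hand with $t\mapsto v$, and then invoke the black-box theorem to finish.

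More concretely, first I would use Proposition~\ref{littletree} (with a suitable small $m$, say $m=\sqrt{n}$, or even just $m=1$ together with a BFS argument) to split $T$ at $t$ into a small subtree $T_0\ni t$ of size $o(n)$ and the remainder; but since we only need to control the image of the single vertex $t$, it is cleaner to take $T_0$ to be $t$ together with its children in a rooting of $T$ at $t$, of which there are at most $\Delta(T)\le cn/\log n = o(n)$. Embed $t$ to $v$, and embed the children of $t$ greedily to distinct neighbours of $v$ in $G$: this is possible because $\delta(G)\ge (1/2+\alpha)n$ and we only need $\Delta(T)+1 = o(n)$ vertices. Now the remaining forest $T - t$ consists of the subtrees $T^{(1)},\dots,T^{(d)}$ hanging off the children of $t$, each already rooted at its (embedded) top vertex. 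The task reduces to extending the partial embedding to all of $T$.

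To carry out the extension, I would apply the KSS theorem to the graph $G' = G - v - (\text{images of children of }t)$, which has $n - 1 - d = n - o(n)$ vertices and still satisfies $\delta(G')\ge (1/2+\alpha/2)n \ge (1/2+\alpha/2)|G'|$ after discarding $o(n)$ vertices; but a direct single application does not respect the prescribed roots. The standard fix is to modify the tree: attach each $T^{(i)}$ by a path of length one to a single new apex vertex to reconnect $T-t$ into a tree, or — more simply — use a \emph{rooted} version of KSS. Since the excerpt only gives us the plain KSS theorem as a black box, the honest route is to prove the rooted version by the same absorption/embedding philosophy used later in the paper: reserve the images of the children of $t$ as the roots, and embed the rest of $T$ into $G$ using the standard KSS machinery, noting that having $o(n)$ prescribed vertices does not affect the argument because KSS embeds trees layer-by-layer from an arbitrary root and the minimum degree condition is robust to deleting $o(n)$ vertices. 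In the write-up I would either cite a rooted form of KSS directly or spell out that the KSS proof gives this for free.

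The main obstacle is exactly this gap between the plain statement we are handed and the rooted statement we need: if the black box genuinely only gives an unrooted embedding, one must either (a) quote the rooted refinement from the literature, or (b) reprove enough of KSS to see that a bounded number of prescribed vertex-images can be fixed in advance. Given the paper's stated philosophy of using the classical results as a black box ``as much as possible'', I expect the intended argument is (a) — invoking a known rooted/stability version — combined with the trivial greedy step above to reduce to it; the calculations (that deleting $o(n)$ vertices preserves the $(1/2+\alpha/2)n$ degree bound, and that $\Delta(T)+1 = o(n) \ll \alpha n$ vertices can be placed greedily) are routine and I would not grind through them.
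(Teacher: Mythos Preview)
Your proposal has a genuine gap at the crucial step. After you embed $t$ at $v$ and the children of $t$ greedily, you are left with $d=d_T(t)$ rooted subtrees $T^{(1)},\dots,T^{(d)}$ whose roots are already placed, and you must embed them so that together they \emph{exactly} cover $V(G)\setminus\{v,\text{images of children}\}$. This is not ``rooted KSS'': it is a spanning problem with multiple prescribed roots and a forced exact partition of the host into pieces of the correct (and potentially very lopsided) sizes. An almost-spanning rooted embedding of each piece would leave $\Theta(\eps n)$ vertices uncovered in each piece with no mechanism for absorbing them; a single application of KSS to some reconnected tree does not respect the prescribed roots (and your apex construction simply rebuilds $T$ itself). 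Your claim that the KSS proof ``gives this for free because it embeds layer-by-layer from an arbitrary root'' is not accurate: the spanning case of KSS relies on absorption, not on a greedy layer-by-layer embedding, and it is precisely this absorption step that your sketch omits.

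The paper's proof proceeds quite differently. It invokes two black boxes from~\cite{katha}: a rooted \emph{almost}-spanning embedding (Lemma~\ref{uncolouredkss}) and a vertex-absorption lemma for a small rooted subtree (Lemma~\ref{uncolouredvertexabsorption}). The tree is split via Proposition~\ref{littletree} into a small piece $T_1$ (of size $\Theta(\mu n)$) and the rest $T_2\ni t$; the absorption lemma is applied to $T_1$ to reserve a set $V_1$ that can absorb any $\eps n$ leftover vertices, and then $T_2$ is embedded almost-spanningly in the complement with $t$ sent to $v$. The leftover vertices are then absorbed into the copy of $T_1$. A short case analysis (on whether the shared vertex $s$ of $T_1,T_2$ equals $t$, is adjacent to $t$, or is further away) handles the interface. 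So the ``rooted refinement from the literature'' you gesture at in option~(a) is indeed what is used---but it is an absorption statement, not merely a rooted embedding, and that is the missing ingredient in your outline.
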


To record this precisely (and to prove it without regularity), we use the following two results from a recent proof of a directed version of Koml\'os, S\'ark\"ozy and Szemer\'edi's theorem by the first author and Kathapurkar~\cite{katha}. The results we cite below can be deduced directly from Theorem~2.1 and Theorem~2.2 in \cite{katha} by applying them to a directed graph formed by adding directed edges in both directions between any vertex pair connected by an edge in a graph $G$. Additionally, in the statement of Lemma~\ref{uncolouredvertexabsorption}, we record a strengthened version of Theorem~2.1 from~\cite{katha} in which the vertex $t$ is copied is specified beforehand. This strengthened statement follows from the proof of Theorem~2.1 in~\cite{katha} as the proof begins by embedding $t$ to an arbitrary vertex of $G$.

\begin{lemma}\label{uncolouredvertexabsorption}
Let $1/n\ll c \ll \eps \ll \mu  \ll \alpha$. Let $G$ be a graph with vertex set $[n]$ with $\delta(G)\geq (1/2+\alpha)n$. Let $T$ be a tree with $\mu n$ vertices and suppose $\Delta(T)\leq cn/\log n$. Let $t\in V(T)$ and $v\in [n]$.

Then, there exists some $A\subseteq [n]$ of size $(\mu-\eps)n$ with $v\in A$ such that for any set $V\subseteq [n]\backslash A$ of size $\eps n$, the induced subgraph $G[A\cup V]$ contains a copy of $T$ in which $t$ is copied to $v$.
\end{lemma}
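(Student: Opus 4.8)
The plan is to obtain this lemma as an essentially immediate consequence of Theorem~2.1 of~\cite{katha}, which is an absorption-type strengthening of the Koml\'os--S\'ark\"ozy--Szemer\'edi tree embedding theorem. First I would pass to the graph setting: given $G$ with $\delta(G)\geq(1/2+\alpha)n$, form the digraph $\vec G$ on $[n]$ by replacing every edge of $G$ with the two oppositely oriented arcs, so that $\vec G$ has minimum semidegree at least $(1/2+\alpha)n$; any copy of the (undirected) tree $T$ in $\vec G$, obtained by forgetting orientations, is a copy of $T$ in $G$. Applying Theorem~2.1 of~\cite{katha} to $\vec G$ and $T$ (with the same hierarchy $1/n\ll c\ll\eps\ll\mu\ll\alpha$) then yields a set $A\subseteq[n]$ with $|A|=(\mu-\eps)n$ such that every $V\supseteq A$ with $|V|=\mu n$ satisfies that $\vec G[V]$, and hence $G[V]$, contains a copy of $T$. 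This is exactly the conclusion of the lemma except for the two extra requirements: that $t$ is copied to the prescribed vertex $v$, and that $v\in A$.

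Both requirements follow by inspecting the proof of Theorem~2.1 in~\cite{katha} rather than using it as a black box. That proof begins by selecting a root vertex of $T$ and embedding it to an \emph{arbitrary} vertex of the host (di)graph, and all subsequent steps are carried out relative to this choice. I would therefore take the root to be $t$ and embed it to $v$; nothing else in the argument changes, and the resulting flexible set $A$ can be taken to contain $v$ (if the construction places $v$ outside $A$, simply add $v$ to $A$ and delete one other vertex of $A$, which shifts $\eps$ by $1/n$ and is absorbed by the hierarchy $1/n\ll\eps$). This gives the lemma.

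For completeness, the skeleton of the argument in~\cite{katha} that is being invoked is the standard absorption template adapted to trees: (i) reserve a small vertex set and set aside a collection of many tiny subtrees of $T$, building for the reserved set a \emph{distributive absorber} --- several candidate partial embeddings of the tiny subtrees such that, by a defect version of Menger's/Hall's theorem together with the expansion that $\delta(G)\geq(1/2+\alpha)n$ provides, an arbitrary completion of the host vertex set can be realised; (ii) embed the bulk of $T$ greedily in a breadth-first order, tracking for each not-yet-embedded vertex the number of admissible host vertices and using $\Delta(T)\leq cn/\log n$ to make a union bound over the $\mu n$ embedding steps succeed; (iii) use the leftover flexibility of the absorber to finish. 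The genuinely delicate point --- and the reason we cite rather than reprove --- is step (i): constructing a tree-absorber that is robust to \emph{inserting arbitrary extra vertices} (matching the $\eps n$ gap between $|A|$ and $|V|$), rather than the more familiar task of absorbing a pre-specified leftover set. Everything else reduces to the routine greedy embedding of bounded-degree trees in dense graphs, and the modification needed here --- fixing the image of a single vertex --- is free because that vertex can be the first one embedded.
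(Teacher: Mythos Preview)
Your proposal is correct and matches the paper's treatment essentially verbatim: the paper does not prove this lemma either but states that it follows from Theorem~2.1 of~\cite{katha} by passing to the digraph with both orientations on every edge, and that the extra clause fixing the image of $t$ to be $v$ holds because the proof in~\cite{katha} begins by embedding $t$ to an arbitrary vertex. Your additional remark ensuring $v\in A$ (by swapping one vertex if necessary) and the sketch of the absorption argument are not in the paper, but they are harmless elaborations.
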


\begin{lemma}\label{uncolouredkss}
Let $1/n\ll c\ll \eps, \alpha$. Let $G$ be an $n$-vertex graph with $\delta(G)\geq (1/2+\alpha)n$. Let $T$ be a tree with at most $(1-\eps)n$ vertices and suppose $\Delta(T)\leq cn/\log n$. Let $t\in V(T)$ and $v\in V(G)$.

Then, there exists a copy of $T$ in $G$ in which $t$ is copied to $v$.
\end{lemma}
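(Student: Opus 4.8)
The plan is to deduce this from the directed analogue of the Koml\'os--S\'ark\"ozy--Szemer\'edi theorem established in~\cite{katha}. Form the digraph $\vec{G}$ on $[n]$ by replacing each edge $uv\in E(G)$ with the two arcs $uv$ and $vu$; then the minimum out-degree and minimum in-degree of $\vec{G}$ are both at least $(1/2+\alpha)n$. Orient the edges of $T$ arbitrarily to obtain an oriented tree $\vec T$ with the same underlying tree, so $\vec T$ has at most $(1-\eps)n$ vertices and $\Delta(\vec T)=\Delta(T)\le cn/\log n$. Applying Theorem~2.2 of~\cite{katha}, in the form recorded here in which the image of the root $t$ may be prescribed (valid since the proof there begins by placing $t$ at an arbitrary vertex), yields a copy of $\vec T$ in $\vec G$ with $t$ copied to $v$; forgetting orientations gives the desired copy of $T$ in $G$ with $t$ at $v$. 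The reason a black box is needed is that the vertex-fixing feature together with the non-spanning regime is exactly what~\cite{katha} provides, and moreover that argument is regularity-free, which matters for the $cn/\log n$ bound to compose cleanly with the rest of our proof.

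If instead one wanted an argument using only the standard (undirected, spanning) theorem of~\cite{kss} together with Lemma~\ref{uncolouredvertexabsorption}, the natural route would run as follows. Choose constants $c\ll \eps'\ll\mu\ll\eps,\alpha$. If $|T|<\mu n$, apply Lemma~\ref{uncolouredvertexabsorption} directly and absorb into an arbitrary superset of $A$ using the slack. Otherwise, by an argument in the spirit of Proposition~\ref{littletree}, locate a pendant subtree $T_1\subseteq T$ with $t\in V(T_1)$, $|T_1|=\mu n$, meeting $\overline{T-T_1}=:T_2$ in a single interface vertex $w$. Apply Lemma~\ref{uncolouredvertexabsorption} to $G$, $T_1$, $t$, $v$ to get an absorbing set $A\ni v$ with $|A|=(\mu-\eps')n$. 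Next embed $T_2$ into $G$ while avoiding the currently unused vertices of $A$, using the $\eps n$ spare vertices and the spanning theorem applied to a suitably padded host tree; this places a copy $w'$ of $w$. Finally, complete a copy of $T_1$ inside $A\cup\{w'\}$ (of size $\mu n$) with $t\mapsto v$ via the absorbing property.

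The main obstacle for this self-contained route — and the point at which it breaks without extra work — is that Lemma~\ref{uncolouredvertexabsorption} only lets us prescribe the image of \emph{one} vertex of $T_1$, namely $t$, whereas the interface vertex $w$ must land exactly where the embedding of $T_2$ expects it. Controlling two vertices of $T_1$ simultaneously requires building the absorbing structure around the interface vertex as well, which is precisely the additional content inside the proof in~\cite{katha}. I would therefore present the statement as an immediate consequence of the directed theorem as in the first paragraph, and treat the second paragraph only as motivation for why that input is the natural one.
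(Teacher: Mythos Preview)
Your proposal is correct and follows exactly the paper's approach: the paper does not give a separate proof of this lemma but simply records that it is deduced from Theorem~2.2 of~\cite{katha} by replacing each edge of $G$ with a pair of opposite arcs, which is precisely your first paragraph. One small point: the paper's remark that the proof ``begins by embedding $t$ to an arbitrary vertex'' is made only for Theorem~2.1 (i.e.\ Lemma~\ref{uncolouredvertexabsorption}), which suggests that Theorem~2.2 in~\cite{katha} already allows the root image to be prescribed and no additional justification is needed there; your second and third paragraphs are unnecessary commentary.
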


Using these results, we can now prove Theorem~\ref{thm:treeuncoloured}.

\begin{proof}[Proof of Theorem~\ref{thm:treeuncoloured}]  Let $\eps$ and $\mu$ satisfy $c\ll \eps \ll \mu\ll \alpha$. Using Proposition~\ref{littletree}, take two edge-disjoint trees $T_1$ and $T_2$ such that $E(T)=E(T_1)\cup E(T_2)$, $t\in V(T_2)$, and $\mu n\leq |T_1|\leq 3\mu n$. Let $s$ be the vertex common to $T_1$ and $T_2$ and assume that $V(G)=[n]$. We now embed $T$ into $G$ differently according to whether $s=t$ (Case I), $st\in E(T)$ (Case II), or neither of these hold (Case III).

\medskip

\noindent\textbf{Case I: if $s=t$.} Using Lemma~\ref{uncolouredvertexabsorption}, there is some $V_1\subseteq [n]$ of size $|T_1|-2\eps n$ with $v\in V_1$ such that the following holds. For any set $V\subseteq [n]$ with $V_1\subset V$ and $|V|=|T_1|$, the induced subgraph $G[V]$ contains a copy of $T_1$ in which $t$ is copied to~$v$. Let $V_2=\{v\}\cup ([n]\setminus V_1)$, so that $|[n]\setminus V_2|\leq |V_1|\leq 3\mu n$. Therefore, as $\mu\ll\alpha$, for each $w\in [n]$, we have $d_G(w,V_2)\geq (1/2+\alpha/2)n\geq (1/2+\alpha/2)|V_2|$.
As $|V_2|\geq n-|V_1|\geq |T_2|+\eps n$, by Lemma~\ref{uncolouredkss}, there is a copy $S_2$ of $T_2$ in $G[V_2]$ in which $t$ is copied to $v$. Using the property of $V$, there is a copy, $S_1$ say, in $G-(V(S_2)\setminus \{v\})$ of $T_1$ in which $t$ is copied to $v$.
Combining $S_1$ and $S_2$ gives the desired copy of $T$.

\medskip

\noindent\textbf{Case II: if $s\neq t$ and $st\in E(T)$.} Let $T_1'=T_1$ and let $T_2',T_3'$ be the two components of $T_2-st$, labelling them so that $s\in V(T_2')$ and $t\in V(T_3')$. Let $v_s$ be a neighbour of $v$ in $G$.
Using Lemma~\ref{uncolouredvertexabsorption} applied to $G-v$, there is some $V_1\subseteq [n]\setminus \{v\}$ of size $|T_1'|-3\eps n$ with $v_s\in V_1$ such that the following holds. For any set $V\subseteq [n]\setminus\{v\}$ with $V_1\subset V$ and $|V|=|T_1'|$, the induced subgraph $G[V]$ contains a copy of $T_1'$ where $s$ is copied to $v_s$.

Now, for each $w\in [n]$, we have $d_G(w,[n]\setminus V_1)\geq (1/2+\alpha/2)n$. Furthermore, $|[n]\setminus (V_1\cup \{v\})|\geq n-|T'_1|+3\eps n\geq |T_2'|+|T_3'|+2\eps n$. Therefore, by Lemma~\ref{lem:newdivide1}, there is some partition $[n]\setminus (V_1\cup \{v\})=V_2\cup V_3$ such that $|V_2|\geq|T_2'|+\eps n$ and $|V_3|=|T'_3|+\eps n$, and $d(w,V_i)\geq (1/2+\alpha/4)|V_i|$ for each $w\in [n]$ and $i\in \{2,3\}$.

 Therefore, by Lemma~\ref{uncolouredkss}, there is a copy $S_2$ of $T'_2$ in $G[V_2\cup\{v_s\}]$ in which $s$ is copied to $v_s$. Similarly, there is  a copy $S_3$ of $T'_3$ in $G[V_3\cup \{v\}]$ in which $t$ is copied to $v$.
  Using the property of $V$, there is a copy, $S_1$ say, in $G-((V(S_2)\setminus \{v_s\})\cup V(S_3))$ of $T_1'$ in which $s$ is copied to $v_s$. Combining $S_1$, $S_2$ and $S_3$ and the edge $v_sv\in E(G)$ gives the desired copy of $T$.

\medskip

\noindent\textbf{Case III: $s\neq t$ and $st\notin E(T)$.}
Let $s_0=s$ and label vertices so that $s_0s_1s_2$ are the initial vertices of the path from $s$ to $t$ in $T_2$. Note that this is possible as such a path has length at least 2 as $s\neq t$ and $st\notin E(T)$. Let $T_1'=T_1$. Let $T'_2$, $T_3'$ and $T_4'$ be the components of $T_2-\{s_0s_1,s_1s_2\}$ containing $s_0,s_1$ and $s_2$ respectively. Note that $t\in V(T_4')$.
 Pick $w_0\in [n]\setminus\{v\}$ arbitrarily.
Applying Lemma~\ref{uncolouredvertexabsorption} to $G-v$, there is some $V_1\subseteq [n]\setminus \{v\}$ of size $|T_1'|-5\eps n$ with $w_0\in V_1$ such that the following holds. For any set $V\subseteq [n]\setminus \{v\}$ with $V_1\subset V$ and $|V|=|T_1'|$, the induced subgraph $G[V]$ contains a copy of $T_1'$ in which $s_0$ is copied to $w_0$.

Now, for each $w\in [n]$, we have $d_G(w,[n]\setminus (V_1\cup \{v\}))\geq (1/2+\alpha/2)n$. Furthermore, $|[n]\setminus (V_1\cup \{v\})|\geq |T_2'|+|T_3'|+|T_4'|+4\eps n$. Therefore, by Lemma~\ref{lem:newdivide1}, there is some partition $[n]\setminus (V_1\cup \{v\})=V_2\cup V_3\cup V_4\cup W$ such that $|V_i|\geq |T_i'|+\eps n$ for each $i\in \{2,3,4\}$ and $|W|=\eps n$, and, for each $w\in [n]$, we have $d(w,W)\geq (1/2+\alpha /4)|W|$ and $d(w,V_i)\geq (1/2+\alpha/4)|V_i|$ for each $i\in \{2,3,4\}$.

Therefore, as $|V_4\cup\{v\}|\geq |T'_4|+\eps n$,
 there is a copy $S_4$ of $T'_4$ in $G[V_4\cup\{v\}]$ in which $t$ is copied to $v$. Let $w_2$ be the copy of $s_2$ in $S_4$.
 Using the minimum degree condition into $W$, pick $w_1\in W$ so that it is a neighbour of $w_0$ and $w_2$.
Using that $|V_2\cup\{w_0\}|\geq |T'_2|+\eps n$, and Lemma~\ref{uncolouredkss}, there is a copy $S_2$ of $T'_2$ in $G[V_2\cup\{w_0\}]$ in which $s_0=s$ is copied to $w_0$.  Similarly, using Lemma~\ref{uncolouredkss}, there is a copy $S_3$ of $T'_3$ in $G[V_3\cup\{w_1\}]$ in which $s_1$ is copied to $w_1$.
  Using the property of $V$, there is a copy, $S_1$ say, in $G-((V(S_2)\cup V(S_3)\cup V(S_4))\setminus \{w_0\})$ of $T_1'$ in which $s_0$ is copied to $w_0$. Combining $S_1$, $S_2$, $S_3$ and $S_4$ along with the path $w_0w_1w_2$ gives the desired copy of $T$.
\end{proof}


\subsection{Embedding spanning trees with surplus colours}\label{surpluscolours}
For \ref{step-2} and \ref{step-4}, we now embed a spanning tree when we have surplus colours by dividing the tree into constantly many roughly-equal pieces and iteratively embedding each piece into randomly chosen subsets in a rainbow fashion. By dividing the tree into pieces we boost the proportion of surplus colours at each iteration, allowing us to embed using the following simple lemma.

\begin{lemma}\label{extracolours}
Let $1/n\ll 1/C, c \ll \alpha$ and let $\fG$ be a graph collection on $[n]$ with $|\fG|=Cn$ and $\delta(\fG)\geq (1/2+\alpha)n$. Let $T$ be an $n$-vertex tree with $\Delta(T)\leq cn/\log n$. Let $t\in V(T)$ and $v\in[n]$.

Then, there exists a rainbow copy of $T$ in $\fG$ in which $t$ is copied to $v$.
\end{lemma}
\begin{proof} Let $G$ be the graph on $[n]$ where $e\in \binom{[n]}{2}$ is an edge exactly if $|\{i\in[Cn]\colon e\in E(G_i)\}|\geq n-1$. By Proposition~\ref{prop:alsodirac}, we have that $\delta(G)\geq (1/2+\alpha/2)n$.
By Theorem~\ref{thm:treeuncoloured}, then, $G$ contains a copy of $T$ in which $t$ is copied to $v$. As the copy of $T$ has $n-1$ edges, each of which appears in at least $n-1$ graphs $G_i$, $i\in [Cn]$, we may greedily select a distinct colour of $\fG$ for each such edge, giving a rainbow copy of $T$ in $\fG$.
\end{proof}

Having proved Lemma~\ref{extracolours} for the iterative step, we can now prove our result for \ref{step-2} and \ref{step-4}.

\begin{lemma}\label{fewextracolours}
Let $1/n\ll c\ll \eps, \alpha$. Let $\fG$ be a graph collection with vertex set $[n]$, let $|\fG|\geq (1+\eps)n$ and $\delta(\fG)\geq (1/2+\alpha)n$. Let $T$ be an $n$-vertex tree with $\Delta(T)\leq cn/\log n$. Let $t\in V(T)$ and $v\in[n]$.

Then, there exists a rainbow copy of $T$ in $\fG$ in which $t$ is copied to $v$.
\end{lemma}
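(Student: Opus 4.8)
The plan is to chop $T$ into a bounded number of subtrees, each linear in size but small enough that the fixed surplus of $\eps n$ colours becomes a super-constant proportion of each subtree's edge set, and then embed the subtrees one at a time using Lemma~\ref{extracolours}. Fix a constant $C_0$ with $1/C_0\ll\alpha$ (this will be the multiplicative constant fed to Lemma~\ref{extracolours}), and set $k=\lceil 4C_0/\eps\rceil$, so $k$ depends only on $\eps$ and $\alpha$. We may assume $\eps\le 1$ and, by discarding graphs from $\fG$, that $|\fG|=m=\lceil(1+\eps)n\rceil\le 2n+1$.

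First, apply Corollary~\ref{treedecomp} with parameter $q=\lfloor n/k\rfloor$ to obtain edge-disjoint subtrees $R_1,\dots,R_\ell\subseteq T$ with $\bigcup_iE(R_i)=E(T)$ and $q\le|R_i|\le 4q$; since $\sum_i(|R_i|-1)=e(T)=n-1$ this forces $\ell\le(n-1)/(q-1)$, which is bounded in terms of $\eps$ and $\alpha$. Relabel these subtrees as $T_1,\dots,T_\ell$ so that $t\in V(T_1)$ and, for each $i\ge 2$, $T_i$ meets $\bigcup_{j<i}T_j$ in exactly one vertex, which we call $s_i$. Such an ordering exists: begin with any $R_i$ containing $t$ (one exists as $t$ is incident to an edge of $T$), and at each later step the subtrees chosen so far form a subtree $T'\subseteq T$, so by connectivity some remaining subtree $T_i$ meets $T'$, and $T_i\cap T'$ is then a subtree of $T$ with no edges (an edge of $T_i\cap T'$ would lie in two edge-disjoint pieces), hence a single vertex.

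Next, apply Lemma~\ref{lem:newdivide1} to $\fG$ with the vertex set $V=[n]\setminus\{v\}$ --- into which every $G_i$ has minimum degree at least $(1/2+\alpha/2)n$ --- to obtain a partition $[n]\setminus\{v\}=V_1\cup\dots\cup V_\ell$ with $|V_1|=|T_1|-1$ and $|V_i|=|T_i|-1$ for $i\ge2$ (all of which are $\Omega_{\eps,\alpha}(n)$) such that $d_{G_j}(w,V_i)\ge(1/2+\alpha/4)|V_i|$ for all $j\in[m]$, $w\in[n]$, $i\in[\ell]$. Now embed $T_1,\dots,T_\ell$ in order, maintaining a partial embedding $\phi$. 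When embedding $T_i$, set $W_i=\{v\}\cup V_1$ if $i=1$ and $W_i=\{\sigma_i\}\cup V_i$ otherwise, where $\sigma_i:=\phi(s_i)$ is already defined and lies outside $V_i$; note $|W_i|=|T_i|$ and $\delta(G_j[W_i])\ge(1/2+\alpha/8)|W_i|$ for every $j\in[m]$. The key point is that at this stage the number of unused colours is at least $m-(n-|T_i|)\ge\eps n+|T_i|\ge C_0|T_i|$, by the choice of $k$ and since $|T_i|\le 4q\le 4n/k$. Picking any $C_0|T_i|$ of the unused colours and applying Lemma~\ref{extracolours} --- with this sub-collection restricted to $W_i$, multiplicative constant $C_0$, the tree $T_i$ (for which $\Delta(T_i)\le\Delta(T)\le cn/\log n\le c'|T_i|/\log|T_i|$ for a suitable $c'\ll\alpha$, since $|T_i|=\Omega_{\eps,\alpha}(n)$), and fixed vertex $t\mapsto v$ if $i=1$ and $s_i\mapsto\sigma_i$ otherwise --- yields a rainbow copy of $T_i$ on the vertex set $W_i$ using only these newly-chosen colours, with the non-$s_i$ vertices of $T_i$ placed inside $V_i$ (resp.\ $V(T_1)$ inside $W_1$). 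Extend $\phi$ by this copy.

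Because the sets $W_1,V_2,\dots,V_\ell$ partition $[n]$ and the new vertices of each $T_i$ land in its own cell, while consecutive pieces are glued consistently at $s_i\mapsto\sigma_i$ (the fact that $T_i$ meets the earlier pieces only in $s_i$ guarantees no other identifications), the final $\phi$ is a bijection $V(T)\to[n]$, hence a spanning embedding of $T$ with $\phi(t)=v$; and since each $T_i$ is embedded rainbow using a block of colours disjoint from all others, $\phi(T)$ is a rainbow copy of $T$ in $\fG$. The one step I expect to need genuine care is the colour accounting --- verifying that dividing $T$ into $\Theta(C_0/\eps)$ roughly equal pieces turns the fixed surplus of $\eps n$ colours into a proportional surplus of at least $C_0$ within every piece, so that Lemma~\ref{extracolours} applies at each of the boundedly many steps --- together with checking that the subtrees can be ordered so each attaches to its predecessors along a single vertex; tracking the constant hierarchy and the disjointness of vertices and colours is then routine.
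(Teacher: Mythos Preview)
Your proof is correct and follows essentially the same strategy as the paper's: decompose $T$ into a bounded number of linear-sized subtrees via Corollary~\ref{treedecomp}, order them so each attaches to the union of its predecessors at a single vertex, partition $[n]\setminus\{v\}$ accordingly via Lemma~\ref{lem:newdivide1}, and then iteratively embed each piece with Lemma~\ref{extracolours}, the surplus of $\eps n$ colours becoming a $C_0$-fold surplus relative to each piece. Your write-up is in fact more careful than the paper's in two places --- you justify explicitly why the subtrees can be ordered so that each meets its predecessors in exactly one vertex, and you spell out the colour arithmetic $m-(n-|T_i|)\ge\eps n+|T_i|\ge C_0|T_i|$ --- but the argument is the same.
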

\begin{proof} Let $m=|\fG|\geq (1+\eps)n$.
Let $\mu$ satisfy $c\ll \mu \ll \eps,\alpha$ and let $C=10/\sqrt{\mu}$. By Corollary~\ref{treedecomp}, we can find for some $\ell\in \N$, edge-disjoint subtrees $T_1,\ldots, T_\ell$ such that $E(T)=\cup_{i\in [\ell]}E(T_i)$, and, for each $i\in [\ell]$, it holds that $\mu n\leq |T_i|\leq 4\mu n$.
Without loss of generality, we may assume $T_1\cup\ldots \cup T_i$ is a tree for each $i\in[\ell]$, and that $t\in V(T_1)$. For each $i\in[\ell]$, let $n_i=|T_i|-1$. Note that $\sum_{i\in [\ell]}n_i=n-1$. Using Lemma~\ref{lem:newdivide1}, let $[n]\setminus\{v\}=V_1\cup \ldots\cup V_\ell$ be a partition with $|V_i|=n_i$ for each $i\in [\ell]$ and $d_{G_i}(w,V_j)\geq (1/2+\alpha/2)n_j$ for each $w\in [n]$, $j\in [\ell]$ and $i\in [m]$.

Now, noting that $|\fG|\geq C\cdot e(T_1)$ and $c\ll \mu,1/C\ll \eps,\alpha$, and using Lemma~\ref{extracolours}, embed $T_1$ to $\fG[V_1]$ in a rainbow manner so that $t$ is embedded to $v$.
 Now, suppose for some $j\in[\ell-1]$ that $T_1\cup\ldots\cup T_{j}$ has been embedded in a rainbow manner into $\fG[V_1\cup\ldots\cup V_j]$. We describe how to embed $T_{j+1}$. Let $t'$ be the vertex common to $T_{j+1}$ and $T_1\cup\ldots T_j$, and let $v'\in [n]$ be the image of $t'$ under the embedding. Note that $\delta(G_i[\{v'\}\cup V_{j+1}])\geq (1/2+\alpha/4)(1+n_{j+1})$ and there are at least $\eps n - 1\geq C\cdot e(T_{j+1})$ graphs $G_i$ that have not been used in the rainbow embedding of $T_1\cup \ldots \cup T_j$. Then, by Lemma~\ref{extracolours} there is a rainbow embedding of $T_{j+1}$ into $\fG[V_{j+1}]$ in which $t'$ is embedded to $v'$ and which uses colours not used in the embedding of $T_1\cup \ldots\cup T_j$. Thus, the embeddings combine to give a rainbow embedding of  $T_1\cup\ldots\cup T_{j+1}$ into $\fG[V_1\cup\ldots\cup V_{j+1}]$. Therefore, we can reach a rainbow embedding of $T=T_1\cup \ldots\cup T_\ell$ into $\fG$, as desired, in which, furthermore, $t$ is copied to $v$.
\end{proof}


\subsection{Covering colours}\label{sec:treecolourcover}

In this section, we prove a result that allows us to perform \textbf{Step 3} for embedding spanning trees. It is straightforward to embed a tree in a rainbow fashion using exactly a fixed set of colours in a graph collection with minimum degree higher than the number of edges of a tree, as follows.

\begin{lemma}\label{lem:treecolourcover}
Let $m,n\in \mathbb{N}$ satisfy $m\leq n-1$. Let $\fG$ be a graph collection on $[n]$ with $|\fG|=m$ and $\delta(\fG)\geq m$. Let $T$ be an $m$-edge tree. Let $t\in V(T)$ and $v\in[n]$.

Then, $\fG$ contains a rainbow copy of $T$ in which $t$ is copied to $v$.
\end{lemma}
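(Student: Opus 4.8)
The plan is to embed $T$ greedily, edge by edge, in a way that simultaneously assigns each edge a distinct colour, using the fact that $\delta(\fG) \geq m = e(T)$ gives us enough room to extend at every step. First I would fix a root and a breadth-first (or any connected) ordering $t = u_1, u_2, \dots, u_{m+1}$ of $V(T)$, so that each $u_j$ with $j \geq 2$ has exactly one neighbour $u_{p(j)}$ preceding it, and set $t = u_1$ to be embedded to $v$. I process the vertices $u_2, \dots, u_{m+1}$ in this order; at step $j$ I have already embedded $u_1, \dots, u_{j-1}$ to distinct vertices of $[n]$ and used $j-2$ distinct colours on the $j-2$ edges embedded so far, and I must embed $u_j$ and colour the edge $u_{p(j)}u_j$ with a new colour.

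The key point is the extension step: let $w$ be the image of $u_{p(j)}$, let $W \subseteq [n]$ be the set of at most $j-1 \leq m$ already-used images, and let $C' \subseteq [m]$ be the set of at most $j-2$ already-used colours. I need a vertex $x \in [n] \setminus W$ and a colour $i \in [m] \setminus C'$ with $wx \in E(G_i)$. I count pairs: the number of edges at $w$ across the whole collection avoiding used colours is at least $\sum_{i \notin C'} d_{G_i}(w) \geq (m - |C'|)\cdot \delta(\fG) \geq (m-|C'|) m$; each already-used vertex in $W$ can "block" at most $m - |C'|$ such pairs (one per available colour), so the number of pairs $(x,i)$ with $x \notin W$, $i \notin C'$, $wx \in E(G_i)$ is at least $(m-|C'|)m - |W|(m-|C'|) = (m - |C'|)(m - |W|)$. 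Since $|W| \leq j-1 \leq m$ and in fact $j - 1 < m+1$ so $m - |W| \geq m - j + 1 \geq 0$; I should be slightly careful at the last step. Actually at step $j$ we have $|W| = j-1$ and $|C'| = j-2$, so the bound is $(m-j+2)(m-j+1) \geq 1\cdot 1 > 0$ as long as $j \leq m+1$, which holds for all steps. Hence a valid choice of $(x,i)$ exists; embed $u_j$ to $x$ and colour $u_{p(j)}u_j$ with colour $i$.

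The main (and only) obstacle is making sure the counting is tight enough at the final step, when almost all vertices and colours are used — this is exactly where the hypotheses $m \leq n-1$ (so that there is always a free vertex, even at the end, since we embed $m+1 \leq n$ vertices total) and $\delta(\fG) \geq m$ (so that each available colour still contributes $m$ edges at $w$, beating the $\leq m$ blocked vertices) are both needed and both suffice. Everything else is bookkeeping: the ordering ensures we only ever need to attach one new edge per new vertex, and distinctness of colours is maintained by always picking from $[m] \setminus C'$. Completing this for $j = 2, \dots, m+1$ yields a rainbow copy of $T$ in $\fG$ with $t$ copied to $v$.
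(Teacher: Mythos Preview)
Your approach is essentially the paper's --- a greedy embedding along a connected ordering rooted at $t$ --- but there is an arithmetic slip at the very last step. At $j=m+1$ you have $|W|=m$ and $|C'|=m-1$, so your bound $(m-j+2)(m-j+1)$ evaluates to $1\cdot 0=0$, not $\geq 1$ as you claim. The fix is that $w$ itself lies in $W$ but can never appear as $x$ (there are no loops), so the blocking count is at most $(|W|-1)(m-|C'|)$, giving
\[
(m-|C'|)\,m - (|W|-1)(m-|C'|) \;=\; (m-|C'|)(m-|W|+1) \;=\; (m-j+2)^2 \;\geq\; 1
\]
for all $j\leq m+1$. With this correction your argument is complete.

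The paper's version avoids the pair-counting entirely: it simply assigns colour $i$ to the $i$th edge in the ordering. Then at step $i$ one only needs a neighbour of the parent in $G_i$ outside the $i$ already-used vertices, and since $\delta(G_i)\geq m$ and the parent itself is among those $i$ vertices, at least $m-(i-1)\geq 1$ choices remain. Fixing the colour schedule in advance is what makes the last step go through without any extra care.
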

\begin{proof}[Proof of Lemma~\ref{lem:treecolourcover}] Let $t_0=t$. By iteratively removing leaves which are not $t$, label the vertices of $V(T)\setminus \{t\}$ as $t_1,\ldots,t_m$ so that $T[\{t_0,\ldots,t_i\}]$ is a tree for each $i\in [m]$. For each $i\in [m]$, let $s_i$ be the neighbour of $t_i$ in $T[\{t_0,\ldots,t_i\}]$.

 Let $\fG=(G_1,\ldots,G_m)$ be the graph collection, and set $\phi(t_0)=v$. Greedily, for each $i\in [m]$ in turn, select $\phi(t_i)\in [n]\setminus \{\phi(t_j):j<i\}$ such that $\phi(s_i)\phi(t_i)\in E(G_i)$.
Note that this is always possible as $\delta(\fG)\geq m$. Finally, note that $\phi(T)$ with each edge $\phi(s_i)\phi(t_i)$, $i\in [m]$, given colour $i$ is a rainbow copy of $T$ in $\fG$ in which $t_0=t$ is copied to $\phi(t_0)=v$.
\end{proof}

\subsection{Proof of Theorem~\ref{thm:rainbowkss}}\label{sec:treeproof}
We can now prove Theorem~\ref{thm:rainbowkss}, using the five steps outlined in Section~\ref{sketch}. Given any set of colours $C$, we say a graph is \emph{$C$-rainbow} if its edges have been given distinct colours in $C$.

\begin{proof}[Proof of Theorem~\ref{thm:rainbowkss}] 
Let $\gamma,\beta$ be such that $c\ll \gamma \ll \beta \ll \alpha$.
By Proposition~\ref{prop:threesmall},
we can find four edge-disjoint trees $T_1$, $T_2$, $T_3$ and $T_4$ such that $T_1\cup T_2$ and $T_1\cup T_2\cup T_3$ are connected, $E(T)=\cup_{i\in [4]}E(T_i)$, and the following holds. We have $\beta n\leq |T_1|,|T_4|\leq 3\beta n$ and $\gamma n\leq |T_3|\leq 3\gamma n$. Let $m=n-1$ and label the graphs in $\fG$ so that $\fG=(G_1,\ldots,G_m)$.

\medskip
\ref{step-1}. By Lemma~\ref{lem:colourabsorb:general}, there is an uncoloured copy $S_1$ of $T_1$ in $\cup_{i\in [m]}G_i$, and disjoint sets $A,C\subset [m]$ with $|A|=e(T_1)-\gamma n$ and $|C|\geq 4\beta m$ such that the following property holds.
\begin{enumerate}[label = \textbf{P}]
\item \label{keyprop} \;Given any subset $B\subset C$ with $|B|=\gamma n$, there is an $(A\cup B)$-rainbow colouring of $S_1$ in $\fG$.
\end{enumerate}
By removing elements of $C$, assume that $|C|=e(T_4)+\gamma n$.

\medskip

\ref{step-2}.
Let $V_1=V(S_1)$ and $n_1=|T_1|$. Let $n_2=|T_2|-1$ and $n_3=|T_3|+|T_4|-2$. Note that $n-n_1=n_2+n_3$ and, for each $v\in [n]$ and $i\in [m]$,
\[
d_{G_i}(v,[n]\setminus V_1)\geq (1/2+\alpha)n-|V_1|\geq (1/2+\alpha/2)n\geq (1/2+\alpha/2)(n_2+n_3).
\]
Therefore, by Lemma~\ref{lem:newdivide1}, there is a partition $[n]\setminus V_1=V_2\cup V_3$ such that $|V_2|=n_2$, $|V_3|=n_3$, and, for each $j\in \{2,3\}$, $v\in [n]$ and $i\in [m]$,  $d_{G_i}(v,V_j)\geq (1/2+\alpha /4)n_j$.

Let $t_1$ be the vertex common to both $T_1$ and $T_2$, and let $v_1$ be the copy of $t_1$ in $S_1$. Note that, for each $v\in [n]$ and $i\in [m]$,  $d_{G_i}(v,V_2\cup\{v_1\})\geq (1/2+\alpha /8)(n_2+1)$.
Furthermore, note that
\begin{equation}\label{eqn:usefull}
|[m]\setminus (A\cup C)|=e(T)-(e(T_1)-\gamma n)-(e(T_4)+\gamma n)= e(T_2)+e(T_3)\geq |T_2|+\gamma n/2.
\end{equation}
Therefore, by Lemma~\ref{fewextracolours}, there is an $([m]\setminus (A\cup C))$-rainbow copy $S_2$ of $T_2$ in $\fG$ with vertex set $V_2\cup \{v_1\}$ in which $t_1$ is copied to $v_1$.

\medskip

\ref{step-3}. Let $B$ be the set of colours in $[m]\setminus (A\cup C)$ not used for $S_2$, and note that
\[
|B|=m-|A\cup C|-e(T_2)\overset{\eqref{eqn:usefull}}{=} (e(T_2)+e(T_3))-e(T_2)=e(T_3).
\]
Let $t_2$ be the vertex common to both $T_1\cup T_2$ and $T_3$, and let $v_2$ be the copy of $t_2$ in $S_1\cup S_2$. Note that, for each $v\in [n]$ and $i\in [m]$, we have $d_{G_i}(v,V_3\cup\{v_2\})\geq (1/2+\alpha /8)(n_3+1)$. Furthermore, $n_3\geq |T_4|\geq \beta n$, $|T_3|\leq 3\gamma n$ and $\gamma\ll \beta$. Therefore, by Lemma~\ref{lem:treecolourcover}, there is a $B$-rainbow copy $S_3$ of $T_3$ in $\fG[V_3\cup\{v_2\}]$ in which $t_2$ is copied to $v_2$.

\medskip

\ref{step-4}. Let $V_4=V_3\setminus V(S_3)$, so that $|V_4|=|T_4|-1$. Let $t_3$ be the vertex common to both $T_1\cup T_2\cup T_3$ and $T_4$, and let $v_3$ be the copy of $t_3$ in $S_1\cup S_2\cup S_3$. Note that, for each $v\in [n]$ and $i\in [m]$,  as $\gamma\ll \beta$,
\[
d_{G_i}(v,V_4\cup\{v_3\})\geq (1/2+\alpha /4)n_3-|S_3|\geq (1/2+\alpha /8)n_3\geq (1/2+\alpha /8)|T_4|.
\]
Recall that $|C|=e(T_4)+\gamma n$. Therefore, by Lemma~\ref{fewextracolours}, there is a $C$-rainbow copy $S_4$ of $T_4$ in $\fG[V_4\cup \{v_3\}]$ in which $t_3$ is copied to $v_3$.

\medskip

\ref{step-5}. Let $B'$ be the set of colours in $[m]$ not used on $S_2\cup S_3\cup S_4$, so that $A\subset B'$ and $|B'|=e(T_1)$. Using \ref{keyprop}, colour $S_1$ so that is it $B'$-rainbow. Then, $S_1\cup S_2\cup S_3\cup S_4$ is a rainbow copy of $T$ in $\fG$, as required.
\end{proof}


\section{Transversal factors}\label{sec:factors}
In this section, we prove Theorem~\ref{thm:fullyrainbowfactors} and Theorem~\ref{thm:monocopies}. We do this under one statement (Theorem~\ref{thm:unitedrainbowfactors}), using the following definition. A \emph{$t$-copy} of a graph $F$ is a copy of $F$ with an edge-colouring using exactly $t$ colours.

\begin{definition}
Let $F$ be an $r$-vertex graph with at least $t$ edges, and let $\fG$ be a graph collection on vertex set $[rn]$. An \emph{$(F,t)$-factor} in $\fG$ is a vertex-disjoint union of $t$-copies of $F$ in $\fG$, such that these $t$-copies share no colours, and every vertex appears in some $t$-copy.
\end{definition}

As an $(F,e(F))$-factor in $\fG$ is a simply a rainbow $F$-factor, and an $(F,1)$-factor is a collection of monochromatic copies of $F$ using different colours, Theorems~\ref{thm:fullyrainbowfactors} and \ref{thm:monocopies} follow immediately from the following result.

\begin{theorem}\label{thm:unitedrainbowfactors}
Let $1/n\ll \alpha,1/r$, let $F$ be a graph on $r$ vertices with at least $1$ edge, and let $t\in \{1,e(F)\}$. If $\delta_F\geq 1/2$ or $F$ has a bridge, then let $\delta^T_F=\delta_F$, and otherwise let $\delta^T_F=1/2$.

Suppose $\fG$ is a graph collection on $[rn]$ with $|\fG|=tn$. If $t=1$, then suppose $\delta(\fG)\geq (\delta_F+\alpha)rn$, while if $t=e(F)$, then suppose $\delta(\fG)\geq (\delta^T_F+\alpha)rn$.
Then, $\fG$ contains an $(F,t)$-factor.
\end{theorem}
We prove Theorem~\ref{thm:unitedrainbowfactors} using the outline in Section~\ref{sketch}. In Section~\ref{sec:facspare} we embed factors when we have many spare colours, for \ref{step-2} and \ref{step-4}. In Section~\ref{specific}, we find $(F,t)$-copies while making sure we use a set of specified colours, for \ref{step-3}. In Section~\ref{tilingproofs}, we then give the proof of Theorem~\ref{thm:unitedrainbowfactors}.



\subsection{Factors with spare colours}\label{sec:facspare}
We show here that Theorem~\ref{thm:unitedrainbowfactors} holds if the graph collection $\fG$ has linearly more elements than we need, as follows.
\begin{lemma}\label{lem:to-iterate-fully-rainbow}
Let $1/n\ll \eps,\eta,1/r$. Let $F$ be an $r$-vertex graph with at least $t$ edges, and let $m=(1+\eta) nt$. Suppose $\fG$ is a graph collection with vertex set $[rn]$, with $|\fG|=m$ and $\delta(\fG)\geq (\delta_F+\eps)rn$.
Then, $\fG$ contains an $(F,t)$-factor.
\end{lemma}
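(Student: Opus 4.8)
\textbf{Proof proposal for Lemma~\ref{lem:to-iterate-fully-rainbow}.}

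The plan is to partition the vertex set into many parts of constant size, and then find an $(F,t)$-factor inside each part using a fresh block of colours, so that iterating over the parts covers the whole vertex set with no colour repeated. Concretely, fix an integer $k$ with $1/n\ll 1/k\ll \eps,1/r,1/t,\eta$, and apply Lemma~\ref{lem:partition} (with $\delta=\delta_F$ and the $n_i$ all roughly equal to $rk$, chosen divisible by $r$) to get a partition $[rn]=V_1\cup\ldots\cup V_\ell$ with $\ell\approx n/k$, each $|V_i|\approx rk$, and such that for each $i\in[\ell]$ all but at most $m/k^2$ colours $j\in[m]$ satisfy $\delta(G_j[V_i])\geq (\delta_F+\eps/2)|V_i|$. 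The key point is that $m/k^2$ is small compared to the number of colours we will allocate to each part.

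Next I would set up a colour budget. Each part $V_i$ needs $t\cdot (|V_i|/r)\approx tk$ colours to host an $(F,t)$-factor; the total need over all parts is $\sum_i t|V_i|/r = tn$, and we have $m=(1+\eta)tn$ colours, so there is a genuine surplus of $\eta tn$ colours. Process the parts $V_1,\ldots,V_\ell$ one at a time. When processing $V_i$, let $m_i$ be the number of colours not yet used; since at most $tn$ colours have been consumed so far (in fact at most $tn$ in total), we have $m_i\geq \eta tn \geq (\eta/2)\cdot|\{\text{colours good for }V_i\}|$ is large — more precisely, among the $m_i$ unused colours, all but at most $m/k^2$ are good for $V_i$, and $m_i - m/k^2$ is still much larger than the $\approx tk$ colours we need, because $m/k^2$ is tiny compared to $\eta tn$. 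So we have a sub-collection $\fG'$ of unused good colours on vertex set $V_i$, with $|\fG'|$ far exceeding $t\cdot(|V_i|/r)$ and $\delta(\fG')\geq (\delta_F+\eps/2)|V_i|$.

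The core combinatorial step is then: a graph collection on a constant-sized vertex set $V_i$ (size $rk'$ with $k'=|V_i|/r$) with minimum degree $\geq(\delta_F+\eps/2)|V_i|$ and with a huge number $N$ of available colours contains an $(F,t)$-factor. For this I would use a counting/pigeonhole argument exactly as sketched in Section~\ref{sketch}: by definition of $\delta_F$, each good colour class $G_j[V_i]$ contains an $F$-factor, hence (taking the first edge-colour choices appropriately when $t=e(F)$, or reading the $F$-factor monochromatically when $t=1$) a $t$-coloured-ready $F$-factor structure; but the number of distinct (uncoloured) $F$-factors on the fixed vertex set $V_i$ is at most $(rk')!$, a constant. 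So some single uncoloured $F$-factor $H$ of $V_i$ is contained in $\Omega_{k,r}(N)$ of the good colours. Since we need only $t\cdot k'$ colours and $t\cdot k'$ is bounded while $\Omega_{k,r}(N)$ is enormous, when $t=1$ we greedily assign a distinct colour $j$ with $H\subseteq G_j[V_i]$ to each of the $k'$ copies of $F$; when $t=e(F)$ we instead note each such $G_j[V_i]\supseteq H$ lets us recolour edges, and a short greedy/Hall-type argument over the $t\cdot k'$ edges of $H$ using the $\Omega_{k,r}(N)$ available colours (each edge lying in $\Omega(N)$ of them) yields a rainbow colouring of $H$ — this is the same mechanism as in Lemma~\ref{lem:treecolourcover} but on a bounded structure. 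Mark these $\leq tk'$ colours as used and move to $V_{i+1}$.

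The main obstacle I anticipate is bookkeeping the colour accounting cleanly: one must verify at every stage that the number of unused colours that are simultaneously good for the current part still comfortably exceeds the $\approx tk$ needed, which requires $m/k^2 \ll \eta tn$ (true since $k\gg 1/\eta$ and $\ell\approx n/k$ so total waste is controlled) and that the pigeonhole constant $\Omega_{k,r}(\cdot)$ beats the bounded number $(rk')!$ of $F$-factors — this is where the hierarchy $1/k\ll \eps,1/r,1/t,\eta$ and $1/n\ll 1/k$ gets used. A secondary subtlety is handling the two cases $t=1$ and $t=e(F)$ uniformly; for $t=e(F)$ one should be slightly careful that "contains an $F$-factor" gives enough edge-colour freedom, but since every good colour provides the \emph{same} fixed $H$ and there are linearly many such colours, any injective assignment of colours to the $tk'$ edges respecting containment works, and such an injection exists by a trivial degree count (each edge is available in $\gg tk'$ colours). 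Once each part is handled, the union of the per-part $(F,t)$-factors is an $(F,t)$-factor of $\fG$: it is spanning since the $V_i$ partition $[rn]$, vertex-disjoint by construction, and colour-disjoint since each part drew from the previously-unused colours.
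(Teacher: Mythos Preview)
Your proposal is correct and matches the paper's approach: partition via Lemma~\ref{lem:partition}, iterate over the parts, and on each part pigeonhole the at least $\eta tn/2$ good unused colours over the $\leq (2rk)!$ possible $F$-factors on that part to find one common uncoloured $F$-factor, then colour it greedily. Your Hall-type worry for the $t=e(F)$ case is unnecessary: since the common $F$-factor $H$ satisfies $H\subset G_j$ for \emph{every} pigeonholed colour $j$, each edge of $H$ is available in every such colour, so one simply assigns any $t$ distinct such colours to each copy of $F$ --- the paper handles both cases with that single observation.
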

\begin{proof}
Let $k\in \N$ satisfy $1/n\ll 1/k\ll \eps,\eta,1/r$. Choose $\ell\in \N$, and $k\leq n_1,\ldots,n_{\ell}\leq 2k$ such that $n=\sum_{i\in [\ell]}n_i$. By Lemma~\ref{lem:partition}, there is a partition $[rn]=V_1\cup \ldots \cup V_\ell$ with $|V_i|=rn_i$ for each $i\in [\ell]$, and for all but at most $m/k^2\leq  \eta nt/2$ values of $j\in [m]$, we have $\delta(G_j[V_i])\geq (\delta_F+\eps/2)|V_i|$.

Let $j\leq \ell$ be the largest integer for which $\fG[V_1\cup\ldots\cup V_j]$ contains an $(F,t)$-factor, $\bar{F}$ say. Assume that $j<\ell$, for otherwise we are done. Let $I_{j+1}\subset [m]$ be the set of colours $i$ not used on $\bar{F}$ for which $\delta({G_i}[V_{j+1}])\geq (\delta_F+\eps/2)|V_{j+1}|$, noting that
\[
|I_{j+1}|\geq m-t\cdot \sum_{i\in [j]}n_i-\eta nt/2\geq m-nt-\eta nt/2 \geq \eta nt/2.
\]

Now, for each $i\in I_{j+1}$, the induced subgraph $G_{i}[V_{j+1}]$ contains an $F$-factor as $1/k\ll \eps,1/r$. There are at most $(rn_{j+1})!\leq (2rk)!$ distinct $F$-factors on $rn_{j+1}$ vertices, so as $1/n\ll \eta,1/k,1/r,1/t$, there is some vertex-disjoint union of $n_{j+1}$ copies of $F$,  say $F'$,  such that $F'\subset G_i[V_{j+1}]$ for at least $tn_{j+1}$ values of $i\in I_{j+1}$. Greedily colour the edges of $F'$ using $t$ colours from $I_{j+1}$ for each copy of $F$, and using different colours for different copies of $F$. Then, $\bar{F}\cup F'$ is an  $(F,t)$-factor of $\fG[V_1\cup\ldots\cup V_{j+1}]$, which contradicts the maximality of $j$.
\end{proof}


\subsection{Using specific colours in factors}\label{specific}
\par In this section, we find $(F,t)$-factors which exhaust a given large set of colours (for \ref{step-3} in Section~\ref{sketch}). We begin by collecting some useful auxiliary results.
The following lemma is a consequence of the standard (regularity-free) proof of the Erd\H{o}s-Stone theorem (see, for example, \cite{moderngraphtheory}). Recall that $K_r(t)$ is a complete $r$-partite graph with $t$ vertices in each vertex class.
\begin{lemma}\label{es}
Let $1/n\ll 1/T\ll 1/t\ll \eps, 1/r$. Suppose $G$ is an $n$-vertex graph with $\delta(G)\geq (1-1/(r-1)+\eps)n$, and $K$ is a copy of $K_{r-1}(T)$ contained in $G$. Then, $G$ contains a copy of $K_r(t)$ whose intersection with $K$ is a copy of $K_{r-1}(t)$.
\end{lemma}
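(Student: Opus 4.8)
The plan is to extend the given copy $K$ of $K_{r-1}(T)$ inside $G$. Write $W_1,\dots,W_{r-1}$ for its vertex classes, each of size $T$. I will show there are subsets $A_i\subseteq W_i$ with $|A_i|=t$ for each $i\in[r-1]$ whose common neighbourhood $N:=\bigcap_{i\in[r-1]}\bigcap_{a\in A_i}N_G(a)$ satisfies $|N|\ge t+(r-1)T$. Granting this, $N\setminus\bigcup_{i}W_i$ has size at least $t$ and is disjoint from $V(K)$; let $B$ be any $t$ of its vertices. Since $K\subseteq G$ is complete $(r-1)$-partite, $A_1,\dots,A_{r-1}$ are pairwise completely joined in $G$, and by definition of $N$ every vertex of $B$ is joined to every vertex of $A_1\cup\dots\cup A_{r-1}$; hence $A_1,\dots,A_{r-1},B$ span a copy of $K_r(t)$ in $G$. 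Its intersection with $K$ has vertex set exactly $A_1\cup\dots\cup A_{r-1}$ and is a copy of $K_{r-1}(t)$, as required.

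To produce the $A_i$, I would count the tuples $(A_1,\dots,A_{r-1},v)$ with $A_i\in\binom{W_i}{t}$ for each $i$ and $v\in V(G)$ adjacent to every vertex of $A_1\cup\dots\cup A_{r-1}$; writing $x_i(v):=|N_G(v)\cap W_i|$, the number of such tuples is $\sum_{v\in V(G)}\prod_{i=1}^{r-1}\binom{x_i(v)}{t}$. Averaging the minimum degree condition over $W_i$ gives $\frac{1}{n}\sum_{v\in V(G)}x_i(v)=\frac{1}{n}\sum_{w\in W_i}d_G(w)\ge\bigl(1-\tfrac{1}{r-1}+\eps\bigr)T$, so, as $x_i(v)\le T$ always, at most a $\bigl(\tfrac{1}{r-1}-\eps+\tfrac{t}{T}\bigr)$-proportion of vertices $v$ can have $x_i(v)<t$. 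Taking a union bound over $i\in[r-1]$, at least a $\bigl((r-1)\eps-(r-1)\tfrac{t}{T}\bigr)$-proportion of vertices $v$ — hence, as $t/T\ll\eps$, at least $(r-1)\eps n/2$ vertices $v$ — have $x_i(v)\ge t$ for every $i$, and each such $v$ contributes $\prod_i\binom{x_i(v)}{t}\ge1$ to the sum. Thus $\sum_v\prod_i\binom{x_i(v)}{t}\ge (r-1)\eps n/2$, which, since $n\gg T,t,r,1/\eps$, exceeds $\bigl(t+(r-1)T\bigr)\binom{T}{t}^{r-1}$. As there are only $\binom{T}{t}^{r-1}$ choices of $(A_1,\dots,A_{r-1})$, some choice appears in more than $t+(r-1)T$ of the counted tuples, i.e.\ has $|N|>t+(r-1)T$.

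I do not expect a genuine obstacle: this is essentially the vertex-extension step in the standard proof of the Erd\H{o}s--Stone theorem, and the one point where the hypothesis is used sharply is the union bound, which needs exactly $(r-1)\bigl(\tfrac{1}{r-1}-\eps\bigr)<1$, i.e.\ the slack $\eps$ in the degree condition. The only care required is bookkeeping in the hierarchy $1/n\ll1/T\ll1/t\ll\eps,1/r$: $T$ must be large enough that $t/T$ is negligible next to $\eps$, and $n$ large enough that the constant $\bigl(t+(r-1)T\bigr)\binom{T}{t}^{r-1}$ is dwarfed by $(r-1)\eps n/2$. Equivalently, the selection of the $A_i$ can be phrased probabilistically: choosing each $A_i$ uniformly and independently from $\binom{W_i}{t}$, the expected size of the common neighbourhood is $\binom{T}{t}^{-(r-1)}\sum_v\prod_i\binom{x_i(v)}{t}=\Omega_{\eps,T,t,r}(n)$, which exceeds $t+(r-1)T$ for some outcome.
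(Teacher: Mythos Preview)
Your proof is correct and is precisely the standard extension step from the regularity-free proof of the Erd\H{o}s--Stone theorem that the paper cites (without spelling out). The paper gives no proof of its own for this lemma, so there is nothing further to compare; your counting/pigeonhole argument to locate $A_1,\dots,A_{r-1}$ with large common neighbourhood is exactly the intended content of that citation.
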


The following lemma is a standard consequence of the K\H{o}v\'ari-S\'os-Tur\'an theorem.
\begin{lemma}\label{weakkst}
Let $1/n\ll\gamma, 1/t$. Then, any $n$-vertex graph with at least $\gamma n^2$ edges contains a copy of $K_{t,t}$.
\end{lemma}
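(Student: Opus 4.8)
The plan is to prove this by the standard double-counting argument underlying the Kővári–Sós–Turán theorem. Suppose, for contradiction, that $G$ is an $n$-vertex graph with $e(G)\geq\gamma n^2$ which contains no copy of $K_{t,t}$, and count the pairs $(v,S)$ with $v\in V(G)$ and $S$ a $t$-element subset of $N(v)$.

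First I would lower-bound this count. It equals $\sum_{v\in V(G)}\binom{d(v)}{t}$, and since $x\mapsto\binom{x}{t}$ is convex on the relevant range (equivalently, the sum is minimised, for a fixed degree sum, when the degrees are as equal as possible — a one-line smoothing argument using $\binom{d+1}{t}-\binom{d}{t}=\binom{d}{t-1}$), Jensen's inequality gives
\[
\sum_{v\in V(G)}\binom{d(v)}{t}\;\geq\;n\binom{2e(G)/n}{t}\;\geq\;n\binom{2\gamma n}{t}\;\geq\;\frac{n(\gamma n)^t}{t!},
\]
where the last step uses that each of the $t$ factors of $\binom{2\gamma n}{t}$ is at least $2\gamma n-t\geq\gamma n$ once $n\geq t/\gamma$. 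On the other hand, since $G$ has no $K_{t,t}$, no $t$-subset of $V(G)$ lies in the neighbourhood of $t$ distinct vertices, so each such subset is counted at most $t-1$ times; hence the count is at most $(t-1)\binom{n}{t}\leq (t-1)n^t/t!$. Combining the two bounds yields $n\gamma^t\leq t-1$, i.e.\ $n\leq (t-1)\gamma^{-t}$, which contradicts $1/n\ll\gamma,1/t$.

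There is no genuine obstacle here: the only point requiring a word of care is justifying the convexity/Jensen step (and noting that vertices of degree less than $t$ contribute $0$ under the usual convention for $\binom{x}{t}$, so they cause no harm). Alternatively, one could simply quote the Kővári–Sós–Turán theorem, which asserts that a $K_{t,t}$-free $n$-vertex graph has at most $\tfrac12\bigl((t-1)^{1/t}(n-t+1)n^{1-1/t}+(t-1)n\bigr)=o(n^2)$ edges, and observe that this is smaller than $\gamma n^2$ for all sufficiently large $n$, which is immediate from $1/n\ll\gamma,1/t$.
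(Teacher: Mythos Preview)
Your proof is correct: this is exactly the standard double-counting that underlies the K\H{o}v\'ari--S\'os--Tur\'an theorem, and your alternative of simply quoting that theorem is precisely what the paper does (it states the lemma as ``a standard consequence of the K\H{o}v\'ari--S\'os--Tur\'an theorem'' and gives no further proof). So your approach matches the paper's, only with more detail supplied.
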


Finally, we recall the following well known fact (see \cite{kuhnosthus}).
\begin{fact}\label{fact}
For any graph $F$ with at least 1 edge, $\delta_F> 1-1/(\chi(F)-1)$.
\end{fact}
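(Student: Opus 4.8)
The plan is to prove the bound by exhibiting an extremal construction. For each $\eps>0$ I would produce, for infinitely many $n$ divisible by $|F|$, an $n$-vertex graph $G$ with $\delta(G)\geq(1-1/(\chi(F)-1)-\eps)n$ that contains no copy of $F$ at all, and hence no $F$-factor. Since $\delta_F$ is by definition the smallest degree threshold (measured as a proportion of $n$) that forces an $F$-factor in all sufficiently large graphs of order divisible by $|F|$, such a family of examples immediately gives $\delta_F\geq 1-1/(\chi(F)-1)$.

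Write $\chi=\chi(F)$, and note $\chi\geq 2$ since $F$ has an edge, so the inequality is non-trivial only when $\chi\geq 3$. Take $G$ to be the complete $(\chi-1)$-partite graph on $[n]$ with all parts of size $\lfloor n/(\chi-1)\rfloor$ or $\lceil n/(\chi-1)\rceil$. Every vertex is joined to all vertices outside its own part, so $\delta(G)=n-\lceil n/(\chi-1)\rceil\geq(1-1/(\chi-1))n-1$, which exceeds $(1-1/(\chi-1)-\eps)n$ once $n>1/\eps$. On the other hand, colouring each part with its own colour shows $\chi(G)\leq\chi-1<\chi(F)$, and no graph of chromatic number less than $\chi(F)$ can contain a subgraph isomorphic to $F$; hence $G$ is $F$-free, and a fortiori has no $F$-factor.

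It then remains to unravel the definition of $\delta_F$. Suppose for contradiction that $\delta_F<1-1/(\chi-1)$, and fix $\eps>0$ with $\delta_F+\eps<1-1/(\chi-1)$. By the defining property of $\delta_F$ there is some $n_0$ such that every graph on $n\geq n_0$ vertices with $|F|\mid n$ and minimum degree at least $(\delta_F+\eps)n$ contains an $F$-factor. Pick $n\geq n_0$ with $|F|\mid n$ and $n>1/\eps$; the graph $G$ constructed above then has $\delta(G)\geq(1-1/(\chi-1))n-1\geq(\delta_F+\eps)n$, so it must contain an $F$-factor, contradicting the previous paragraph. Therefore $\delta_F\geq 1-1/(\chi(F)-1)$, as claimed.

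I do not expect any real obstacle here: the argument is a one-line extremal construction plus routine quantifier bookkeeping. The only two points worth stating with care are that a complete $(\chi-1)$-partite graph has chromatic number exactly $\chi-1$ (so it cannot contain $F$), and the choice of $\eps$ strictly inside the gap $1-1/(\chi-1)-\delta_F$, so that the additive loss of $1$ in the minimum degree is absorbed for all large $n$.
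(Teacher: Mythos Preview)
The paper does not actually prove this fact; it is stated as a well-known result with a reference to K\"uhn and Osthus~\cite{kuhnosthus}. Your argument via the balanced complete $(\chi(F)-1)$-partite Tur\'an graph is precisely the standard extremal construction behind this bound, and it is correct as written.
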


We first prove a lemma that allows us to ensure we use one given colour while finding an $(F,t)$-copy, as follows.

\begin{lemma}\label{lem:obstinate} Let $1/n\ll 1/m\ll \eps,1/r$. Let $F$ be an $r$-vertex graph with at least 1 edge and let $t\in \{1,e(F)\}$.
Let $\fG=(G_1,\ldots,G_m)$ be a graph collection on vertex set $[n]$ with $\delta(\fG)\geq (\delta_F+\eps)n$. Suppose that $F$ has a bridge, or $\delta(\fG)\geq (1/2+\eps)n$, or $t=1$. Then $\fG$ contains a $t$-copy of $F$ which uses at least one edge from $G_1$.
\end{lemma}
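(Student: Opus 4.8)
The plan is to reduce the problem to the uncoloured factor theorem plus a small amount of work to force a particular colour to appear. The easy case is $t=1$: then any monochromatic copy of $F$ in $G_1$ works, and $G_1$ contains an $F$-copy (indeed an $F$-factor) since $\delta(G_1)\geq (\delta_F+\eps)n$ and $1/n\ll \eps,1/r$. So we may assume $t=e(F)$, and we must produce a rainbow $F$-copy using an edge of $G_1$. Form the auxiliary graph $G$ on $[n]$ consisting of edges lying in at least $\eps m/2$ of the $G_i$; by Proposition~\ref{prop:alsodirac}, $\delta(G)\geq (\delta_F+\eps/2)n$, so $G$ contains an $F$-factor, and any single copy $F_0$ of $F$ in $G$ can be greedily rainbow-coloured since each of its $\leq \binom r2$ edges has $\geq \eps m/2 \gg r^2$ available colours. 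This handles everything except the requirement that colour $1$ be used.

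\textbf{Forcing colour $1$.} Pick an edge $uv\in E(G_1)$. We want an $F$-copy through $uv$ where $uv$ gets colour $1$ and the remaining $e(F)-1$ edges are rainbow-coloured from $\{2,\dots,m\}$. Since $F$ has at least one edge, fix an edge $xy\in E(F)$; it suffices to embed $F$ with $x\mapsto u$, $y\mapsto v$. If $F$ has a bridge, we may take $xy$ to be a bridge; write $F-xy = F_x\cup F_y$ with $x\in F_x$, $y\in F_y$. Since $\delta_F\geq 1-1/(\chi(F)-1)$ by Fact~\ref{fact}, and $\chi(F_x),\chi(F_y)\leq\chi(F)$, the graph $G$ (with $uv$ deleted, or just working in $G-\{u,v\}$ with min-degree still $(\delta_F+\eps/4)n$) contains disjoint copies of $F_x$ and $F_y$ rooted at $u$ and $v$ respectively: here one uses that a graph with min degree $(1-1/(\chi-1)+\eps)n$ contains $K_{\chi-1}(T)$ for large $T$ by Erd\H os--Stone, hence by Lemma~\ref{es} contains $K_\chi(t)$, and one can root a copy of any $\chi$-chromatic graph on $\leq t$ vertices at a prescribed vertex of appropriate degree — more simply, since $\Delta(F)\leq r$ and $\delta(G-\{u,v\})$ is much larger than $(1-1/(\chi-1))n$, a standard greedy/absorption-free embedding places $F_x$ with $x\mapsto u$ avoiding $v$, then $F_y$ with $y\mapsto v$ avoiding the used vertices. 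Rainbow-colour the $e(F)-1$ edges of $F_x\cup F_y$ greedily from $\{2,\dots,m\}$ (possible as each has $\geq \eps m/2$ colours and $e(F)-1< \eps m/2$), and colour $uv$ with $1$; this is the desired $t$-copy.

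\textbf{The non-bridge case.} If $F$ has no bridge, we instead use the hypothesis $\delta(\fG)\geq (1/2+\eps)n$. The obstacle is that we cannot split $F$ across the edge $uv$ and embed the two halves independently while keeping everything inside a single $F$-copy. Instead, fix $xy\in E(F)$ arbitrarily and note that $F-\{x,y\}$ has a proper colouring; more to the point, we want to embed $F$ into $G$ with $x\mapsto u$, $y\mapsto v$. Since $uv\in E(G_1)\subseteq$ possibly not in $G$, we keep $uv$ as a ``free'' edge to be coloured $1$ and must embed the rest of $F$ (all vertices other than $x,y$, and all edges other than $xy$) into $G-\{u,v\}$, adjacent appropriately to $u,v$. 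As $\delta_F\leq 1/2$ in this regime would contradict... actually the point is simply that $\delta(G-\{u,v\})\geq (1/2+\eps/2)n$, and $F$ minus an edge still has chromatic number $\leq \chi(F)$, while $(1/2+\eps/2)n\geq (1-1/(\chi(F-xy)-1)+\eps')n$ only when $\chi(F)\leq 3$; for larger $\chi(F)$ we are in the regime $\delta_F\geq 1-1/(\chi(F)-1)\geq 2/3 > 1/2$, so $\delta(\fG)\geq(\delta_F+\eps)n$ already gives enough room to embed $F$ with two prescribed adjacent vertices $x\mapsto u$, $y\mapsto v$ by the rooted Erd\H os--Stone argument (Lemma~\ref{es}) applied to find $K_{\chi(F)}(t)$ containing $u,v$ in distinct parts. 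In all cases we then rainbow-colour the remaining $e(F)-1$ edges from $\{2,\dots,m\}$ greedily and give $uv$ colour $1$.

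\textbf{Main obstacle.} The delicate point is the non-bridge case: rooting an $F$-copy at a prescribed \emph{edge} $uv$ (with $uv$ itself not guaranteed to lie in the host graph $G$) rather than merely finding an unrooted copy. The clean way around this is to observe that whenever $uv\in E(G_1)$ we only need to embed $F$ so that the images of the two endpoints of some fixed edge $xy\in E(F)$ are $u$ and $v$, which by Lemma~\ref{es} follows from finding a copy of $K_{\chi(F)}(t)$ in $G$ (or in $\fG$-majority graph $G$) that contains $u$ and $v$ in two different colour classes — and such a copy exists because $u,v$ have huge common neighbourhood (using $\delta(\fG)\geq(1/2+\eps)n$ in the no-bridge case, or $\delta_F$-bound otherwise), into which one iteratively builds the complete multipartite structure via Erd\H os--Stone. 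Careful bookkeeping of which min-degree bound is available in which case ($t=1$; $F$ has a bridge; $\delta(\fG)\geq(1/2+\eps)n$) is the only remaining chore, and in each case the bound exceeds $\delta_F+\eps/2$ comfortably enough for the rooted embedding and the greedy recolouring to go through.
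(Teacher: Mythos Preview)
Your overall strategy — form the auxiliary majority graph $G$, find an $F$-copy containing a $G_1$-edge, then greedily rainbow-colour the remaining edges — is exactly the paper's. The gap lies in how you force the $G_1$-edge: you commit to a specific $uv\in E(G_1)$ first and then try to root an $F$-copy at it, and this is where both of your cases break.

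In your bridge case with $F$ bipartite, $\delta_F$ can be arbitrarily small, so $\delta(G)$ may only be $(\eps/2)n$. Your claim that a ``standard greedy/absorption-free embedding'' places $F_x$ with $x\mapsto u$ is not justified here: if $F_x$ contains, say, a $4$-cycle, a greedy embedding will need a common neighbour of two already-placed vertices, and minimum degree $(\eps/2)n$ does not provide one. (A rooted K\H{o}v\'ari--S\'os--Tur\'an or dependent-random-choice argument could rescue this, but that is not what you invoke.) In your non-bridge case there is a parallel problem: you look for $K_{\chi(F)}(t)$ in $G$ with $u,v$ in distinct parts, but nothing guarantees $uv\in E(G)$, and if $uv\notin E(G)$ then no such copy exists in $G$; Lemma~\ref{es} does not help you build one in $G\cup\{uv\}$.

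The paper sidesteps both issues by \emph{not} prescribing the $G_1$-edge in advance. When $\delta(\fG)\geq(1/2+\eps)n$, it first notes that every vertex has $|N_G(v)\cap N_{G_1}(v)|\geq 3\eps n/2$, so $e(G\cap G_1)=\Omega(\eps n^2)$; K\H{o}v\'ari--S\'os--Tur\'an then yields a $K_{2}(r_1)$ inside $G\cap G_1$, which is iteratively extended via Lemma~\ref{es} to a $K_{\chi(F)}(r)$ in $G$ whose intersection with the original $K_2(r_1)$ supplies the $G_1$-edge. In the bipartite-with-bridge case, it first embeds $F_1$ anywhere in $G$ (possible since $G$ contains an $F$-copy), lets $s_1$ be the image of the bridge-endpoint, and then applies K\H{o}v\'ari--S\'os--Tur\'an to the bipartite graph between a linear-sized subset of $N_{G_1}(s_1)$ and the rest to find a $K_{r,r}$, hence an $F_2$-copy rooted at some $s_2\in N_{G_1}(s_1)$. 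In both cases the $G_1$-edge is produced \emph{by} the construction rather than fixed beforehand, which is exactly what circumvents the rooting difficulty you ran into.
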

\begin{proof} Note that this follows directly from the definition of $\delta_F$ when $t=1$. Suppose, then, that $t=e(F)$.
 \par Let $G$ be the graph with vertex set $[n]$ and edges exactly those which appear in at least $t$ graphs in $\fG$. We will now find an edge $e\in E(G_1)$ and a copy $F'$ of $F$ in $G+e$ containing $e$. Giving $e\in E(F')$ colour 1, and, greedily, giving each other edge of $F'$ (which is in $E(G)$) a distinct colour in $[m]\setminus \{1\}$ completes the required $(F,t)$-copy.

 It is left then to find $e$ and $F'$, which we do differently depending on whether $\delta(\fG)\geq (1/2+\eps)n$ (Case 1) or whether $F$ is a bipartite graph containing a bridge (Case 2). Note that, by Fact~\ref{fact}, if $\chi(F)\geq 3$ then $\delta_F\geq 1/2$, and thus these cases cover all eventualities.

\medskip

 \par \textbf{Case 1:} $\delta(\fG)\geq (1/2+\eps)n$.
 Then, as $1/m\ll \eps,1/t$, by Proposition~\ref{prop:alsodirac}, $\delta(G)\geq \max\{(1/2+\eps/2)n,(\delta_F+\eps/2)n\}$.
  Let $s=\chi(F)-1$, so that $\delta_F\geq 1-1/s$ by Fact~\ref{fact}. Take integers $r_1,r_2,\ldots,r_s$ so that $r_s=r$ and we have $1/n\ll 1/r_1\ll 1/r_2\ll \ldots \ll 1/r_s,\eps$.

 Now, for each $v\in [n]$,
 \[
 |N_G(v)\cap N_{G_1}(v)|\geq (1/2+\eps/2)n+(1/2+\eps)n-n=3\eps n/2,
 \]
 so that $e(G\cap G_1)\geq 3\eps n^2/4$.
Therefore, by Lemma~\ref{weakkst}, $G\cap G_1$ contains a copy of $K_2(r_1)$, say $K$. Recall that we have $\delta(G)\geq (1-1/s+\eps/2)n$. Thus, applying Lemma~\ref{es} for each $i=2,\ldots,s$ in turn, $G$ contains a copy of $K_{i+1}(r_i)$ whose intersection with $K$ is a copy of $K_2(r_i)$.

As $r_s=r$, this gives us that $G$ contains a a copy of $K_{s+1}(r)$ whose intersection with $K\subset G_1\cap G$ is a copy of $K_2(r)$. Note that this contains a copy of $F$, say $F'$, with at least one edge, $e$ say, in $K\subset G_1$, so that we have $e$ and $F'$ as required.

\medskip

 \par \textbf{Case 2:} $F$ is a bipartite graph containing a bridge.
  Then, as $1/n\ll \eps,1/t$, by Proposition~\ref{prop:alsodirac}, $\delta(G)\geq (\delta_F+\eps/2)n\geq \eps n/2$.
  Let $xy$ be a bridge of $F$, and let $F_1$ and $F_2$ each be a union of components of $F-xy$, choosing them to be vertex disjoint subgraphs and such that $F-xy=F_1\cup F_2$, $x\in V(F_1)$ and $y\in V(F_2)$. As $\delta(G)\geq (\delta_F+\eps/2)n$, the graph $G$ contains some copy of $F_1$, say $S_1$. Let $s_1$ be the copy of $x$ in $S_1$. Using that $\delta(G_1)\geq \eps n$, let $A\subset N_{G_1}(s_1)\setminus V(F_1)$ be a set with size $\eps n/4$, and let $B=[n]\setminus (V(S_1)\cup A)$. Let $G'$ be the bipartite graph with vertex classes $A$ and $B$ with edges those  in $G$ between $A$ and $B$. Note that, for each $v\in A$, it holds that $d_{G'}(v)\geq d_G(v)-|A|-|S_1|\geq \eps n/5$, so that $e(G')\geq \eps^2n^2/20$. Then, by Lemma~\ref{weakkst}, $G'$ contains a copy of $K_{r,r}$, and hence some copy of $F_2$, say $S_2$, in which $y$ is copied to some $s_2\in A$. Note that $e:=s_1s_2\in E(G_1)$, so that $F'=(S_1\cup S_2)+e$ is a copy of $F$ in $G+e$ containing $e$, as required.
\end{proof}

We use Lemma~\ref{lem:obstinate} iteratively to prove the following corollary, which we apply for \ref{step-3} from Section~\ref{sketch}.

\begin{corollary}\label{cor:to-iterate-fully-rainbow}
Let $1/n\ll \gamma \ll \eps,1/r$, let $F$ be an $r$-vertex graph and let $t\in \{1,e(F)\}$. Suppose $\fG=(G_1,\ldots,G_m)$ is a graph collection with vertex set $[rn]$, $m\geq 2t\gamma n$ and $\delta(\fG)\geq (\delta_F+\eps)rn$. Let $C\subset [m]$ satisfy $|C|\leq \gamma n$. Suppose that $t=1$, or $\delta(\fG)\geq (1/2+\eps)rn$, or $F$ has a bridge.

Then, there is a set $A\subset [rn]$ with $|A|=r\gamma n$ such that $\fG[A]$ contains an $(F,t)$-factor which uses every colour in $C$.
\end{corollary}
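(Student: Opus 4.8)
The plan is to build the $(F,t)$-factor on a set $A$ of $r\gamma n$ vertices greedily, one copy of $F$ at a time, processing the colours of $C$ first and then filling in the remaining copies, using Lemma~\ref{lem:obstinate} as the black box at each step. Write $C = \{c_1, \dots, c_s\}$ with $s = |C| \le \gamma n$, and let $\ell = \gamma n$ be the total number of copies of $F$ we need (so that the vertex set they cover has size $r\gamma n$). I would maintain, after having embedded $j$ vertex-disjoint copies $F_1, \dots, F_j$ of $F$ that are pairwise colour-disjoint, the invariant that at most $rj$ vertices have been used and at most $tj$ colours have been used, and that $\{c_1, \dots, c_{\min(j,s)}\}$ are among the used colours. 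At each step $j+1 \le \ell$, I restrict attention to the induced graph collection $\fG[U]$ on the set $U$ of unused vertices; since only $rj \le r\gamma n$ vertices are used and $\gamma \ll \eps, 1/r$, Proposition~\ref{prop:alsodirac}-type reasoning (or just the crude bound $\delta(G_i[U]) \ge \delta(G_i) - r\gamma n \ge (\delta_F + \eps/2)(rn - r\gamma n) \ge (\delta_F + \eps/2)|U|$) shows the minimum degree condition survives, and likewise the stronger $(1/2+\eps/2)|U|$ bound survives in the relevant case.

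For the colour bookkeeping, at step $j+1 \le s$ I want to force colour $c_{j+1}$ to be used: I relabel $\fG[U]$ so that $G_{c_{j+1}}$ plays the role of "$G_1$" and invoke Lemma~\ref{lem:obstinate} (with $m$ there being the number of still-unused colours, which is at least $m - tj \ge 2t\gamma n - t\gamma n = t\gamma n \ge$ the $m$-threshold of that lemma since $\gamma \ll \eps, 1/r$) to obtain a $t$-copy of $F$ in $\fG[U]$ using at least one edge from colour $c_{j+1}$; I then recolour its edges, if necessary, so that its remaining $t-1$ colours (when $t = e(F)$) or its single colour (when $t=1$, in which case $c_{j+1}$ is exactly the colour) are drawn from the currently unused colours — there are plenty since fewer than $tj + t \le t\gamma n$ colours are touched in total and $m \ge 2t\gamma n$. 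For $s < j+1 \le \ell$, the colour constraint is vacuous, so I simply apply the non-transversal $F$-factor statement / Lemma~\ref{lem:obstinate} again (picking any unused colour to play the role of $G_1$) to get another $t$-copy of $F$ on fresh vertices with fresh colours. After $\ell = \gamma n$ steps I have used exactly $r\gamma n$ vertices; setting $A$ to be that vertex set gives the claimed $(F,t)$-factor in $\fG[A]$ using every colour of $C$.

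The main obstacle is the simultaneous bookkeeping of vertices and colours, i.e.\ checking at every step that (i) the minimum-degree hypothesis of Lemma~\ref{lem:obstinate} still holds on the shrinking vertex set, and (ii) there remain enough unused colours both to absorb the new copy's colours and to keep applying the lemma — this is where the hypothesis $m \ge 2t\gamma n$ is used, giving slack $2t\gamma n - t\gamma n = t\gamma n$ that comfortably exceeds the $O(\gamma n)$ threshold implicit in $1/m \ll \eps, 1/r$ because $\gamma \ll \eps, 1/r$. A minor point to be careful about is the $t = e(F)$ recolouring: Lemma~\ref{lem:obstinate} guarantees a $t$-copy using colour $c_{j+1}$ on \emph{at least one} edge, and one must argue the remaining $t-1$ edges can be given distinct hitherto-unused colours; but this is immediate since the proof of Lemma~\ref{lem:obstinate} actually produces the copy from the "majority" graph $G$ of edges lying in $\ge t$ of the $G_i$, so each such edge has $\ge t$ colour options and the unused colours among $[m]$ still number $\ge m - t\gamma n \ge t\gamma n \gg t\ell$, making the greedy choice trivially possible. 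Everything else is routine.
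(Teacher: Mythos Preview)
Your approach is essentially the same as the paper's: both build the $\gamma n$ copies greedily, invoking Lemma~\ref{lem:obstinate} once per colour of $C$ on the remaining vertices and remaining colours, and both verify that the minimum-degree and colour-count hypotheses survive throughout. The paper packages the greedy as a maximality argument (take a largest $C'\subset C$ admitting such copies and derive a contradiction if $C'\neq C$), but this is cosmetic.

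One small point deserves tightening. When $t=e(F)\ge 2$ you allow the $t-1$ extra colours on $F_{j+1}$ to be any currently unused colours, which may include some $c_{j'}\in C$ with $j'>j+1$; your stated invariant then forces $c_{j'}$ at step $j'$, but it is already used and cannot appear on a new copy. The fix is exactly what the paper does: at each step apply Lemma~\ref{lem:obstinate} to the collection indexed by $\{c_{j+1}\}$ together with the unused colours in $[m]\setminus C$ (of which there are at least $m-|C|-t(j{-}1)\ge t\gamma n-\gamma n$, more than enough). With that restriction your argument goes through verbatim.
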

\begin{proof} Note that we can assume that $|C|=\gamma n$. Let $C'\subset C$ be a maximal set for which there are vertex disjoint subgraphs $F_i$, $i\in C'$, such that, for each $i\in C'$, the subgraph $F_i$ is an $(F,t)$-copy of $F$ in $\fG$ using colours in $\{i\}\cup ([m]\setminus C)$ which has some edge of colour $i$ and which uses no colours which appear on any $F_{i'}$, $i'\in C'\setminus \{i\}$.

Noting that we are done by simply letting $A=\cup_{i\in C'}V(F_i)$ if $C'=C$, for contradiction assume there is some $j\in C\setminus C'$. Let $B$ be the set of colours in $[m]\setminus C$ which are not used on any of $F_i$, $i\in C'$, and note that $|B|\geq m-t|C'|\geq t\gamma n$. Let $V=[rn]\setminus (\cup_{i\in C'}V(F_i))$, and note that $\delta(\fG[V])\geq \delta(\fG)-\eps rn/2$. Therefore, by Lemma~\ref{lem:obstinate}, there is an $(F,t)$-copy of $F$ in $\delta(\fG[V])$ using colours in $\{j\}\cup B$ which has some edge of colour $j$.
Thus, the subgraphs $F_i$, $i\in C'\cup\{j\}$, contradict the choice of $C'$.
\end{proof}

\subsection{Proof of Theorem~\ref{thm:unitedrainbowfactors}}\label{tilingproofs}
We can now prove Theorem~\ref{thm:unitedrainbowfactors}, again using the five steps in Section~\ref{sketch}.

\begin{proof}[Proof of Theorem~\ref{thm:unitedrainbowfactors}]
Let $\gamma,\beta$ and $\bar{\beta}$ satisfy $1/n\ll \gamma\ll \beta\ll \bar{\beta}\ll \alpha,1/r$.

If $t=1$, then let $\delta^T_F=\delta_F$. If $t=e(F)$, then let $\delta^T_F=1/2$ if $\delta_F\leq 1/2$ and $F$ does not have a bridge, and let $\delta^T_F=\delta_F$ otherwise. Let $\fG=(G_1,\ldots,G_{tn})$ be a graph collection with vertex set $[rn]$ for which $\delta(\fG)\geq (\delta^T_F+\alpha)rn$. To prove the theorem, it is then sufficient to find an $(F,t)$-factor in $\fG$.

Let $n_1=\beta n/2$, $n_3=\gamma n$, $n_4=\beta n$ and $n_2=n-n_1-n_3-n_4$. At each step $i$ in the outline in Section~\ref{sketch} we will find $n_i$ more vertex-disjoint copies of $F$. We begin by proving the following claim for \ref{step-1}.

\begin{claim}\label{clm:onecopy}
Given any set $U\subset [rn]$ with $|U|\leq n_1r$, there is a copy $F'$ of $F$ with vertices in $[rn]\setminus U$ such that $F'\subset G_j$ for at least $\bar{\beta} tn$ values of $j\in [tn]$.
\end{claim}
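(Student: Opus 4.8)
The plan is to find the copy $F'$ by passing to a random subset of $[rn]\setminus U$ of constant size, and then using a counting/pigeonhole argument over the (constantly many) possible copies of $F$ on that subset. First I would fix an integer $k$ with $1/n\ll 1/k\ll \bar\beta,1/r$ (in fact $1/k\ll\gamma$ is not needed here, only $1/k$ small relative to $\bar\beta$, $1/t$, $1/r$), and pick a set $W\subseteq [rn]\setminus U$ of size $rk$ uniformly at random. Since $|U|\le n_1 r=\beta nr/2$ is tiny compared to $rn$ and $\delta(\fG)\ge(\delta_F+\alpha)rn\ge \alpha rn$, for each colour $j\in[tn]$ the vertices of $[rn]\setminus U$ still have degree at least, say, $(\delta_F+\alpha/2)rn$ into $[rn]\setminus U$ in $G_j$; so by Proposition~\ref{lem:hypergeom-app} (applied with $V=[rn]\setminus U$, the parameter $\delta$ there taken to be $\delta_F+\alpha/2$, and $\delta'=\delta_F+\alpha/4$, choosing an arbitrary $w\in W$ to absorb into the bound) the probability that $\delta(G_j[W])<(\delta_F+\alpha/4)rk$ is at most $e^{-(\alpha/4)^2 rk}$, which is at most $1/2$ since $1/k\ll\alpha,1/r$. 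Hence, in expectation, at least $tn/2$ colours $j$ have $\delta(G_j[W])\ge(\delta_F+\alpha/4)rk$, so we may fix a set $W$ and a set $J\subseteq[tn]$ with $|J|\ge tn/2$ such that $\delta(G_j[W])\ge(\delta_F+\alpha/4)rk$ for all $j\in J$.

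Now for each $j\in J$, since $1/k\ll\alpha,1/r$ and $(\delta_F+\alpha/4)rk$ exceeds the threshold $(\delta_F+\alpha/8)|W|$ needed to force an $F$-factor on $rk$ vertices, $G_j[W]$ contains an $F$-factor, and in particular a copy of $F$. There are at most $(rk)!$ distinct copies of $F$ on the vertex set $W$, and this is a constant depending only on $k$ and $r$. Hence, since $1/n\ll 1/k,1/r,\bar\beta,1/t$, by pigeonhole there is a single copy $F'$ of $F$ with $V(F')\subseteq W\subseteq[rn]\setminus U$ such that $F'\subseteq G_j$ for at least $|J|/(rk)!\ge tn/(2(rk)!)\ge\bar\beta tn$ values of $j\in J\subseteq[tn]$, as required.

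The only point requiring care is bookkeeping the constants in the two hierarchies: we need $1/k$ small enough (relative to $\alpha$, $1/r$, $1/t$) for both the concentration bound and the $F$-factor existence to go through on a $W$ of size $rk$, and then $n$ large enough (relative to $k$, $r$, $t$, and $1/\bar\beta$) so that $tn/(2(rk)!)\ge\bar\beta tn$; since $\bar\beta$ is a fixed constant below $\alpha,1/r$ and $k$ is fixed once $\alpha,r,t$ are, both are achievable, and this is the main (and essentially only) obstacle. Note also that $F'$ depends on $U$, but this is exactly what \ref{step-1} needs: the claim is applied afresh each time we must avoid an already-embedded vertex set.
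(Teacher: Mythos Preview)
Your approach matches the paper's almost exactly: take a random subset $W$ of constant size $rk$ inside $[rn]\setminus U$, use Proposition~\ref{lem:hypergeom-app} to show that at least $tn/2$ colours $j$ satisfy $\delta(G_j[W])\ge(\delta_F+\alpha/4)rk$, find a copy of $F$ in each such $G_j[W]$, and pigeonhole over the constantly many copies of $F$ on $W$ (the paper uses the bound $(rk)^r$ rather than $(rk)!$, but either works).

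The one slip is in your hierarchy. You write $1/k\ll\bar\beta$ and later claim that taking $n$ large yields $tn/(2(rk)!)\ge\bar\beta tn$; but this inequality simplifies to $\bar\beta\le 1/(2(rk)!)$, which is independent of $n$ and in fact forces $\bar\beta\ll 1/k$, the \emph{opposite} of what you wrote. The paper fixes this by choosing $k$ with $\bar\beta\ll 1/k\ll\alpha,1/r$ (possible since $\bar\beta\ll\alpha,1/r$ is already given in the ambient hierarchy). With that correction your argument goes through verbatim.
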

\begin{proof}[Proof of Claim~\ref{clm:onecopy}] Let $k$ satisfy $\bar{\beta}\ll 1/ k\ll \alpha,1/r$ and note that $\delta(\fG([n]\setminus U]))\geq (\delta^T_F+\alpha/2)rn$. Take a random subset $V\subset [rn]\setminus U$ with size $rk$ and let $J\subset [m]$ be the set of $j\in [tn]$ such that $\delta(G_j[V])\geq (\delta^T_F+\alpha/4)rk$. Note that, by Lemma~\ref{lem:hypergeom-app},  $\E|J|\geq tn/2$, and therefore we can find such a set $V$ for which $|J|\geq tn/2$. For each $j\in J$, there is some copy $F_j$ of $F$ in $G_j[V]$. As $\bar{\beta} \ll 1/k$, and there are at most $(rk)^{r}$ copies of $F$ with vertices in $V$, there is some copy $F'$ of $F$ in $\cup_{i\in [tn]}G_i[V]$ for which $F'\subset G_j$ for at least $\bar{\beta} tn$ values of $j\in J\subset [tn]$.
\end{proof}

We now carry out our proof using the five steps sketched in Section~\ref{sketch}.

\medskip

\noindent\ref{step-1}.  Using Claim~\ref{clm:onecopy}, iteratively find vertex-disjoint copies $F_1,\ldots,F_{n_1}$ of $F$ in $\cup_{i\in [tn]}G_i$ such that, for each $i\in [n_1]$, it holds that $F_i\subset G_j$ for at least $\bar{\beta} tn$ values of $j\in [tn]$. Let $K$ be the auxiliary bipartite graph with vertex classes $[n_1]\times [t]$ and $[tn]$, where there is an edge between $(i,i')$ and $j$ exactly if $F_i\subset G_j$.

By Lemma~\ref{lem:absorb}, there are disjoint subsets $A,C\subset [tn]$ with $|A|=tn_1-\gamma n$ and $|C|\geq 2\beta tn$, such that the following holds.
\begin{enumerate}[label = \textbf{Q}]
\item \label{keyprop2} Given any set $U\subset A\cup C$ with $A\subset U$ and $|U|=tn_1$ there is a perfect matching between $[n_1]\times [t]$ and $A\cup U$ in $K$.
\end{enumerate}
By removing elements of $C$, assume that $|C|=tn_4+\gamma n+(t-1)n_3$.

\medskip

\noindent\ref{step-2}. Let $V_1=\cup_{i\in [n_1]}V(F_i)$.
Note that, for each $v\in [n]$ and $i\in [tn]$,
\[
d_{G_i}(v,[rn]\setminus V_1)\geq (\delta^T_F+\alpha)rn-|V_1|\geq (\delta^T_F+\alpha/2)rn\geq (\delta^T_F+\alpha/2)r(n_2+n_3+n_4).
\]
Therefore, by Lemma~\ref{lem:newdivide1}, there is a partition $[n]\setminus V_1=V_2\cup V_3$ with $|V_2|=rn_2$ and $|V_3|=r(n_3+n_4)$ such that, for each $j\in \{2,3\}$, $v\in [n]$ and $i\in [tn]$, we have $d_{G_i}(v,V_j)\geq (\delta^T_F+\alpha /4)|V_j|$.
Note that
\begin{equation}\label{eqn:usefulll}
|[tn]\setminus (A\cup C)|=tn-(tn_1-\gamma n)-(tn_4+\gamma n+(t-1)n_3)=tn_2+n_3\geq (1+\gamma^2)tn_2.
\end{equation}
Therefore, by Lemma~\ref{lem:to-iterate-fully-rainbow}, $\fG[V_2]$ contains a rainbow $(F,t)$-factor $\hat{F}_2$ using colours in $[m]\setminus (A\cup C)$.

\medskip

\noindent\ref{step-3}. Let $B$ be the set of colours in $[m]\setminus (A\cup C)$ not used for  $\hat{F}_2$, and note that
\[
|B|=|[tn]\setminus(A\cup C)|-tn_2\overset{\eqref{eqn:usefulll}}{=}(tn_2+n_3)-tn_2=n_3.
\]
As, for each $v\in [n]$ and $i\in [m]$, we have $d_{G_i}(v,V_3)\geq (1/2+\alpha /4)|V_3|$, and $|C|\geq tn_4$, by Corollary~\ref{cor:to-iterate-fully-rainbow}, there is a set $V_3'\subset V_3$ with size $rn_3$ such that $\fG[V_3']$ contains a rainbow $(F,t)$-factor $\hat{F}_3$
 using colours in $B\cup C$ such that every colour in $B$ appears on at least one edge.

\medskip

\noindent\ref{step-4}. Let $V_4=V_3\setminus V(\hat{F}_3)$, so that $|V_4|=rn_4$. Note that, for each $v\in [n]$ and $i\in [m]$, as $\gamma\ll \beta$,
\[
d_{G_i}(v,V_4)\geq (1/2+\alpha /4)r(n_3+n_4)-rn_3\geq (1/2+\alpha /8)rn_4.
\]
Let $\tilde{C}$ be the set of colours in $C$ not used in $\hat{F}_3$. Note that $A\cup \tilde{C}=tn-tn_2-tn_3=tn_1+tn_4$, and therefore $|\tilde{C}|=tn_4+\gamma n$. Therefore, by Lemma~\ref{lem:to-iterate-fully-rainbow}, as $\gamma\ll \beta$, there is a rainbow $(F,t)$-factor $\hat{F}_4$ using colours in $\tilde{C}$.

\medskip

\noindent\ref{step-5}. Let $B'$ be the set of colours in $[m]$ not used on $\hat{F}_2\cup \hat{F}_3\cup \hat{F}_4$, so that $A\subset B'$ and $|B'|=tn_1$. Using \ref{keyprop2}, colour $\hat{F}_1=F_1\cup \ldots\cup F_{n_1}$
so that this forms an $(F,t)$-factor of $\fG[V_1]$ using colours in $B'$. Then, $\hat{F}_1\cup \hat{F}_2\cup \hat{F}_3\cup \hat{F}_4$ is an $(F,t)$-factor in $\fG$, as required.
\end{proof}


\section{Factors with a specified colouring}\label{sec:withspecifiedcolouring}
As mentioned in Section~\ref{sec:intro}, we now discuss the following strengthened problem for factors. Given a graph $H$ formed of $n$ vertex-disjoint copies of a fixed $r$-vertex $t$-edge graph $F$, and an injective colouring function $\phi(E(H))\to [tn]$, what minimum degree conditions on a graph collection $\fG=(G_1,\ldots,G_{tn})$ with vertex set $[rn]$ suffice to find a copy of $H$ in $\fG$ in which the copy of each edge $e\in E(H)$ has colour $\phi(e)$? Note that we consider only injective functions $\phi$ for convenience, and by including graphs multiple times in $\fG$ we can extend to any function $\phi(E(H))\to [tn]$.

To discuss this problem, let us first define the following minimum degree threshold for $F$-factors in $r$-partite graphs.

\begin{definition} Given a graph $F$ with vertex set $[r]$, let $\delta^p_F$ be the smallest real number $\delta$ such that, for each $\eps>0$, there exists $n_0$ such that, for every $n\geq n_0$, the following holds.

Let $H$ be a balanced $r$-partite graph with vertex classes $V_1,...,V_r$ of size $n$. Suppose that, for each $ij\in E(F)$, we have $\delta(H[V_i \cup V_j])\ge (\delta+\eps)n$. Then, $H$ contains an $F$-factor in which every copy of $F$ has its $i$-th vertex in $V_i$.
\end{definition}

The following result follows by modifying the techniques in Section~\ref{sec:factors} only slightly (as described below).

\begin{theorem}\label{withspecifiedcolouring}
Let $0<1/n\ll \eps,1/r$ and let $F$ be an $r$-vertex graph with $t$ edges. Let $H$ be the disjoint union of $n$ copies of $H$ and let $\phi:E(H)\to [tn]$ be a bijection.

Suppose $\fG$ is a graph collection with vertex set $[rn]$ with $|\fG|=tn$ and $\delta(\fG)\geq \left(\delta^p_F+\eps\right)rn$. Then $\fG$ contains a copy $H'$ of $H$ such that $e\in G_{\phi(e)}$ for each $e\in E(H')$.
\end{theorem}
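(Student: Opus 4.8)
The plan is to run the five-step absorption scheme of the proof of Theorem~\ref{thm:unitedrainbowfactors}, but with the role played there by colours now played by the set $[n]$ of \emph{copy-labels}, i.e.\ the $n$ copies of $F$ making up $H$. Throughout we fix one random balanced partition $[rn]=W_1\sqcup\dots\sqcup W_r$ with $|W_i|=n$ and, using Proposition~\ref{lem:hypergeom-app} and a union bound over all colours and all pairs of classes, we may assume $\delta(G_c[W_i\cup W_j])\ge(\delta^p_F+\eps/2)n$ for every colour $c$ and every $ij\in E(F)$. We shall only ever embed the $i$-th vertex of a copy of $F$ into $W_i$. For $s\in[n]$ and $ij\in E(F)$ write $c^{ij}_s:=\phi(\text{the }ij\text{-edge of the }s\text{-th copy of }F\text{ in }H)$; call a tuple $(w_1,\dots,w_r)$ with $w_i\in W_i$ \emph{$s$-good} if it spans a copy of $F$ with $w_iw_j\in E(G_{c^{ij}_s})$ for all $ij\in E(F)$. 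The task is precisely to find $n$ vertex-disjoint tuples, one $s$-good tuple for each $s\in[n]$.

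The three auxiliary statements behind the five steps all transfer with only notational changes. The analogue of Lemma~\ref{lem:to-iterate-fully-rainbow} (for Steps~2 and~4): if $V'$ is a balanced union of subsets of the $W_i$ on $rN$ vertices satisfying the same partite density condition, and $L$ is a pool of at least $(1+\eta)N$ labels, then $\fG[V']$ contains $N$ vertex-disjoint tuples realising $N$ distinct labels of $L$. This is proved exactly as Lemma~\ref{lem:to-iterate-fully-rainbow}: split $V'$ into constant-size balanced chunks by a partite version of Lemma~\ref{lem:partition} (so that in each chunk all but $\delta_0 N$ of the labels of $L$ have good induced partite degrees, taking the chunk size large); for each such label $s$, the definition of $\delta^p_F$ yields an $s$-good partite $F$-factor of the chunk; since a chunk admits only $O_k(1)$ partite $F$-factors, the pigeonhole principle gives one partite $F$-factor of the chunk that is $s$-good for at least (current pool size)$/O_k(1)$ labels $s$, and we label its $\Theta(k)$ copies with that many fresh labels of $L$ before iterating over the chunks, deleting used labels each time. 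The analogue of Claim~\ref{clm:onecopy} (for Step~1): a random constant-size balanced chunk has all but $\delta_0 n$ labels good and only $O_k(1)$ partite $F$-copies, so some $F$-copy in it is $s$-good for $\Omega(n)$ labels $s$; iterating over disjoint chunks produces vertex-disjoint $F$-copies $F_1,\dots,F_{n_1}$ with $n_1=\Theta(\beta n)$, each $s$-good for $\Omega(n)$ labels, and we feed the bipartite ``copy--label'' graph (parts $[n_1]$ and $[n]$, edge $(k,s)$ when $F_k$ is $s$-good) into Lemma~\ref{lem:absorb} to obtain disjoint $A,C\subseteq[n]$ with the usual absorbing property. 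Finally, the analogue of Corollary~\ref{cor:to-iterate-fully-rainbow} (for Step~3) is immediate: to realise a prescribed label $s$ inside a linear-sized region from which only $o(n)$ vertices have been deleted, restrict to the unused part and apply the definition of $\delta^p_F$ to the colour graphs $\{G_{c^{ij}_s}\}_{ij\in E(F)}$ there to get an $s$-good $F$-factor, and keep one of its copies; the bridge/monochromatic case analysis of Lemma~\ref{lem:obstinate} is not needed.

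With these in hand, Theorem~\ref{withspecifiedcolouring} is assembled exactly as Theorem~\ref{thm:unitedrainbowfactors}: partition $[n]$ into label-sets of sizes $n_1,n_2,n_3,n_4$ with $n_1,n_4=\Theta(\beta n)$, $n_3=\Theta(\gamma n)$; in Step~1 build the absorber on a set $V_1$ of $rn_1$ vertices and trim $C$ so that $|[n]\setminus(A\cup C)|=n_2+n_3$; split the remaining $rn-rn_1$ vertices via a partite version of Lemma~\ref{lem:newdivide1} and carry out Steps~2 and~4 with the surplus-label lemma (using most of $[n]\setminus(A\cup C)$, then the still-unconsumed part of $C$), Step~3 with the prescribed-label lemma (using the last $n_3$ labels of $[n]\setminus(A\cup C)$), and Step~5 by placing $F_1,\dots,F_{n_1}$ with the leftover labels of $A\cup C$ via the absorbing property.

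The one genuinely new point — and the reason $\delta^p_F$, rather than something larger, is the correct threshold — is that an embedded copy of $F$ can be assigned a label $s$ only when \emph{all} $t$ graphs $G_{c^{ij}_s}$ simultaneously contain the relevant edges, so one never controls the individual colour graphs, only the fact that after the random $r$-partition every $G_c[W_i\cup W_j]$ is dense between its classes. The place where this is used is the pigeonhole step of the surplus-label lemma: it must produce a partite $F$-factor of a chunk that is $s$-good for \emph{many} labels at once (so that it can then be labelled freely), and it is exactly the definition of $\delta^p_F$, applied inside a chunk separately for each fixed label, that supplies the requisite $s$-good factors before the pigeonhole is invoked. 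Everything else is bookkeeping parallel to Section~\ref{sec:factors}.
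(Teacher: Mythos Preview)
Your proposal is correct and takes essentially the same approach as the paper: both replace the $n$ colours in the $t=1$ case of Theorem~\ref{thm:unitedrainbowfactors} by the $n$ patterns (your copy-labels), observe that Step~3 simplifies to a direct application of the definition of $\delta^p_F$ with no bridge/case analysis (this is the paper's Lemma~\ref{lem:obstinate2}), and note that the random-subset arguments behind Claim~\ref{clm:onecopy} and Lemma~\ref{lem:to-iterate-fully-rainbow} go through once one checks that all $t$ colours belonging to a given pattern simultaneously inherit good induced degree. The only organisational difference is that you fix a single global balanced $r$-partition $W_1,\dots,W_r$ at the outset and work partitely throughout, whereas the paper defers the $r$-partitioning to each local application (inside Lemma~\ref{lem:obstinate2} and its invocations); this is a cosmetic choice and both versions work.
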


We remark that a simple random partitioning argument shows that $\delta^p_F\geq \delta_F$, and by a result in \cite{keevash-mycroft} (see also \cite{allan}) it follows that $\delta^p_{K_r}=\delta_{K_r}=1-1/r$. A more recent result concerning cycles (see \cite{beka}) shows that $\delta^p_{C_k}=\left(1+1/k\right)/2$. More generally, the exact value of $\delta^p_F$ is largely unknown. Furthermore, we do not know in general whether the constant $\delta^p_F$ in Theorem~\ref{withspecifiedcolouring} is tight.

We now sketch the changes necessary to the proofs in Section~\ref{sec:factors} in order to prove Theorem~\ref{withspecifiedcolouring}.
For this, it is most convenient to consider how we found $F$-factors where each copy of $F$ in $\fG$ had a different colour from $[n]$ (i.e., the case of Theorem~\ref{thm:unitedrainbowfactors} when $t=1$). Instead of $n$ different colours, suppose we have $n$ \emph{patterns} $\phi_i:E(F)\to [tn]$, $i\in [n]$, where each $\phi_i$ is an injection and $\phi_i(E(F))$, $i\in [n]$, partition $[tn]$. Then, given a graph collection with vertex set $[rn]$ with $|\fG|=tn$ and $\delta(\fG)\geq \left(\delta^p_F+\eps\right)rn$, we wish to find $n$ vertex-disjoint copies, $F_1,\ldots,F_n$, of $F$ in $\fG$, so that each $e\in E(F_i)$ has colour $\phi_i(e)$.
This allows us to follow the proof of Theorem~\ref{thm:unitedrainbowfactors} with $t=1$ while replacing the $n$ colours $1,\ldots,n$ with the $n$ patterns $\phi_1,\ldots,\phi_n$. Instead of repeating this at length, we highlight only the two main adaptations required. The first is that in place of Lemma~\ref{lem:obstinate}, we need the following (much simpler) lemma to use the unused patterns outside the absorber at \ref{step-3}.

\begin{lemma}\label{lem:obstinate2} Let $1/n\ll \eps,1/r$. Let $F$ be an $r$-vertex graph with $t$ edges and let $\phi:E(F)\to [t]$ be an injection.
Let $\fG=(G_1,\ldots,G_{t})$ be a graph collection on vertex set $[rn]$ with $\delta(\fG)\geq (\delta^p_F+\eps)rn$. Then $\fG$ contains $n$ vertex disjoint copies of $F$ in which the copy of each edge $e\in E(F)$ has colour $\phi(e)$.
\end{lemma}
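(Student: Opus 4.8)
The plan is to reduce directly to the definition of $\delta^p_F$. The key observation is that Lemma~\ref{lem:obstinate2} is not really a transversal statement at all: we have exactly $t$ graphs $G_1,\dots,G_t$ in the collection, and the pattern $\phi$ simply prescribes, for each edge $e$ of $F$, that its copy must come from $G_{\phi(e)}$. So the task is to find $n$ vertex-disjoint copies of $F$ on $[rn]$ such that for each copy, the edge playing the role of $e\in E(F)$ is an edge of $G_{\phi(e)}$. This is precisely the kind of $r$-partite-type constraint that $\delta^p_F$ controls, once we set things up correctly.

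First I would fix an arbitrary proper vertex-labelling: think of $V(F)=[r]$, and we want to find the $n$ copies of $F$ together with a partition $[rn]=W_1\cup\dots\cup W_r$ with $|W_i|=n$, where each copy of $F$ has its $j$-th vertex in $W_j$. (Such a partition is not given to us, so strictly we want an $F$-factor of the appropriate auxiliary graph without a prescribed partition — but we can guess the partition by a random partitioning argument, or, more cleanly, build the auxiliary graph so that the partition is part of the output, exactly as $\delta^p_F$ is phrased to produce the partition.) Define an auxiliary graph $H$ on $[rn]$ as follows: for each edge $ij\in E(F)$, we place into $H$ (the relevant bipartite part of) the edges of $G_{\phi(ij)}$. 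Since $\delta(\fG)\ge(\delta^p_F+\eps)rn$, every $G_\ell$ has minimum degree at least $(\delta^p_F+\eps)rn$, and in particular for each $ij\in E(F)$ and each potential bipartition into classes of size $n$, the graph $G_{\phi(ij)}$ restricted to those two classes has minimum degree at least $(\delta^p_F+\eps/2)n$ — this is where a short hypergeometric/random-partition estimate (as in Proposition~\ref{lem:hypergeom-app} or Lemma~\ref{lem:newdivide1}) is used to pass from the global minimum degree on $[rn]$ to the bipartite minimum degree between classes of size $n$.

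Concretely, I would take a uniformly random partition $[rn]=W_1\cup\dots\cup W_r$ into classes of size $n$. By Proposition~\ref{lem:hypergeom-app} (applied once for each of the $t$ pairs $ij\in E(F)$, each vertex, and each relevant class), with positive probability we have $\delta\big(G_{\phi(ij)}[W_i\cup W_j]\big)\ge(\delta^p_F+\eps/2)n$ for every $ij\in E(F)$ simultaneously. Fix such a partition. Now let $H$ be the $r$-partite graph with classes $W_1,\dots,W_r$ in which, for each $ij\in E(F)$, the pair $W_i\cup W_j$ carries exactly the edges of $G_{\phi(ij)}[W_i\cup W_j]$. Then $H$ satisfies the hypothesis in the definition of $\delta^p_F$ (with $\eps/2$ in place of $\eps$), so since $1/n\ll\eps,1/r$, $H$ contains an $F$-factor in which every copy of $F$ has its $i$-th vertex in $W_i$. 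Unwinding the definition of $H$, each such copy of $F$ uses, for the edge playing the role of $ij$, an edge of $G_{\phi(ij)}$ — that is, the copy of each $e\in E(F)$ has colour $\phi(e)$, as required.

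The only subtlety — and the step I expect to be the main (minor) obstacle — is the bookkeeping in the random-partition step: one must check that conditioning on the partition does not interfere with the degree concentration, and that the error probabilities summed over all $O(trn)$ events are still $o(1)$, which follows since $1/n\ll\eps,1/r,1/t$. Everything else is a direct appeal to the definition of $\delta^p_F$, so this lemma is genuinely much simpler than Lemma~\ref{lem:obstinate}, whose difficulty came entirely from having to \emph{insert} a single prescribed colour into an otherwise free construction; here there is nothing to insert, only a rigid pattern to realise.
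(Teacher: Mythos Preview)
Your proposal is correct and follows essentially the same approach as the paper: partition $[rn]$ into balanced classes $W_1,\dots,W_r$ with good degree into each class for every $G_j$, build the $r$-partite auxiliary graph whose $W_iW_j$-edges (for $ij\in E(F)$) come from $G_{\phi(ij)}$, and invoke the definition of $\delta^p_F$. The paper simply cites Lemma~\ref{lem:newdivide1} for the partition step rather than redoing the hypergeometric union bound, but the content is identical.

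One small notational point: the condition you actually need is $d_{G_{\phi(ij)}}(v,W_j)\ge(\delta^p_F+\eps/2)n$ for every $v\in W_i$ (and symmetrically), i.e.\ degree into the \emph{opposite} class, not $\delta\big(G_{\phi(ij)}[W_i\cup W_j]\big)$ as written, since the latter might be met by edges inside $W_i$ which do not survive in the $r$-partite auxiliary graph $H$. Your description of the union bound (``each vertex, and each relevant class'') and your citation of Lemma~\ref{lem:newdivide1} both give the correct per-class degree condition, so this is only a slip in the displayed inequality, not in the argument.
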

\begin{proof}
By Lemma~\ref{lem:newdivide1}, there is a partition $[rn]=\cup_{t\in V(F)}V_t$ such that $|V_t|=n$ for each  $t\in V(F)$ and $d_{G_j}(v,V_t)\geq (\delta^p_F+\eps/2)n$ for each  $t\in V(F)$, $v\in [n]$ and $j\in [t]$. Let $G$ be the graph with vertex set $[rn]$ where there is an edge between $v\in V_s$ and $w\in V_t$ if and only if $st \in E(F)$ and $vw\in E(G_{\phi(st)})$. Then, as $1/n\ll \eps,1/r$, by the definition of $\delta^p_F$, the graph $G$ contains an $F$-factor in which every copy of $F$ has each vertex $t\in V(F)$ copied into $V_t$. Noting that each $e\in E(F)$ lies in $G_{\phi(e)}$ shows that these copies of $F$ have the required property.
\end{proof}

The second adaptation is required in considering the colours $i$ which maintain good minimum degree in $G_i[V]$ for a random vertex set $V$. Instead, each time we need to consider which patterns $\phi_i$ have good minimum degree in $G_j[V]$ for each  colour $j$ in the pattern $\phi_i$ (i.e., for each $j\in \phi_i(E(F))$). However, as each pattern only has $t$ colours, it will still hold that a very large proportion of patterns will satisfy this. This would allow us to apply Lemma~\ref{lem:obstinate2} in place of invoking the definition of $\delta_F$ in the proof of Lemma~\ref{lem:to-iterate-fully-rainbow} and Claim~\ref{clm:onecopy}. Hence, Lemma~\ref{lem:obstinate2} also allows us to perform \ref{step-2} in the proof.

Finally, using an example related to that given in~\cite{lamaison2020colored}, we show that the minimum degree bound in Theorem~\ref{withspecifiedcolouring} needs to be larger than that used in Theorems~\ref{thm:fullyrainbowfactors} and~\ref{thm:monocopies} in certain cases. For each $\eps>0$, we show that there exist graph collections of minimum degree at least $(1-\eps)n$ which do not contain every colouring of a sufficiently large complete bipartite graph.

\begin{proposition}\label{prop:noeasygeneralisation} Let $1/n\ll 1/t \ll \eps$. There exists a graph collection $\fG$ on vertex set $[n]$ with $|\fG|=t^2$ and a function $\phi:e(K_{t,t})\to [t^2]$ such that $\delta(\fG)\ge (1-\eps)n$, and $\fG$ contains no copy of $K_{t,t}$ in which the copy of each edge $e\in E(K_{t,t})$ has colour $\phi(e)$.
\end{proposition}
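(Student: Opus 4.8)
The plan is to build $\fG$ from a careful arrangement of complete bipartite-like graphs so that the colouring $\phi$ forces a ``cyclic'' obstruction. Index the $t^2$ colours as pairs $(a,b)\in[t]\times[t]$, and index the two sides of $K_{t,t}$ as $\{x_1,\dots,x_t\}$ and $\{y_1,\dots,y_t\}$, so that the edge $x_iy_j$ is assigned colour $\phi(x_iy_j)=(i,j)$. Thus colour $(a,b)$ is used by exactly one edge of $K_{t,t}$, namely $x_ay_b$. Now partition $[n]$ into $t$ nearly-equal blocks $W_1,\dots,W_t$. For the graph $G_{(a,b)}$ of colour $(a,b)$, take it to be the complete graph on $[n]$ \emph{minus} all edges between $W_a$ and $W_{b+1}$ (indices mod $t$); equivalently $G_{(a,b)}$ contains the edge $uv$ unless $u\in W_a$ and $v\in W_{b+1}$ (or vice versa). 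Since we delete edges between only one pair of blocks, $\delta(G_{(a,b)})\ge n - |W_{b+1}| - 1 \ge (1-1/t)n - 1 \ge (1-\eps)n$, as $1/t\ll\eps$.

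The key step is to show $\fG$ admits no copy of $K_{t,t}$ with the prescribed colour pattern. Suppose for contradiction that we have an embedding sending $x_i\mapsto u_i$ and $y_j\mapsto v_j$, all distinct, with $u_iv_j\in E(G_{(i,j)})$ for all $i,j\in[t]$. For each $i$, let $c(i)\in[t]$ be the block containing $u_i$, i.e.\ $u_i\in W_{c(i)}$; similarly let $d(j)$ be the block of $v_j$. The edge constraint $u_iv_j\in G_{(i,j)}$ says precisely that we \emph{cannot} have ($u_i\in W_i$ and $v_j\in W_{j+1}$) and we cannot have ($u_i\in W_{j+1}$ and $v_j\in W_i$); that is, NOT both $c(i)=i$ and $d(j)=j+1$, and NOT both $c(i)=j+1$ and $d(j)=i$. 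Apply the first condition with $j=i$: we cannot have $c(i)=i$ and $d(i)=i+1$ simultaneously, for every $i$. Apply the second condition with the roles chosen so that $j+1=c(i)$: taking $j$ with $j+1=c(i)$, we need $d(j)\ne i$ whenever $c(i)=j+1$. The aim is to chain these local constraints around the cycle $1\to2\to\cdots\to t\to1$ and derive a contradiction from counting (a pigeonhole/parity argument on how the maps $c,d:[t]\to[t]$ can interlock), exploiting that each colour class forbids exactly one block-pair.

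Concretely, I would argue as follows. Consider the functions $c,d\colon [t]\to[t]$. From the ``$j=i$'' instance of the first forbidden configuration, for each $i$ either $c(i)\ne i$ or $d(i)\ne i+1$. I claim one can moreover force, via the second forbidden configuration applied across suitable indices, that $d(j)=j+1$ cannot be avoided for all $j$ simultaneously while keeping the $v_j$ distinct and consistent with $c$. The cleanest route: show that the constraints are equivalent to saying that the bipartite ``block pattern'' $(i,j)\mapsto(c(i),d(j))$ must avoid, for every $i$, the point $(i,i+1)$ in coordinate $(i,i)$ and must avoid $(j+1,i)$ in coordinate $(i,j)$; then a direct case analysis (or an averaging argument over the $t$ rotations, since the construction is rotation-symmetric) shows no pair $(c,d)$ works. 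I expect the main obstacle to be pinning down exactly this combinatorial core — choosing the deleted block-pairs (the ``$b+1$'' shift) so that the forbidden configurations genuinely wrap around the cycle with no consistent escape, rather than leaving some trivial valid assignment such as $c\equiv\text{const}$; getting the shift and the indexing of $\phi$ aligned so that every constant or near-constant choice of $c$ is killed by some colour constraint is the delicate part, and once that is arranged the degree bound and the contradiction are both routine.
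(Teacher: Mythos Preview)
Your construction does not work: the assignment $c(i)=i$, $d(j)=j$ (that is, embed $u_i\in W_i$ and $v_j\in W_j$, choosing $u_i\ne v_i$ when $i=j$) satisfies every constraint. Indeed, for the edge $x_iy_j$ we need $u_iv_j\in E(G_{(i,j)})$, and the only forbidden configuration is one endpoint in $W_i$ and the other in $W_{j+1}$; since $v_j\in W_j$ with $j\ne j+1$, and (when $j=i$) $u_i\in W_i$ with $i\ne i+1$, the edge is never deleted. So your $\fG$ \emph{does} contain a $\phi$-coloured copy of $K_{t,t}$. You anticipated that getting the shift right would be delicate, but in fact no shift of this type can work: with $t$ blocks and $t$ vertices on each side, you can always spread the images one-per-block and the single forbidden block-pair per colour is too weak to create a global obstruction.

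The paper's proof avoids this by taking \emph{far fewer} blocks than $t$: it partitions $[n]$ into $k+1$ parts with $1/t\ll 1/k\ll\eps$, so by pigeonhole any embedding of $K_{t,t}$ must place at least $t/(k+1)$ vertices of each side into some single block. It then assigns to each edge $e$ of $K_{t,t}$ a random label $\psi(e)\in[k]$; a union bound shows that between any two subsets of size $\ge t/10k$ on opposite sides, all $k$ labels appear. Finally, decomposing $K_{k+1}$ into $k$ perfect matchings $M_1,\dots,M_k$, the graph $G_i$ is the complete $(k+1)$-partite graph minus the block-pairs in $M_{\psi(e_i)}$. The two crowded blocks $A_j,A_{j'}$ lie in some $M_\ell$, and the random property guarantees an edge between them carrying label $\ell$, which is forbidden. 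The essential idea you are missing is this interplay between a \emph{small} number of blocks (forcing concentration) and a \emph{random} edge-labelling (forcing every large bipartite patch to contain a bad edge).
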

\begin{proof} Let $k$ be an odd integer such that $1/t\ll 1/k\ll \eps$ and suppose $K_{t,t}$ has vertex classes $A$ and $B$ and edges $\{e_1,\ldots,e_{t^2}\}$. For each edge $e\in K_{t,t}$, select $\psi(e)$ uniformly at random from $[k]$. Given any two sets $A'\subset A$ and $B'\subset B$ such that $|A'|,|B'|\geq \ell:=t/10k$, the probability there is some $j\in [k]$ and no edge $e$ between $A'$ and $B'$ with $\psi(e)=k$ is at most $k(1-1/k)^{\ell^2}\leq k\exp(-\ell^2/k)=\exp(-\Omega(t^2/k^4))=o(e^{-2t})$ as $1/t\ll 1/k$.  As there are certainly at most $2^{2t}$ pairs of sets $A'\subset A$ and $B'\subset B$ of this size, by a union bound we can assume that $\psi:E(K_{t,t})\to [k]$ has the following property.

\begin{itemize}
    \item[\textbf{R}] For any $A'\subseteq A$ and $B'\subseteq B$ such that $|A'|,|B'|\geq t/10k$, we have $\psi(A'\times B')=[k]$ .
\end{itemize}

 We now construct a graph collection $\fG=(G_1,...,G_{t^2})$ as follows. Let $A_1,\ldots, A_{k+1}$ be a nearly balanced partition of $[n]$. Note that, as $k+1$ is even, $K_{k+1}$ admits a decomposition into $k$ perfect matchings $M_1,\ldots,M_k$. Now, for each $i\in [t^2]$, let $G_i$ be the graph obtained by removing all edges $e\in A_{j}\times A_{j'}$ where $\{j,j'\}\in M_{\psi(e_i)}$ from the complete $(k+1)$-partite graph with vertex partition $A_1,\ldots, A_{k+1}$. As $1/k\ll \eps$, it holds that $\delta(G_i)\geq (1-\eps)n$ for each $i\in [t^2]$.

For each $i\in [t^2]$, let $\phi(e_i)=i$.
Suppose, for contradiction, that  $F$ is a copy of $K_{t,t}$ in $\fG$ with vertex classes $U$ and $V$ with edges $\{f_1,\ldots,f_{t^2}\}$ such that, for each $i\in [t^2]$, the edge $f_i$ is the copy of $e_i$ and $f_i\in E(G_{\phi(i)})$. Then, by averaging, there are some $j,j'\in [k+1]$ with $|U\cap A_j|\geq t/(k+1)$ and $|V\cap A_{j'}|\geq t/(k+1)$. For some $i\in [k+1]$, we have $jj'\in M_i$.
Now, let $U',V'\subset V(K_{t,t})$ be the sets copied to $U\cap A_j$ and $V\cap A_{j'}$ respectively.
Then, by \textbf{R}, there is some $i'\in [t^2]$ for which the edge $e_{i'}$ lies between $U'$ and $V'$ and has $\psi(e_{i'})=i$. Therefore, as it is a copy of $e_{i'}$, the edge $f_{i'}$ lies between $U\cap A_j$ and $V\cap A_{j'}$ and appears in $G_{\phi(i')}=G_{i'}$. However, as $\psi(e_{i'})=i$, it must hold that $f_{i'}\in A_{j}\times A_{j'}$ and $jj'\in M_i$, a contradiction.
\end{proof}

\section*{Acknowledgements}
We thank the anonymous referees for detailed feedback that improved the presentation of the paper. The second author thanks Tibor Szabó for suggesting to investigate a version of Theorem~\ref{thm:fullyrainbowfactors}.


\begin{aicauthors}
\begin{authorinfo}[richard]
  Richard Montgomery\\
  University of Warwick\\
  Coventry, CV4 7AL, UK\\
  {\tt richard.montgomery@warwick.ac.uk}
\end{authorinfo}
\begin{authorinfo}[alp]
  Alp M\"uyesser\\
  University College London\\
  London, WC1E 6BT, UK\\
  \texttt{alp.muyesser.21@ucl.ac.uk}
\end{authorinfo}
\begin{authorinfo}[yani]
  Yani Pehova\\
  University of Warwick\\
  Coventry, CV4 7AL, UK\\
  {\tt yani.pehova@gmail.com}
\end{authorinfo}
\end{aicauthors}

\end{document}